\newtheorem{Th}{Theorem}[section]
\newtheorem{Rem}[Th]{Remark}
\newtheorem{Ex}[Th]{Example}
\newtheorem{Lemma}[Th]{Lemma}
\newtheorem{Def}[Th]{Definition}
\newtheorem{Prop}[Th]{Proposition}
\newtheorem{Cor}[Th]{Corollary}
\renewcommand{\section}%
   {\setcounter{equation}{0}\@startsection {section}{1}{\z@}{-3.5ex plus -1ex
  minus -.2ex}{2.3ex plus .2ex}{\Large\bf}}
\def\supp{\mathop{\rm supp}\nolimits}
\def\WF{\mathop{\rm WF}\nolimits}
\def\esssup{\mathop{\rm ess\,sup}}
\def\conesupp{\mathop{\rm cone\,supp}}
\def\ds{\displaystyle}
\def\R{\mathbb R}
\def\C{\mathbb C}
\def\N{\mathbb N}
\def\Z{\mathbb Z}
\newcommand{\D}{\mathcal{D}}
\newcommand{\F}{\mathcal{F}}
\newcommand{\Sch}{\mathcal{S}}
\newcommand{\afrac}[2]{\genfrac{}{}{0pt}{1}{#1}{#2}}
\newcommand{\beqsn}{\arraycolsep1.5pt\begin{eqnarray*}}
\newcommand{\eeqsn}{\end{eqnarray*}\arraycolsep5pt}
\newcommand{\beqs}{\arraycolsep1.5pt\begin{eqnarray}}
\newcommand{\eeqs}{\end{eqnarray}\arraycolsep5pt}
\title{The Gabor wave front set in spaces of ultradifferentiable functions}
\author[Boiti]{Chiara Boiti}
\address{
Dipartimento di Matematica e Informatica \\Universit\`a di Ferrara\\
Via Ma\-chia\-vel\-li n.~30\\
I-44121 Ferrara\\
Italy}
\email{chiara.boiti@unife.it}
\author[Jornet]{David Jornet}
\address{
Instituto Universitario de Matem\'atica Pura y Aplicada IUMPA\\
Universitat Po\-li\-t\`ecni\-ca de Val\`encia\\
Camino de Vera, s/n\\
E-46071 Valencia\\
Spain}
\email{djornet@mat.upv.es}
\author[Oliaro]{Alessandro Oliaro}
\address{Dipartimento di Matematica\\ Universit\`a di Torino\\
  Via Carlo Alberto n. 10\\ I-10123 Torino\\ Italy}
  \email{alessandro.oliaro@unito.it}
\begin{document}

\keywords{Gabor wave front set, weighted Schwartz classes, short-time Fourier transform, Gabor frames}
\subjclass[2010]{Primary 35A18; Secondary 46F05, 42C15, 35S05}

\begin{abstract}
Given a non-quasianalytic subadditive weight function $\omega$ we consider the weighted
Schwartz space $\Sch_\omega$ and  the short-time Fourier transform on
$\Sch_\omega$, $\Sch'_\omega$ and on the related modulation spaces with
exponential weights.
In this setting we define the $\omega$-wave front set $\WF'_\omega(u)$ and the
Gabor $\omega$-wave front set $\WF^G_\omega(u)$ of $u\in\Sch'_{\omega}$, and we prove that they coincide.
Finally we look at applications of this wave front set for operators of differential and pseudo-differential type.
\end{abstract}

\maketitle
\markboth{\sc The Gabor wave front set in spaces of ultradifferentiable functions}
{\sc C.~Boiti, D.~Jornet and A.~Oliaro}

\section{Introduction}
The wave front set is a basic concept in the theory of linear partial differential operators. It deals with the analysis of singularities of a function (or distribution), and in the classical Schwartz distributions theory it was originally defined in \cite{H}. The idea is that, if a distribution $u\in\D^\prime(\R^d)$ coincides with a $C^\infty$ function in a neighborhood of a certain point $x_0\in\R^d$, then there exists a cut-off function $\varphi\in \D(\R^d)$ (i.e., $\varphi\in C^\infty(\R^d)$ with compact support) such that $\varphi u\in \D(\R^d)$, and consequently $\widehat{\varphi u}$ is a rapidly decreasing function. If $u$ is not $C^\infty$ at $x_0$, then $\widehat{\varphi u}$ does not decrease rapidly at least in some directions, and these directions are responsible for the absence of regularity of $u$ at $x_0$. The wave front set collects all the points $(x_0,\xi_0)$, with $\xi_0\neq 0$, where, roughly speaking, the distribution $u$ is not $C^\infty$ at $x_0$ due (on the Fourier transform side) to the absence of rapid decreasing in the direction $\xi_0$. The wave front set is then a subset of $\R^d\times(\R^d\setminus\{0\})$, is computed for a distribution $u$ and has to do with the analysis of the points where $u$ is not smooth in connection with the directions where absence of smoothness is shown by the Fourier transform of $u$. In the  case described above, the distribution space is $\D^\prime$, and `smooth' means $C^\infty$. 

Some very natural questions arise. First, the spaces $\D^\prime$ and $C^\infty$ could be replaced by other spaces of distributions connected to other concepts of `smoothness'; in this frame we refer for example to \cite{R}, \cite{BJJ}, \cite{AJO1}, \cite{AJO2}, where wave front sets connected with Gevrey and ultradifferentiable type regularity are considered. Moreover, in the case of $\D^\prime$ and $C^\infty$ the smoothness is intended in a local sense, but also some kind of global regularity could be considered, see for example \cite{H2}, \cite{CS}, \cite{RW}, \cite{N}, \cite{SW2}.

The wave front set, moreover, has very important applications in the study of propagation of singularities for partial differential (or more generally for pseudodifferential) operators. In this frame, different classes of pseudodifferential operators lead to corresponding variants of the wave front set, adapted to the class under consideration. Among the vast literature in this field we refer for example to \cite{H3} for the $C^\infty$ case, to \cite{R} for the Gevrey case, and to \cite{SW1}, \cite{CW} for the case of global wave front set defined in the spirit of the present paper.


Time-frequency analysis is a field of research that in the last decades has had a very big growth, with the development of many new techniques. One of the basic ideas of time-frequency analysis is the simultaneous analysis of a function (or distribution) with respect to variables and covariables, in order to quantify the energy of a signal at some time $x_0$ and some frequency $\xi_0$. Since the wave front set has to do with a simultaneous analysis of points (variables) and directions (covariables), it is very natural to try to apply methods of time-frequency analysis in connection with the wave front set. The work \cite{RW} is a very interesting contribution in this direction. 

The present paper deals with a global wave front set, in the spirit of \cite{H2}, treated with techniques from time-frequency analysis, following ideas from \cite{RW}. In particular, we study the case of ultradifferentiable functions in the sense of \cite{B}, \cite{BMT}, focusing on the space $\Sch_\omega(\R^d)$, defined as the space of functions $f\in\Sch(\R^d)$ such that for every $\lambda>0$ and $\alpha\in\N^d_0$ we have
$$
\sup_{x\in\R^d}e^{\lambda\omega(x)} \vert \partial^\alpha f(x)\vert < +\infty \ ,\qquad \sup_{\xi\in\R^d}e^{\lambda\omega(\xi)}\vert \partial^\alpha\hat{f}(\xi)\vert < +\infty ,
$$
where $\omega$ is a non-quasianalytic subadditive weight, cf.\ Definition \ref{defweight}. The spaces $\Sch_\omega(\R^d)$, together with the corresponding (ultra)distribution space $\Sch^\prime_\omega(\R^d)$, play a role similar to $\Sch(\R^d)$ and $\Sch^\prime(\R^d)$ in the classical Schwartz frame. We study in this paper a global wave front set adapted to $\Sch_\omega$ regularity, giving two different definitions; one is based on Gabor transform and the other is related to Gabor frames. We show that these definitions are equivalent. Moreover, we give applications of this global wave front set to pseudodifferential operators, to partial differential operators with polynomial coefficients, to localization operators, and we analyze some examples. The techniques are related with time-frequency analysis, in particular to Gabor transform, Gabor frames and modulation spaces. 

The paper is organized as follows. In Section \ref{sec2} we present basic definitions and consider time-frequency analysis on ultradifferentiable spaces;  we revise some known properties and prove other results that are needed in the paper, but that we could not find in the literature. In Section~\ref{sec3} we  give the two definitions of wave front set and prove that they are equivalent. Moreover we show that the global wave front set of a distribution $u\in\Sch^\prime_\omega(\R^d)$ is empty if and only if $u\in\Sch_\omega(\R^d)$, and that the global wave front set is not affected by translations and modulations, that are the basic operators of time-frequency analysis. In Section \ref{sec4} we present some applications about the action of operators of differential and pseudodifferential type on the wave front set, and finally in Section \ref{sec5} we analyze some examples.
\section{Preliminaries and the short-time Fourier transform in $\Sch_{\omega}(\R^{d})$}
\label{sec2}
Given a function $f\in L^1(\R^d)$, the Fourier transform of $f$ is defined as
\beqsn
\F(f)=
\hat{f}(\xi)=\int_{\R^d} e^{-i\langle x,\xi\rangle} f(x)\,dx,
\eeqsn
with standard extensions to more general spaces of functions and distributions.

\begin{Def}
\label{defweight}
A {\em non-quasianalytic subadditive weight function} is a continuous
increasing function
$\omega: \ [0,+\infty)\to[0,+\infty)$ satisfying the following properties:
\begin{itemize}
\item[$(\alpha)$]
  $\omega(t_1+t_2)\leq\omega(t_1)+\omega(t_2)\quad\forall t_1,t_2\geq0$;
\item[$(\beta)$]
  $\ds\int_1^{+\infty}\frac{\omega(t)}{t^2}dt<+\infty$;
\item[$(\gamma)$]
  $\log t=o(\omega(t))$ when $t\to+\infty$;
\item[$(\delta)$]
  $\varphi_\omega(t):=\omega(e^t)$ is convex.
\end{itemize}
We then define $\omega(\zeta):=\omega(|\zeta|)$ for $\zeta\in\C^d$.
\end{Def}

We denote by $\varphi^*_\omega$ the Young conjugate of $\varphi_\omega$, defined by
\beqsn
\varphi^*_\omega(s):=\sup_{t\geq0}\{ts-\varphi_\omega(t)\}
\eeqsn
and recall that $\varphi^*_\omega$ is convex and increasing,
$\varphi^{**}_\omega=\varphi_\omega$ and $\varphi^*_\omega(s)/s$ is increasing
(up to assume, without any loss of generality, that $\omega|_{[0,1]}\equiv0$).

\begin{Def}
  \label{defSomega}
  We define $\Sch_\omega(\R^d)$ as the set of all $u\in\Sch(\R^d)$ such that
  \begin{itemize}
  \item[(i)]
    $\ds\forall\lambda>0,\alpha\in\N_0^d:\ \sup_{\R^d}e^{\lambda\omega(x)}
    |D^\alpha u(x)|<+\infty$,
  \item[(ii)]
    $\ds\forall\lambda>0,\alpha\in\N_0^d:\ \sup_{\R^d}e^{\lambda\omega(\xi)}
    |D^\alpha \widehat{u}(\xi)|<+\infty$,
  \end{itemize}
  where $\N_0:=\N\cup\{0\}$ and $D^\alpha = (-i)^{\vert\alpha\vert} \partial^\alpha$.

  As usual, the corresponding dual space is denoted by $\Sch'_\omega(\R^d)$
  and is the set of all linear and continuous functionals
  $u:\,\Sch_\omega(\R^d)\to\C$. An element of $\Sch'_\omega(\R^d)$ is called an
  {\em $\omega$-tempered distribution}.
\end{Def}

    In \cite[Thm. 4.8]{BJO} we provided the space $\Sch_\omega(\R^d)$ with different equivalent systems of seminorms. For example, for $u\in\Sch_\omega(\R^d)$, the family of seminorms
    \beqs\label{seminorms}
    p_{\lambda,\mu}(u):=\sup_{\alpha,\beta\in\N_0^d}\sup_{x\in\R^d}|x^\beta D^\alpha u(x)|
    e^{-\lambda\varphi^*_\omega\left(\frac{|\alpha|}{\lambda}\right)-
      \mu\varphi^*_\omega\left(\frac{|\beta|}{\mu}\right)},
    \eeqs
    for $\lambda,\mu>0$. On the other hand, it is not difficult to see (using, for instance, \cite[Lemma 4.7(ii)]{BJO}) that the family of seminorms
\begin{equation}
\label{new-seminorms}
q_{\lambda,\mu}(u):=\sup_{\alpha\in\N_0^d}\sup_{x\in\R^{d}} |D^\alpha u(x)|
    e^{-\lambda\varphi^*_\omega\left(\frac{|\alpha|}{\lambda}\right)+
      \mu\omega(x)},\quad \lambda,\mu>0,
\end{equation}
defines another equivalent system of seminorms for $\Sch_{\omega}(\R^{d})$.

    We recall that $\Sch_\omega(\R^d)\subseteq\Sch(\R^d)$ and for their correspondent dual spaces we have the inclusion
    $\Sch'(\R^d)
    \subseteq\Sch'_\omega(\R^d)$.

Let us denote by $T_x$, $M_\xi$ and $\Pi(z)$, respectively, the
{\em translation}, the {\em modulation} and the {\em phase-space shift}
operators, defined by
\beqsn
T_xf(y)=f(y-x),\quad M_\xi f(y)=e^{i\langle y,\xi\rangle}f(y),\quad
\Pi(z)f(y)=M_\xi T_xf(y)=e^{i\langle y,\xi\rangle}f(y-x),
\eeqsn
for $x,y,\xi\in\R^d$ and $z=(x,\xi)$.

\begin{Def}
  \label{defSTFT}
  For a {\em window function} $\varphi\in\Sch_\omega(\R^d)\setminus\{0\}$, the
  {\em short-time Fourier transform} (briefly STFT) of $f\in\Sch'_\omega(\R^d)$
  is defined, for $z=(x,\xi)\in\R^{2d}$, by:
  \beqs
  \label{10}
  V_\varphi f(z):=&&\langle f,\Pi(z)\varphi\rangle\\
  \label{13}
  =&&\int_{\R^d}f(y)\overline{\varphi(y-x)}e^{-i\langle y,\xi\rangle}dy,
  \eeqs
  where the bracket $\langle \cdot,\cdot\rangle$ in \eqref{10}
and the integral in \eqref{13}
  denote the conjugate linear action of $\Sch'_\omega$ on $\Sch_\omega$,
  consistent with the inner product $\langle\cdot,\cdot\rangle_{L^2}$.
\end{Def}

By \cite[Lemma 1.1]{GZ}, for $f,\varphi,\psi\in\Sch_\omega(\R^d)$ we have the
following {\em inversion formula}:
\beqs
\label{12}
\langle\psi,\varphi\rangle f(y)=\frac{1}{(2\pi)^d}
\int_{\R^{2d}}V_\varphi f(z)(\Pi(z)\psi)(y)dz.
\eeqs
In particular, for $\psi=\varphi\in\Sch_\omega(\R^d)\setminus\{0\}$:
\beqs
\label{1}
f(y)=\frac{1}{(2\pi)^d\|\varphi\|^2_{L^2}}\int_{\R^{2d}}V_\varphi f(z)
(\Pi(z)\varphi)(y)dz.
\eeqs

We recall, from \cite{GZ}, the following results:

\begin{Th}
  \label{th24GZ}
  Let $\varphi\in\Sch_\omega(\R^d)\setminus\{0\}$ and $f\in\Sch'_\omega(\R^d)$.
  Then $V_\varphi f$ is continuous and there are constants $c,\lambda>0$
  such that
  \beqs
\label{Ad7}
  |V_\varphi f(z)|\leq c e^{\lambda\omega(z)}\qquad\forall z\in\R^{2d}.
  \eeqs
\end{Th}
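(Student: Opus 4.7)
The plan is to bootstrap the pairing formula $V_\varphi f(z) = \langle f, \Pi(z)\varphi\rangle$: view the STFT as the continuous functional $f$ evaluated on the moving test function $\Pi(z)\varphi$, and convert the continuity estimate on $f$ into quantitative bounds on $V_\varphi f(z)$.

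For continuity, I first argue that $z \mapsto \Pi(z)\varphi$ is continuous from $\R^{2d}$ to $\Sch_\omega(\R^d)$. Using the seminorms $q_{\lambda,\mu}$ from \eqref{new-seminorms}, this reduces to showing $q_{\lambda,\mu}(\Pi(z_n)\varphi - \Pi(z)\varphi) \to 0$ whenever $z_n \to z$, which follows from uniform continuity of each derivative $D^\alpha \varphi$ on compact sets together with the rapid $\omega$-decay of $\varphi$ and its derivatives, plus subadditivity of $\omega$ to move the translation out. Composition with the continuous linear form $f$ then yields continuity of $V_\varphi f$ on $\R^{2d}$.

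For the exponential bound, continuity of $f \in \Sch'_\omega$ provides constants $C, \lambda_0, \mu_0 > 0$ with $|\langle f, g\rangle| \leq C\, q_{\lambda_0, \mu_0}(g)$ for every $g \in \Sch_\omega(\R^d)$. Applied to $g = \Pi(z)\varphi = M_\xi T_x \varphi$ and using Leibniz's rule,
\begin{equation*}
D^\alpha(M_\xi T_x \varphi)(y) = \sum_{\gamma \leq \alpha} \binom{\alpha}{\gamma} \xi^\gamma e^{i\langle y,\xi\rangle}(D^{\alpha-\gamma}\varphi)(y-x).
\end{equation*}
Subadditivity $\omega(y) \leq \omega(y-x) + \omega(x)$ peels off the translation, producing a factor $e^{\mu_0 \omega(x)}$; Young's inequality for $\varphi^*_\omega$ yields $|\xi|^{|\gamma|} \leq e^{\lambda \varphi^*_\omega(|\gamma|/\lambda) + \lambda \omega(\xi)}$; and the seminorm estimate for $\varphi$ itself controls $|D^{\alpha-\gamma}\varphi(y-x)|\,e^{\mu_0 \omega(y-x)}$ by a constant times $e^{\lambda \varphi^*_\omega(|\alpha-\gamma|/\lambda)}$.

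The combinatorial core is then to bound the resulting sum $\sum_{\gamma \leq \alpha} \binom{\alpha}{\gamma} e^{\lambda[\varphi^*_\omega(|\gamma|/\lambda) + \varphi^*_\omega(|\alpha-\gamma|/\lambda)]}$ by $C_1 e^{\lambda_0 \varphi^*_\omega(|\alpha|/\lambda_0)}$ for a suitable $\lambda_0 \geq \lambda$. This uses the superadditivity $\varphi^*_\omega(a) + \varphi^*_\omega(b) \leq \varphi^*_\omega(a+b)$ (an immediate consequence of $\varphi^*_\omega(t)/t$ being increasing with $\varphi^*_\omega(0) = 0$) to collapse the two $\varphi^*_\omega$-terms, yielding a bound of the form $2^{|\alpha|} e^{\lambda \varphi^*_\omega(|\alpha|/\lambda)}$, and then absorbing the $2^{|\alpha|}$ by enlarging $\lambda$ to some $\lambda_0$ via the standard technical lemma available because $\varphi^*_\omega(t)/t \to +\infty$. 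Taking the supremum in $(\alpha, y)$ inside the definition of $q_{\lambda_0, \mu_0}$ and using $\omega(x) + \omega(\xi) \leq 2\omega(z)$ (since $|x|, |\xi| \leq |z|$ and $\omega$ is increasing) produces the desired estimate $|V_\varphi f(z)| \leq c\, e^{\lambda \omega(z)}$ for suitable $c, \lambda > 0$.

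The principal technical hurdle is the bookkeeping of the $\varphi^*_\omega$ arithmetic in the Leibniz expansion, specifically the parameter shift needed to absorb combinatorial factors into a single $\varphi^*_\omega$-exponential. This is a recurring theme in the Beurling/BMT ultradifferentiable setting, and the requisite adjustments are codified in the technical machinery of \cite{BJO}.
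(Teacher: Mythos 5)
The paper itself gives no proof of this theorem: it is imported from \cite[Thm.~2.4]{GZ}. Your reconstruction is the expected argument and matches both the proof in \cite{GZ} and the computations the paper runs elsewhere with the same toolkit (compare the proof of Proposition~\ref{cor1126G}): bound $|V_\varphi f(z)|=|\langle f,\Pi(z)\varphi\rangle|\leq C\,q_{\lambda_0,\mu_0}(\Pi(z)\varphi)$ using the directed system \eqref{new-seminorms}, expand $D^\alpha(M_\xi T_x\varphi)$ by Leibniz, apply \eqref{3} to $|\xi|^{|\gamma|}$, \eqref{6} to the derivatives of $\varphi$, subadditivity of $\omega$ to peel off the translation, superadditivity of $\varphi^*_\omega$ to merge the two conjugate-function terms, and \eqref{4} to absorb $2^{|\alpha|}$. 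In substance the proof is correct; I flag two points of bookkeeping.

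First, the absorption step is stated with the parameter shift in the wrong direction: since $\lambda\mapsto\lambda\varphi^*_\omega(t/\lambda)$ is \emph{decreasing}, an estimate $2^{|\alpha|}e^{\lambda\varphi^*_\omega(|\alpha|/\lambda)}\leq C_1 e^{\lambda_0\varphi^*_\omega(|\alpha|/\lambda_0)}$ forces $\lambda_0\leq\lambda$ (it is false for any $\lambda_0\geq\lambda$, as the right-hand exponent is then the smaller one). The correct order of quantifiers is the one your own setup suggests: fix $(\lambda_0,\mu_0)$ from the continuity of $f$ \emph{first}, then run the Leibniz estimates with the free parameter $\lambda:=3\lambda_0$, so that \eqref{4} (i.e.\ \cite[Prop.~2.1(e)]{BJ-Kotake}) gives $2^{|\alpha|}e^{3\lambda_0\varphi^*_\omega\left(\frac{|\alpha|}{3\lambda_0}\right)}\leq e^{3\lambda_0}e^{\lambda_0\varphi^*_\omega\left(\frac{|\alpha|}{\lambda_0}\right)}$, which cancels exactly against the weight $e^{-\lambda_0\varphi^*_\omega\left(\frac{|\alpha|}{\lambda_0}\right)}$ in $q_{\lambda_0,\mu_0}$ and yields $q_{\lambda_0,\mu_0}(\Pi(z)\varphi)\leq C e^{(3\lambda_0+\mu_0)\omega(z)}$, hence \eqref{Ad7}. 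Second, in the continuity step the supremum over \emph{all} $\alpha$ in $q_{\lambda_0,\mu_0}(\Pi(z_n)\varphi-\Pi(z)\varphi)$ is not controlled by termwise uniform continuity alone; the tail needs uniformity in $\alpha$. The same device repairs this: running the estimate with $\lambda=9\lambda_0$ and applying \eqref{4} twice produces an extra factor $2^{-|\alpha|}$ against the weight $e^{-\lambda_0\varphi^*_\omega\left(\frac{|\alpha|}{\lambda_0}\right)}$, so the supremum over $|\alpha|>N$ is uniformly small in $n$ and only finitely many indices remain. Note also that for continuity of the scalar function $V_\varphi f$ you only need this for the single pair $(\lambda_0,\mu_0)$, not full continuity of $z\mapsto\Pi(z)\varphi$ into $\Sch_\omega(\R^d)$, which slightly shortens your first paragraph.
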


\begin{Prop}
  \label{prop26GZ}
  Let $\varphi\in\Sch_\omega(\R^d)\setminus\{0\}$ and assume that
  $F:\ \R^{2d}\to\C$ is a measurable function that
  satisfies that for all $\lambda>0$ there is a constant
  $C_\lambda>0$ such that
  \beqsn
  |F(z)|\leq C_\lambda e^{-\lambda\omega(z)}\qquad\forall z\in\R^{2d}.
  \eeqsn
  Then
  \beqsn
  f(y):=\int_{\R^{2d}}F(z)(\Pi(z)\varphi)(y)dz
  \eeqsn
  defines a function $f\in\Sch_\omega(\R^d)$.
  \end{Prop}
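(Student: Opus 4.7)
The plan is to verify conditions (i) and (ii) of Definition \ref{defSomega} for $f$ directly, by differentiating under the integral sign and exploiting the interplay between the super-exponential decay of $F$ and the $\Sch_\omega$-decay of $\varphi$ and $\hat\varphi$. First, since for every $\lambda>0$ we have $|F(z)|\leq C_\lambda e^{-\lambda\omega(z)}$, and property $(\gamma)$ of $\omega$ (i.e.\ $\log|z|=o(\omega(z))$) ensures that $e^{-\lambda\omega(z)}$ is integrable on $\R^{2d}$ for $\lambda$ sufficiently large, the integral defining $f$ converges absolutely. Leibniz applied to $(\Pi(z)\varphi)(y)=e^{i\langle y,\xi\rangle}\varphi(y-x)$ gives
\begin{equation*}
D^\alpha f(y)=\sum_{\gamma\leq\alpha}\binom{\alpha}{\gamma}\int_{\R^{2d}}F(z)\,\xi^\gamma\, e^{i\langle y,\xi\rangle}\,(D^{\alpha-\gamma}\varphi)(y-x)\,dz,
\end{equation*}
where differentiating under the integral is legal thanks to the uniform-in-$y$ domination of the integrand by a $z$-integrable majorant (the polynomial losses that appear during differentiation are absorbed by choosing $\lambda$ large).

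Next, I would fix $\mu>0$ and $\alpha\in\N_0^d$ and estimate as follows. Since $\varphi\in\Sch_\omega(\R^d)$, there is $C_{\alpha,\mu}>0$ with $|(D^{\alpha-\gamma}\varphi)(y-x)|\leq C_{\alpha,\mu}e^{-\mu\omega(y-x)}$. The subadditivity of $\omega$ gives $\omega(y)\leq\omega(y-x)+\omega(x)\leq\omega(y-x)+\omega(z)$, hence $e^{-\mu\omega(y-x)}\leq e^{-\mu\omega(y)}e^{\mu\omega(z)}$. Property $(\gamma)$ also yields $|\xi^\gamma|\leq|\xi|^{|\alpha|}\leq C_\alpha e^{\omega(\xi)}\leq C_\alpha e^{\omega(z)}$. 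Choosing $\lambda$ so large that $(\lambda-\mu-1)\omega(z)$ dominates a power of $|z|$ making $e^{-(\lambda-\mu-1)\omega(z)}$ integrable on $\R^{2d}$ (possible by $(\gamma)$), I conclude
\begin{equation*}
|D^\alpha f(y)|\,e^{\mu\omega(y)}\leq C_{\alpha,\mu}\int_{\R^{2d}}e^{(-\lambda+\mu+1)\omega(z)}\,dz<+\infty,
\end{equation*}
which is exactly condition (i).

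For condition (ii), the Fourier covariance of $\Pi(z)$ gives $\F(\Pi(z)\varphi)(\eta)=e^{-i\langle x,\eta-\xi\rangle}\hat\varphi(\eta-\xi)$, so
\begin{equation*}
\hat f(\eta)=\int_{\R^{2d}}F(z)\,e^{-i\langle x,\eta-\xi\rangle}\,\hat\varphi(\eta-\xi)\,dz.
\end{equation*}
Since $\F$ is an isomorphism on $\Sch_\omega(\R^d)$, also $\hat\varphi\in\Sch_\omega(\R^d)$, and Leibniz in $\eta$ now produces factors $(-x)^\gamma$ (controlled by $e^{\omega(z)}$) and shifted derivatives $(D^{\alpha-\gamma}\hat\varphi)(\eta-\xi)$. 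Using $\omega(\eta)\leq\omega(\xi)+\omega(\eta-\xi)\leq\omega(z)+\omega(\eta-\xi)$, the same bookkeeping as for $f$ gives $|D^\alpha\hat f(\eta)|\,e^{\mu\omega(\eta)}<+\infty$.

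The main obstacle is really just notational: one must track the three-way interaction between (a) the polynomial factor $\xi^\gamma$ (or $x^\gamma$), (b) the $\omega$-exponential decay of $D^{\alpha-\gamma}\varphi$ (resp.\ $D^{\alpha-\gamma}\hat\varphi$) evaluated at the shifted argument, and (c) the freely-chosen super-exponential decay of $F$, combining them via subadditivity of $\omega$. Once one observes that every polynomial and $e^{C\omega(z)}$ loss can be swallowed by increasing $\lambda$ in the bound for $F$, the estimates become routine and both conditions of Definition \ref{defSomega} follow.
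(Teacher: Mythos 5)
Your proof is correct, and since the paper itself recalls this proposition from \cite{GZ} without reproducing a proof, your direct verification of conditions (i) and (ii) of Definition~\ref{defSomega} (differentiation under the integral sign, then absorbing the polynomial factor $\xi^\gamma$ resp.\ $x^\gamma$ and the shifted decay of $D^{\alpha-\gamma}\varphi$ resp.\ $D^{\alpha-\gamma}\hat\varphi$ into the freely large $\lambda$ via subadditivity and condition $(\gamma)$) is essentially the standard argument behind the cited result; note that it cannot be shortcut through Theorem~\ref{th27GZ}, whose implication (ii)$\Rightarrow$(i) relies on this very proposition via the inversion formula, so the direct route is the right one. Two small repairs: $|\xi^\gamma|\leq|\xi|^{|\alpha|}$ fails for $|\xi|<1$, so use $|\xi^\gamma|\leq\max(1,|\xi|)^{|\alpha|}\leq C_\alpha e^{\omega(\xi)}$, and the identity $\hat f(\eta)=\int_{\R^{2d}}F(z)\,\F(\Pi(z)\varphi)(\eta)\,dz$ should be justified explicitly by Fubini, which is legitimate because $F\in L^1(\R^{2d})$ (by $(\gamma)$ and the decay hypothesis) and $\varphi\in L^1(\R^d)$.
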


\begin{Th}
  \label{th27GZ}
  Let $\varphi\in\Sch_\omega(\R^d)\setminus\{0\}$. Then, for
  $f\in\Sch'_\omega(\R^d)$, the following are equivalent:
  \begin{itemize}
  \item[(i)]
    $f\in\Sch_\omega(\R^d)$;
  \item[(ii)]
    for all $\lambda>0$ there exists $C_\lambda>0$ such that
    \beqsn
    |V_\varphi f(z)|\leq C_\lambda e^{-\lambda\omega(z)}\qquad
    \forall z\in\R^{2d};
    \eeqsn
  \item[(iii)]
    $V_\varphi f\in\Sch_\omega(\R^{2d})$.
  \end{itemize}
\end{Th}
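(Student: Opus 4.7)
My plan is to establish the cycle $(\text{iii}) \Rightarrow (\text{ii}) \Rightarrow (\text{i}) \Rightarrow (\text{iii})$.

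The implication $(\text{iii}) \Rightarrow (\text{ii})$ is the specialization of the seminorms \eqref{new-seminorms} for $\Sch_\omega(\R^{2d})$ to derivative order zero. For $(\text{ii}) \Rightarrow (\text{i})$, I would combine the inversion formula \eqref{1} with Proposition \ref{prop26GZ}. Setting
\beqsn
F(z) := \frac{1}{(2\pi)^d\|\varphi\|^2_{L^2}} V_\varphi f(z),
\eeqsn
hypothesis (ii) is exactly the decay assumption of Proposition \ref{prop26GZ}, so
\beqsn
g(y) := \int_{\R^{2d}} F(z)(\Pi(z)\varphi)(y)\, dz
\eeqsn
defines an element of $\Sch_\omega(\R^d)$. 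It remains to identify $g$ with $f$ in $\Sch'_\omega(\R^d)$: I would test against an arbitrary $\psi \in \Sch_\omega(\R^d)$ and apply Fubini, justified by Theorem \ref{th24GZ} and the fact that $V_\varphi\psi\in\Sch(\R^{2d})$ (which follows directly from \eqref{13} since $\psi,\varphi\in\Sch(\R^d)$), to obtain $\langle g,\psi\rangle = \frac{1}{(2\pi)^d\|\varphi\|^2_{L^2}}\int V_\varphi f\cdot\overline{V_\varphi\psi}\,dz$, which equals $\langle f,\psi\rangle$ via the Moyal-type orthogonality identity obtained by pairing \eqref{12} with $\psi$.

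The main content is $(\text{i}) \Rightarrow (\text{iii})$. My approach is to write $V_\varphi f(x,\xi) = \widehat{h_x}(\xi)$, where $h_x(y) := f(y)\overline{\varphi(y-x)}$ is the pointwise product of two $\Sch_\omega(\R^d)$-functions and hence lies itself in $\Sch_\omega(\R^d)$. Since $\Sch_\omega(\R^d)$ is invariant under the Fourier transform (the two conditions of Definition \ref{defSomega} being symmetric), every derivative $D^\beta_\xi V_\varphi f(x,\xi)$ decays faster than any $e^{-\mu\omega(\xi)}$. To obtain uniform control in $x$ I would bound the $\Sch_\omega$-seminorms of $h_x$ by $Ce^{-\mu\omega(x)}$ for any prescribed $\mu>0$: this follows from Leibniz's rule applied to $D^\sigma h_x$, combined with the $\omega$-decay of $f$ and of $D^\alpha\varphi$ furnished by \eqref{new-seminorms}, and the subadditivity estimate $\omega(x)\leq \omega(x-y)+\omega(y)$, which rewrites $e^{-\mu'\omega(y-x)}\leq e^{-\mu'\omega(x)}e^{\mu'\omega(y)}$; the spurious factor $e^{\mu'\omega(y)}$ is absorbed by taking a sufficiently large decay rate for $f$. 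Estimates on $D_x^\alpha D_\xi^\beta V_\varphi f$ then come from the same argument after differentiating \eqref{13} under the integral, which only introduces extra derivatives of $\varphi$ and polynomial factors in $y$. Since $\omega(x,\xi)\leq \omega(x)+\omega(\xi)$, separate decay in $x$ and in $\xi$ yields the joint decay required by \eqref{new-seminorms} on $\R^{2d}$.

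The main obstacle is combinatorial: one must track the constants $C^{|\sigma|}$ coming from Leibniz summations and match them against the Young-conjugate factors $e^{-\lambda\varphi^*_\omega(|\gamma|/\lambda)}$ in \eqref{new-seminorms}. I would handle this with the convexity properties of $\varphi^*_\omega$ recalled after Definition \ref{defweight}, notably the superadditivity $\varphi^*_\omega(a)+\varphi^*_\omega(b)\leq\varphi^*_\omega(a+b)$ (valid because $\varphi^*_\omega(0)=0$ and $\varphi^*_\omega$ is convex) to recombine derivative orders split between $f$ and $\varphi$, together with the absorption $C^{|\gamma|}\leq e^{\lambda\varphi^*_\omega(|\gamma|/\lambda)}$ at the cost of enlarging $\lambda$. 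This is routine bookkeeping in the Beurling--Björck framework rather than a genuine difficulty.
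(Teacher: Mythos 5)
The first thing to note is that the paper contains no proof of Theorem \ref{th27GZ}: it is quoted from \cite{GZ} as a known result, so your argument can only be measured against the paper's surrounding toolkit rather than an internal proof. Measured that way, your plan is correct, and it passes the one test that really matters here: it is not circular. You never invoke Proposition \ref{cor1126G} or Proposition \ref{cor1127G} (whose proofs in the paper \emph{use} Theorem \ref{th27GZ}), relying only on \eqref{12}, Theorem \ref{th24GZ} and Proposition \ref{prop26GZ}, which are imported from \cite{GZ} independently of the statement being proved. The step (iii)$\Rightarrow$(ii) via \eqref{new-seminorms} with $\alpha=0$ is immediate (note $\varphi^*_\omega(0)=0$). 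In (ii)$\Rightarrow$(i), the scalar Fubini argument you give correctly identifies $\langle g,\psi\rangle$ with the Moyal-type integral, but the other half — that $\langle f,\psi\rangle=\frac{1}{(2\pi)^d\|\varphi\|^2_{L^2}}\int V_\varphi f\,\overline{V_\varphi\psi}\,dz$ for the \emph{distribution} $f$ — requires pulling $f$ inside the $\Sch_\omega$-valued integral \eqref{1} applied to $\psi$; this is exactly the point where the paper, in the proof of Proposition \ref{cor1127G}(b), appeals to \cite[pg 43]{G} for vector-valued integrals, and the same justification closes your step (your cited ``Theorem \ref{th24GZ} plus $V_\varphi\psi\in\Sch$'' only covers the $g$-side). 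For (i)$\Rightarrow$(iii), your scheme — $V_\varphi f(x,\cdot)=\widehat{h_x}$ with $h_x=f\,\overline{T_x\varphi}\in\Sch_\omega(\R^d)$, uniform-in-$x$ seminorm bounds via subadditivity with the spurious $e^{\mu'\omega(y)}$ absorbed by the faster decay of $f$, superadditivity of $\varphi^*_\omega$ to recombine Leibniz-split derivative orders, absorption of $C^{|\gamma|}$ at the cost of enlarging $\lambda$ as in \eqref{4}, and $\omega(x,\xi)\le\omega(x)+\omega(\xi)$ to glue separate decays — is precisely the computational pattern the paper itself executes in the proofs of Proposition \ref{cor1126G} and Theorem \ref{th35RW} (with the $\xi$-decay recovered by taking an infimum over the integration-by-parts order as in \eqref{47ii}), so the bookkeeping you defer is genuinely routine in this framework. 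What your route buys, compared with simply citing \cite{GZ} as the paper does, is a self-contained proof using only the facts the paper already quotes; the two details flagged above are the only places where the write-up would need the same care the paper takes elsewhere.
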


The following lemma is well known for functions in $\Sch(\R^{d})$, and hence in $\Sch_\omega(\R^d).$ So we omit its proof.
\begin{Lemma}
\label{lemma11GZ}
For $f,\varphi\in\Sch_\omega(\R^d)$ we have that
\beqsn
\widehat{V_\varphi f}(\eta,y)=(2\pi)^de^{i\langle\eta,y\rangle}
f(-y)\overline{\widehat{\varphi}(\eta)}
\qquad\forall(\eta,y)\in\R^{2d}.
\eeqsn
\end{Lemma}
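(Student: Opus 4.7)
The plan is to obtain $\widehat{V_\varphi f}(\eta,y)$ by a direct computation from the integral representation \eqref{13}. Since $f,\varphi\in\Sch_\omega(\R^d)\subseteq\Sch(\R^d)$, the product $G(x,t):=f(t)\overline{\varphi(t-x)}$ is a Schwartz function on $\R^{2d}$ and all the order-of-integration interchanges below are standard.

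The first step is to recognize $V_\varphi f(x,\xi)$ as the partial Fourier transform of $G$ in the second variable, i.e.\ $V_\varphi f(x,\xi)=\widehat{G(x,\cdot)}(\xi)$. Applying the Fourier inversion identity $\widehat{\widehat{h}}(y)=(2\pi)^d h(-y)$ in the $\xi$-variable yields
\beqsn
\F_{\xi\to y}\bigl[V_\varphi f(x,\cdot)\bigr](y)=(2\pi)^d\, G(x,-y)=(2\pi)^d\,f(-y)\,\overline{\varphi(-y-x)}.
\eeqsn

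The second step is to take the Fourier transform in $x$ of this expression. After pulling out the factor $f(-y)$, the change of variables $u=-y-x$ converts the remaining $x$-integral into $e^{i\langle\eta,y\rangle}$ times $\int_{\R^d}\overline{\varphi(u)}e^{i\langle u,\eta\rangle}\,du=\overline{\widehat{\varphi}(\eta)}$, which is precisely the claimed formula. No step is a serious obstacle; the only care needed is with the sign and normalization conventions of the Fourier transform, so that the double partial transform produces $h(-y)$ rather than $h(y)$ and the conjugate on $\varphi$ survives the substitution to be packaged as $\overline{\widehat{\varphi}(\eta)}$.
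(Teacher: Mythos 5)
Your computation is correct and is exactly the standard argument: the paper itself omits the proof of this lemma, remarking only that it is well known for $\Sch(\R^d)$, so your proposal supplies the expected verification. All the conventions check out under the paper's normalization $\hat{f}(\xi)=\int e^{-i\langle x,\xi\rangle}f(x)\,dx$ (so that $\widehat{\widehat{h}}(y)=(2\pi)^d h(-y)$, and the substitution $u=-y-x$ correctly produces $e^{i\langle\eta,y\rangle}\overline{\widehat{\varphi}(\eta)}$), and the iterated-transform manipulation is legitimate since $V_\varphi f\in\Sch(\R^{2d})$ and $G(x,\cdot)$ is Schwartz for each fixed $x$, as you note.
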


As a consequence, we can deduce the following result.
\begin{Prop}
\label{propcontS}
Let $\varphi\in\Sch_\omega(\R^d)\setminus\{0\}$. Then
\beqsn
V_\varphi:\ \Sch_\omega(\R^d)\longrightarrow\Sch_\omega(\R^{2d})
\eeqsn
is continuous.
\end{Prop}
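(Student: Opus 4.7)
The plan is to apply the closed graph theorem. Both $\Sch_\omega(\R^d)$ and $\Sch_\omega(\R^{2d})$ are Fr\'echet spaces under the equivalent system (\ref{new-seminorms}): restricting to $\lambda,\mu\in\N$ gives a countable generating family, and completeness is standard in the Beurling setting. Moreover, Theorem \ref{th27GZ} already ensures that $V_\varphi$ sends $\Sch_\omega(\R^d)$ into $\Sch_\omega(\R^{2d})$, so only the closedness of the graph of $V_\varphi$ needs to be checked.

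To verify closedness, assume $f_n\to f$ in $\Sch_\omega(\R^d)$ and $V_\varphi f_n\to g$ in $\Sch_\omega(\R^{2d})$. Taking $\alpha=0$ in (\ref{new-seminorms}) shows that $q_{\lambda,\mu}(f_n-f)$ dominates $\sup_{y\in\R^d}|f_n(y)-f(y)|$ (up to a fixed factor coming from $\varphi^*_\omega(0)$), so $f_n\to f$ uniformly on $\R^d$. Since $\varphi\in\Sch_\omega(\R^d)\subset L^1(\R^d)$, a dominated convergence argument applied to (\ref{13}) yields pointwise convergence $V_\varphi f_n(z)\to V_\varphi f(z)$ for every $z\in\R^{2d}$. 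On the other hand, convergence in $\Sch_\omega(\R^{2d})$ forces pointwise convergence of $V_\varphi f_n$ to $g$; hence $g=V_\varphi f$ and the graph is closed. The closed graph theorem then delivers continuity.

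The only real (and rather mild) obstacle is the interchange of limit and integral: one needs that $\Sch_\omega$-convergence is strong enough to admit dominated convergence against the window, which is immediate from $\varphi\in L^1(\R^d)$. A direct alternative would consist in estimating each seminorm $q_{\lambda,\mu}(V_\varphi f)$ on $\R^{2d}$ by a single seminorm of $f$ on $\R^d$, using Lemma \ref{lemma11GZ} to handle the Fourier-side condition (ii) in Definition \ref{defSomega} and the subadditivity $\omega(x,\xi)\le\omega(x)+\omega(\xi)$ to split the exponential weights; however, this entails significantly heavier bookkeeping and adds nothing essential beyond what the closed graph argument already provides.
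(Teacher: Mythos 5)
Your proof is correct, but it takes a genuinely different route from the paper's. The paper also uses that $\Sch_\omega$ is Fr\'echet, yet instead of the closed graph theorem it verifies sequential continuity directly (which suffices between metrizable spaces): given $f_n\to f$ in $\Sch_\omega(\R^d)$, it invokes Lemma \ref{lemma11GZ} to write $\widehat{V_\varphi f_n}(\eta,y)=(2\pi)^d e^{i\langle\eta,y\rangle}f_n(-y)\overline{\widehat{\varphi}(\eta)}$, deduces $\widehat{V_\varphi f_n}\to\widehat{V_\varphi f}$ in $\Sch_\omega(\R^{2d})$, and concludes by continuity of the inverse Fourier transform on $\Sch_\omega(\R^{2d})$. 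Since that argument must produce convergence in the topology of the target, it tacitly relies on the continuity of $f\mapsto e^{i\langle\eta,y\rangle}f(-y)\overline{\widehat{\varphi}(\eta)}$ into $\Sch_\omega(\R^{2d})$ --- a routine but genuine seminorm check in which the derivatives of the oscillating factor must be absorbed by the $\varphi^*_\omega$-weights. Your closed graph argument buys precisely the avoidance of any such topological estimate: once Theorem \ref{th27GZ} guarantees that $V_\varphi$ maps into $\Sch_\omega(\R^{2d})$, you only need to identify the graph limit pointwise, and there a crude bound suffices; indeed you do not even need dominated convergence, since $|V_\varphi f_n(z)-V_\varphi f(z)|\leq \|f_n-f\|_{L^\infty}\|\varphi\|_{L^1}$ directly from \eqref{13}. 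Your supporting details are all in order: the seminorms \eqref{new-seminorms} are monotone in $\lambda,\mu$, so $\lambda,\mu\in\N$ gives a countable fundamental system; taking $\alpha=0$ does dominate the sup norm; and sequential closedness of the graph is closedness by metrizability. Note also that the ``direct alternative'' you sketch at the end is yet a third path, not the paper's: the paper uses Lemma \ref{lemma11GZ} not for seminorm bookkeeping but to transport sequential convergence through the Fourier transform, which keeps its computation short at the price of the unverified continuity step noted above, whereas your argument is the most economical of the three in terms of estimates actually checked.
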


\begin{proof}
Let us first remark that if $f\in\Sch_\omega(\R^d)$ then
$V_\varphi f\in\Sch_\omega(\R^{2d})$ by Theorem~\ref{th27GZ}.

Since $\Sch_{\omega}$ is a Fr\'echet space, to prove the continuity of
$V_\varphi$ we consider a sequence $\{f_n\}_{n\in\N}\subset\Sch_\omega(\R^d)$
such that
\beqs
\label{VS1}
f_n\longrightarrow f\in\Sch_\omega(\R^d)\qquad
\mbox{in}\ \Sch_\omega(\R^d)
\eeqs
and prove that $V_\varphi f_n\to V_\varphi f$ in $\Sch_\omega(\R^{2d})$.

Indeed, \eqref{VS1} implies that
\beqsn
e^{i\langle\eta,y\rangle}f_n(-y)\overline{\widehat{\varphi}(\eta)}
\longrightarrow e^{i\langle\eta,y\rangle}f(-y)\overline{\widehat{\varphi}(\eta)}
\qquad
\mbox{in}\ \Sch_\omega(\R^{2d})
\eeqsn
and hence, by Lemma~\ref{lemma11GZ},
\beqsn
\widehat{V_\varphi f_n}\rightarrow\widehat{V_\varphi f}
\qquad
\mbox{in}\ \Sch_\omega(\R^{2d}).
\eeqsn
Applying the inverse Fourier transform, which is continuous on $\Sch_\omega$,
we have that
\beqsn
V_\varphi f_n\rightarrow V_\varphi f
\qquad
\mbox{in}\ \Sch_\omega(\R^{2d}).
\eeqsn
and the proof is complete.
\end{proof}
The short-time Fourier transform also provides a new equivalent system of seminorms for $\Sch_{\omega}(\R^{d})$
\begin{Prop}
  \label{cor1126G}
  If $\varphi\in\Sch_\omega(\R^d)\setminus\{0\}$, then the collection of
  seminorms
  \beqsn
  \|V_\varphi f\|_{\omega,\lambda}:=\sup_{z\in\R^{2d}}|V_\varphi f(z)|e^{\lambda\omega(z)},
  \eeqsn
  for $\lambda>0$, forms an equivalent system of seminorms for
  $\Sch_\omega(\R^d)$.
\end{Prop}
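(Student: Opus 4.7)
The proposition asserts equivalence of two systems of seminorms on the Fr\'echet space $\Sch_\omega(\R^d)$; thus it suffices to check that (i) each $\|V_\varphi\cdot\|_{\omega,\lambda}$ is dominated by some $q_{\lambda',\mu'}$ from \eqref{new-seminorms}, and conversely (ii) each $q_{\lambda,\mu}$ is dominated by some $\|V_\varphi\cdot\|_{\omega,\nu}$.

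Direction (i) is essentially Proposition~\ref{propcontS}. Since $\varphi^*_\omega(0)=0$, the map $g\mapsto\sup_{z\in\R^{2d}}|g(z)|e^{\lambda\omega(z)}$ is dominated by $q_{\lambda',\lambda}(g)$ for any $\lambda'>0$ (it corresponds to the $\alpha=0$ term in the supremum defining \eqref{new-seminorms}). Composing with the continuous map $V_\varphi:\Sch_\omega(\R^d)\to\Sch_\omega(\R^{2d})$ then dominates $\|V_\varphi f\|_{\omega,\lambda}$ by a standard seminorm in $f$.

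For direction (ii), I would differentiate the inversion formula \eqref{1} under the integral sign, using Leibniz to expand $D^\alpha_y[e^{i\langle y,\xi\rangle}\varphi(y-x)]=\sum_{\gamma\leq\alpha}\binom{\alpha}{\gamma}\xi^\gamma e^{i\langle y,\xi\rangle}D^{\alpha-\gamma}\varphi(y-x)$. The pointwise estimates I would then combine are: $|V_\varphi f(z)|\leq\|V_\varphi f\|_{\omega,\nu}e^{-\nu\omega(z)}$ together with $\omega(z)\geq\tfrac12(\omega(x)+\omega(\xi))$ (from $|z|\geq\max(|x|,|\xi|)$); Young's inequality $|\xi|^{|\gamma|}\leq e^{\rho\omega(\xi)}e^{\rho\varphi^*_\omega(|\gamma|/\rho)}$; the $\varphi$-bound $|D^{\alpha-\gamma}\varphi(y-x)|\leq q_{\rho,\sigma}(\varphi)e^{\rho\varphi^*_\omega(|\alpha-\gamma|/\rho)}e^{-\sigma\omega(y-x)}$; and subadditivity $e^{\mu\omega(y)}\leq e^{\mu\omega(x)}e^{\mu\omega(y-x)}$. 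Choosing $\nu$ large enough with respect to $\rho,\sigma,\mu$, property $(\gamma)$ forces the integrals in $x$ and $\xi$ to converge absolutely.

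What then survives is the combinatorial factor $\sum_{\gamma\leq\alpha}\binom{\alpha}{\gamma}e^{\rho[\varphi^*_\omega(|\alpha-\gamma|/\rho)+\varphi^*_\omega(|\gamma|/\rho)]}$. Using the superadditivity $\varphi^*_\omega(a)+\varphi^*_\omega(b)\leq\varphi^*_\omega(a+b)$ (convexity plus $\varphi^*_\omega(0)=0$) together with $|\alpha-\gamma|+|\gamma|=|\alpha|$, the exponentials collapse to $e^{\rho\varphi^*_\omega(|\alpha|/\rho)}$, leaving a factor $2^{|\alpha|}$. To absorb this I would invoke the weight inequality $|\alpha|\log 2+\rho\varphi^*_\omega(|\alpha|/\rho)\leq(\rho/2)\varphi^*_\omega(2|\alpha|/\rho)$, which follows from subadditivity $\omega(2t)\leq 2\omega(t)$ via the identity $\varphi^*_\omega(s)\geq 2\varphi^*_\omega(s/2)+s\log 2$. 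Setting $\rho'=\rho/2$ yields $q_{\rho',\mu}(f)\leq C\|V_\varphi f\|_{\omega,\nu}$, completing (ii). The main obstacle is the constant bookkeeping: one must choose a single $\nu$ large enough that the decay of $V_\varphi f$ simultaneously compensates the growth of $\xi^\gamma$, the $\omega$-losses coming from $\varphi$ and from the $\mu\omega(y)$ factor, and still leaves enough room for the integration on $\R^{2d}$.
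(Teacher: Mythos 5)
Your proposal is correct, and half of it takes a genuinely different route from the paper. Your direction (ii) --- differentiating the inversion formula \eqref{1} under the integral, Leibniz expansion, the Young-type bound \eqref{3}, splitting $e^{\mu\omega(y)}\leq e^{\mu\omega(x)}e^{\mu\omega(y-x)}$, and integrability from property $(\gamma)$ --- is essentially the paper's computation \eqref{2}--\eqref{7}; the only cosmetic differences are that the paper estimates the seminorms $p_{\lambda,\mu}$ of \eqref{seminorms} rather than your $q_{\lambda,\mu}$ from \eqref{new-seminorms}, and absorbs the factor $2^{|\alpha|}$ by citing \eqref{4} (a factor $3\lambda$, via \cite{BJ-Kotake}) where you use the self-contained identity $\varphi^*_\omega(s)\geq 2\varphi^*_\omega(s/2)+s\log 2$ coming from $\omega(2t)\leq 2\omega(t)$; both devices are standard and interchangeable, and your collapsing of the Leibniz sum via superadditivity of $\varphi^*_\omega$ (valid since $|\alpha-\gamma|+|\gamma|=|\alpha|$ and $\varphi^*_\omega(0)=0$) is sound. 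The genuine divergence is in direction (i): the paper never estimates $\|V_\varphi f\|_{\omega,\lambda}$ by standard seminorms directly; instead it verifies that $\tilde{\Sch}_\omega(\R^d)$ is a Fr\'echet space (completeness being checked \emph{using} Proposition~\ref{propcontS}) and then invokes the open mapping theorem to turn the continuous identity $I:\tilde{\Sch}_\omega(\R^d)\to\Sch_\omega(\R^d)$ into an isomorphism. You instead note that $g\mapsto\sup_{z}|g(z)|e^{\lambda\omega(z)}$ is the $\alpha=0$ part of \eqref{new-seminorms} on $\Sch_\omega(\R^{2d})$ and compose with the continuity of $V_\varphi$ from Proposition~\ref{propcontS}; since that proposition precedes the present one in the paper, there is no circularity, and since the $q$-seminorm system is directed, continuity gives domination by a single seminorm. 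Your route is the more economical one: it avoids the open mapping theorem and the completeness verification entirely and yields explicit quantitative bounds in both directions, whereas the paper's functional-analytic detour buys nothing here --- its own completeness argument already contains exactly the continuity statement you exploit. The one place requiring care, which you correctly flagged, is choosing the single index $\nu$ in (ii) so that $e^{-\nu\omega(z)}$, via $\omega(z)\geq\tfrac12(\omega(x)+\omega(\xi))$, beats the accumulated losses $e^{\mu\omega(x)+\rho\omega(\xi)}$ and still leaves $e^{-\omega(z)}$ for the integration; this is the same bookkeeping the paper performs to reach the index $\mu+3\lambda+1$ in \eqref{7}.
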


\begin{proof}
  Set
  \beqsn
  \tilde{\Sch}_\omega(\R^d):=\{f\in\Sch(\R^d):\ \|V_\varphi f\|_{\omega,\lambda}
  <+\infty\ \forall\lambda>0\}.
  \eeqsn

  By Theorem~\ref{th27GZ} the sets $\tilde{\Sch}_\omega(\R^d)$ and
  $\Sch_\omega(\R^d)$ are equal. We have to prove that they have the same
  topology.

  By the inversion formula \eqref{1} we have that, for $z=(x,\xi)\in\R^{2d}$
  and $\lambda,\mu>0$,
  \beqs
  \nonumber
  \lefteqn{e^{-\lambda\varphi^*_\omega\left(\frac{|\alpha|}{\lambda}\right)}
   e^{-\mu\varphi^*_\omega\left(\frac{|\beta|}{\mu}\right)}|y^\beta D_y^\alpha f(y)|}\\
\nonumber
&&\leq
   C e^{-\lambda\varphi^*_\omega\left(\frac{|\alpha|}{\lambda}\right)}
   e^{-\mu\varphi^*_\omega\left(\frac{|\beta|}{\mu}\right)}\int_{\R^{2d}}
   |V_\varphi f(z)|\cdot|y^\beta D_y^\alpha
   (\Pi(z)\varphi)(y)|dz\\
   \nonumber
  && =C e^{-\lambda\varphi^*_\omega\left(\frac{|\alpha|}{\lambda}\right)}
   e^{-\mu\varphi^*_\omega\left(\frac{|\beta|}{\mu}\right)}\int_{\R^{2d}}|V_\varphi f(x,\xi)|\cdot
   |y^\beta D_y^\alpha e^{i\langle y,\xi\rangle}\varphi(y-x)|dxd\xi\\
   \label{2}
   &&\leq C\sum_{\gamma\leq\alpha}\binom{\alpha}{\gamma}2^{-|\alpha|}
   \int_{\R^{2d}}|V_\varphi f(x,\xi)|\cdot|y|^{|\beta|}
   e^{-\mu\varphi^*_\omega\left(\frac{|\beta|}{\mu}\right)}\\
\nonumber
&&\qquad \cdot|\xi|^{|\alpha-\gamma|}
   |D_y^\gamma\varphi(y-x)|e^{-\lambda\varphi^*_\omega\left(\frac{|\alpha|}{\lambda}\right)}
   2^{|\alpha|}dxd\xi
   \eeqs
   for some $C>0$.

   The following property is known and can be found, for instance, in Lemma 4.7(i) of \cite{BJO} (see also \cite{FGJ}):
   \beqs
   \label{3}
   |y|^{|\beta|}e^{-\mu\varphi^*_\omega\left(\frac{|\beta|}{\mu}\right)}\leq e^{\mu\omega(y)}, \mbox{ for all }y\in\R^{d}.
   \eeqs
   Moreover, since the weight function $\omega$ is subadittive and $\varphi^{*}$ is convex, we can use, for instance, Proposition~2.1(e) of \cite{BJ-Kotake} to obtain
   \beqs
   \label{4}
   2^{|\alpha|}e^{-\lambda\varphi^*_\omega\left(\frac{|\alpha|}{\lambda}\right)}\leq
   e^{3\lambda}e^{-3\lambda\varphi^*_\omega\left(\frac{|\alpha|}{3\lambda}\right)}.
   \eeqs

   Substituting \eqref{3} and \eqref{4} into \eqref{2}, by the subadditivity
of $\omega$ we have
   \beqs
   \nonumber
   e^{-\lambda\varphi^*_\omega\left(\frac{|\alpha|}{\lambda}\right)-\mu\varphi^*_\omega
     \left(\frac{|\beta|}{\mu}\right)}|y^\beta D_y^\alpha f(y)|
     \leq&& C_\lambda\sum_{\gamma\leq\alpha}\binom{\alpha}{\gamma}2^{-|\alpha|}
   \int_{\R^{2d}}|V_\varphi f(x,\xi)|e^{\mu\omega(x)}e^{\mu\omega(y-x)}\\
 \label{5}
&&\cdot|\xi|^{|\alpha-\gamma|}   |D_y^\gamma\varphi(y-x)|
   e^{-3\lambda\varphi^*_\omega\left(\frac{|\alpha|}{3\lambda}\right)}dxd\xi
   \eeqs
   for some $C_\lambda>0$.


   Since $\varphi\in\Sch_\omega(\R^{d})$, by \eqref{new-seminorms}, for every $\lambda,\mu>0$ there is a constant $C_{\lambda,\mu}>0$ such that for all $\gamma\in\N_{0}^{d}$ and $y\in\R^{d}$,
   \beqs
   |D_y^\gamma \varphi(y)|e^{\mu\omega(y)}\leq
   \label{6}
   C_{\lambda,\mu}
   e^{\lambda\varphi^*_\omega\left(\frac{|\gamma|}{\lambda}\right)}.
   \eeqs
      From \eqref{6} with $3\lambda$ instead of $\lambda$ and $y-x$ instead of $y$, we have that for every
   $\mu,\lambda>0$ there exists a constant $C_{\mu,\lambda}>0$
   such that
   \beqsn
   e^{-\lambda\varphi^*_\omega\left(\frac{|\alpha|}{\lambda}\right)-\mu\varphi^*_\omega
     \left(\frac{|\beta|}{\mu}\right)}|y^\beta D_y^\alpha f(y)|
   \leq&& C_{\mu,\lambda}\sum_{\gamma\leq\alpha}\binom{\alpha}{\gamma}2^{-|\alpha|}
   \\
   &&\cdot
   \int_{\R^{2d}}|V_\varphi f(x,\xi)|e^{\mu\omega(x)}|\xi|^{|\alpha-\gamma|}
   e^{3\lambda\varphi^*_\omega\left(\frac{|\gamma|}{3\lambda}\right)-3\lambda\varphi^*_\omega
     \left(\frac{|\alpha|}{3\lambda}\right)}dxd\xi.
   \eeqsn

   By \eqref{3} we have $|\xi|^{|\alpha-\gamma|}\le e^{3\lambda\omega(\xi)+3\lambda\varphi^*(\frac{|\alpha-\gamma|}{3\lambda})}$. So, from the convexity of $\varphi^{*}$, we obtain
   \beqs
   \nonumber
\lefteqn{e^{-\lambda\varphi^*_\omega\left(\frac{|\alpha|}{\lambda}\right)-\mu\varphi^*_\omega
     \left(\frac{|\beta|}{\mu}\right)}|y^\beta D_y^\alpha f(y)|}\\
   &&\leq C_{\mu,\lambda}\sum_{\gamma\leq\alpha}\binom{\alpha}{\gamma}2^{-|\alpha|}
   \int_{\R^{2d}}|V_\varphi f(x,\xi)|e^{\mu\omega(x)}e^{3\lambda\omega(\xi)}dxd\xi\\
   \nonumber
   &&\leq C_{\mu,\lambda}\int_{\R^{2d}}|V_\varphi f(z)|e^{(\mu+3\lambda+1)\omega(z)}
   e^{-\omega(z)}dz\\
   \label{7}
   &&\leq C'_{\mu,\lambda}\|V_\varphi f\|_{\omega,\mu+3\lambda+1},
   \eeqs
   for $C'_{\mu,\lambda}:=C_{\mu,\lambda} \int_{\R^{2d}}
   e^{-\omega(z)}dz$, which is finite by condition $(\gamma)$ of Definition~\ref{defweight}.

   It is easy to see that $\tilde{\Sch}_\omega(\R^d)$ is a Fr\'echet space. Indeed, the estimate \eqref{7} implies that the identity operator
   $I:\ \tilde{\Sch}_\omega(\R^d)\to\Sch_\omega(\R^d)$ is continuous. Hence, any Cauchy sequence $\{f_n\}_{n\in\N}$  in
$\tilde{\Sch}_\omega(\R^d)$ is a Cauchy sequence in
$\Sch_\omega(\R^d)$. So, it converges in $\Sch_{\omega}(\R^{d})$ to some $f$ (because $\Sch_{\omega}(\R^{d})$ is complete). From Proposition~\ref{propcontS}, $\{V_\varphi f_n\}_{n\in\N}$ converges to $V_\varphi f$ in $\Sch_\omega(\R^{2d})$. Therefore, $\{f_n\}_{n\in\N}$ converges to  $f$ in $\tilde{\Sch}_\omega(\R^d).$

We can
apply the open mapping theorem to conclude that $I$
   is an isomorphism and hence the two topologies on $\Sch_\omega(\R^d)$
   coincide.
   \end{proof}

Now, we can prove the following

\begin{Prop}
  \label{cor1127G}
  Assume that $\psi,\gamma\in\Sch_\omega(\R^d)\setminus\{0\}$ with
  $\langle\psi,\gamma\rangle\neq0$. Then the following assertions hold:
  \begin{itemize}
  \item[(a)]
    If $F:\ \R^{2d}\to\C$ is a measurable function that satisfies, for some $c,\lambda>0$,
    \beqs
    \label{9}
    |F(z)|\leq ce^{\lambda\omega(z)}\qquad\forall z\in\R^{2d},
    \eeqs
    then
    \beqsn
    \Sch_\omega(\R^d)\ni\varphi\mapsto\langle f,\varphi\rangle:=\int_{\R^{2d}}
    F(z)\langle\Pi(z)\gamma,\varphi\rangle dz
    \eeqsn
    define an $\omega$-tempered distribution $f\in\Sch'_\omega(\R^d)$.
  \item[(b)]
    In particular, if $F=V_\psi f$ for some $f\in\Sch'_\omega(\R^d)$, then the
    following {\em inversion formula} holds:
    \beqs
    \label{8}
    f=\frac{1}{(2\pi)^d\langle\gamma,\psi\rangle}\int_{\R^{2d}}
    V_\psi f(z)\Pi(z)\gamma dz.
    \eeqs
  \end{itemize}
\end{Prop}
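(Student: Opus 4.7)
My plan is to handle (a) and (b) in order, and then flag where the interchange of the $\omega$-tempered distribution with an $\Sch_\omega$-valued integral has to be justified carefully.

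For part (a), the first observation is that by \eqref{13} and the sesquilinearity of the $L^2$ pairing, $\langle \Pi(z)\gamma,\varphi\rangle = \overline{V_\gamma\varphi(z)}$ for every $\varphi\in\Sch_\omega(\R^d)$. Hence, by Theorem~\ref{th27GZ}, for every $\mu>0$ there is $C_\mu>0$ with $|\langle \Pi(z)\gamma,\varphi\rangle|\leq C_\mu e^{-\mu\omega(z)}$. Combining with \eqref{9} and choosing $\mu=\lambda+s$ with $s$ large enough so that $\int_{\R^{2d}}e^{-s\omega(z)}dz<+\infty$ (possible by $(\gamma)$ of Definition~\ref{defweight}, exactly as in the estimate \eqref{7}), I obtain absolute convergence of the integral together with the bound $|\langle f,\varphi\rangle|\leq C'\|V_\gamma\varphi\|_{\omega,\lambda+s}$. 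Linearity is immediate, and Proposition~\ref{cor1126G} identifies the right-hand side with a continuous seminorm on $\Sch_\omega(\R^d)$, giving $f\in\Sch'_\omega(\R^d)$.

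For part (b), Theorem~\ref{th24GZ} shows that $F=V_\psi f$ satisfies \eqref{9}, so part (a) already tells us that the right-hand side of \eqref{8} defines some $\tilde f\in\Sch'_\omega(\R^d)$. The point is to prove $\tilde f=f$ by testing against an arbitrary $\varphi\in\Sch_\omega(\R^d)$. Since $\langle\gamma,\psi\rangle=\overline{\langle\psi,\gamma\rangle}\neq 0$, the inversion formula \eqref{12} applied to $\varphi$ with windows $\gamma$ and $\psi$ gives
\begin{equation*}
\varphi=\frac{1}{(2\pi)^d\langle\psi,\gamma\rangle}\int_{\R^{2d}}V_\gamma\varphi(z)\,\Pi(z)\psi\,dz,
\end{equation*}
where the integral converges in $\Sch_\omega(\R^d)$ by Proposition~\ref{prop26GZ} (since $V_\gamma\varphi$ has rapid $\omega$-decay by Theorem~\ref{th27GZ}). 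Applying $f$ to both sides and using sesquilinearity together with $\langle f,\Pi(z)\psi\rangle=V_\psi f(z)$, I get
\begin{equation*}
\langle f,\varphi\rangle=\frac{1}{(2\pi)^d\langle\gamma,\psi\rangle}\int_{\R^{2d}}V_\psi f(z)\,\overline{V_\gamma\varphi(z)}\,dz=\frac{1}{(2\pi)^d\langle\gamma,\psi\rangle}\int_{\R^{2d}}V_\psi f(z)\,\langle\Pi(z)\gamma,\varphi\rangle\,dz=\langle\tilde f,\varphi\rangle,
\end{equation*}
which is the claim.

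The only delicate step is the commutation of $f\in\Sch'_\omega(\R^d)$ with the $\Sch_\omega$-valued integral of $z\mapsto V_\gamma\varphi(z)\Pi(z)\psi$, i.e.\ the equality $\langle f,\int V_\gamma\varphi(z)\Pi(z)\psi\,dz\rangle=\int\overline{V_\gamma\varphi(z)}\,\langle f,\Pi(z)\psi\rangle\,dz$. This is the expected main obstacle. I would handle it by Riemann-sum approximation: partition $\R^{2d}$, form partial sums $\sum_k V_\gamma\varphi(z_k)\Pi(z_k)\psi\,|Q_k|$, and check convergence to the integral \emph{in every seminorm $q_{\lambda,\mu}$ of \eqref{new-seminorms}}, using the rapid $\omega$-decay of $V_\gamma\varphi$ (Theorem~\ref{th27GZ}) against the at-most-$\omega$-moderate growth of the seminorms of $\Pi(z)\psi$ in $z$ (a direct check from the definition of $\Pi(z)$ and the Leibniz rule). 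Sequential continuity of $f$ on the Fr\'echet space $\Sch_\omega(\R^d)$ then allows one to pass to the limit term-by-term, producing the Riemann sums of the scalar integral on the right and establishing the interchange.
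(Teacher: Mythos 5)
Your proof is correct and follows essentially the same route as the paper: part (a) is the same estimate $|\langle f,\varphi\rangle|\le c'\|V_\gamma\varphi\|_{\omega,\lambda+s}$ combined with Proposition~\ref{cor1126G}, and part (b) is the same duality computation obtained by applying $f$ to the representation of $\varphi$ given by the inversion formula \eqref{12}. The only divergence is that where the paper justifies interchanging $f$ with the $\Sch_\omega$-valued integral by citing \cite[pg 43]{G} on vector-valued integrals, you sketch a Riemann-sum approximation in the seminorms \eqref{new-seminorms}, which is a sound self-contained substitute for that same step.
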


\begin{proof}
  From  \eqref{9} we have, for all
  $\varphi\in\Sch_\omega(\R^d)$,
  \beqs
  \nonumber
  |\langle f,\varphi\rangle|\leq&&\int_{\R^{2d}}|F(z)|\cdot|V_\gamma \varphi(z)|dz\\
  \nonumber
  \leq &&c\int_{\R^{2d}}e^{\lambda\omega(z)+\omega(z)}|V_\gamma\varphi(z)|e^{-\omega(z)}dz\\
  \label{11}
  \leq&&c' \|V_\gamma\varphi\|_{\omega,\lambda+1}
  \eeqs
  for some $c'>0$.

  From Proposition~\ref{cor1126G} the inequality \eqref{11} implies that
  $f$ defines a continuous linear functional on $\Sch_\omega(\R^d)$, i.e.
  $f\in\Sch'_\omega(\R^d)$.
  This proves $(a)$.

  In particular, if $F=V_\psi f$ for some $f\in\Sch'_\omega(\R^d)$ then
  $F$ satisfies \eqref{9} for Theorem~\ref{th24GZ} and hence \eqref{8}
  defines an $\omega$-tempered distribution $\tilde{f}\in\Sch'_\omega(\R^d)$
 given by
  \beqsn
  \langle\tilde{f},\varphi\rangle=&&\frac{1}{(2\pi)^d\langle\gamma,\psi\rangle}
  \int_{\R^{2d}}V_\psi f(z)\langle\Pi(z)\gamma,\varphi\rangle dz
  \qquad\forall\varphi\in\Sch_\omega(\R^d).
  \eeqsn

  However, from \eqref{12} we have that
  \beqsn
  \varphi=&&\frac{1}{(2\pi)^d\langle\psi,\gamma\rangle}\int_{\R^{2d}}V_\gamma\varphi(z)\Pi(z)\psi dz
  \eeqsn
  and then (see also \cite[pg 43]{G} for vector valued integrals)
  \beqsn
  \langle f,\varphi\rangle=&&\frac{1}{(2\pi)^d\overline{\langle\psi,\gamma\rangle}}\int_{\R^{2d}}\overline{V_\gamma\varphi(z)}\langle f,\Pi(z)\psi \rangle dz \\
  =&&\frac{1}{(2\pi)^d\langle\gamma,\psi\rangle}\int_{\R^{2d}}
\langle\Pi(z)\gamma,\varphi\rangle
  V_\psi f(z)dz\\
  =&&\langle\tilde{f},\varphi\rangle,\qquad\varphi\in\Sch_\omega(\R^d).
  \eeqsn
Therefore $f=\tilde f$ and $(b)$ is proved.
  \end{proof}

Let us now recall the definition of the adjoint operator of $V_\varphi$.
We consider, for $\varphi\in L^2(\R^d)$, the operator
\beqsn
A_\varphi:\ L^2(\R^{2d})\longrightarrow L^2(\R^d)
\eeqsn
defined by
\beqsn
A_\varphi F=\int_{\R^{2d}}F(z)\Pi(z)\varphi\, dz.
\eeqsn

This is the adjoint operator of $V_\varphi: L^2(\R^d)\to L^2(\R^d)$ since,
for all $F\in L^2(\R^{2d})$ and $h\in L^2(\R^d)$,
\beqsn
\langle A_\varphi F,h\rangle=\int_{\R^{2d}}F(z)\langle\Pi(z)\varphi,h\rangle
dz
=\langle F,V_\varphi h\rangle=\langle V_\varphi^* F,h\rangle.
\eeqsn

In particular, for $\varphi\in\Sch_\omega(\R^d)$ and $F\in\Sch_\omega(\R^{2d})$
we can define the adjoint operator $V_\varphi^* F=A_\varphi F$. We observe that
$V_\varphi^* F\in\Sch_\omega(\R^d)$. In fact, if
$G(x,\xi,t):=F(x,\xi)\varphi(t-x)\in\Sch_\omega(\R^{3d})$, we can write
$A_\varphi F$ as a partial Fourier transform:
\beqs
\label{Ad5}
A_\varphi F(t)=\int_{\R^{2d}}F(x,\xi)\varphi(t-x)e^{i\langle t,\xi\rangle}dxd\xi
=\left.\left(\F_{(x,\xi)}G\right)(x',\xi',t)\right|_{(x',\xi',t)=(0,-t,t)}.
\eeqs

Then
\beqs
\label{Eq1}
V_\varphi^*:\ \Sch_\omega(\R^{2d})\longrightarrow\Sch_\omega(\R^d)
\eeqs
continuously.

Moreover, the inversion formula \eqref{12} gives, for
$\varphi,\psi,f\in \Sch_\omega(\R^d)$ with $\langle\varphi,\psi\rangle\neq0$,
\beqsn
\frac{1}{\langle\varphi,\psi\rangle}V_\varphi^*V_\psi f=
\frac{1}{\langle\varphi,\psi\rangle}\int_{\R^{2d}}V_\psi f(z)\Pi(z)\varphi dz
=(2\pi)^d f,
\eeqsn
i.e.
\beqs
\label{Ad9}
\frac{1}{(2\pi)^d\langle\varphi,\psi\rangle}V_\varphi^*V_\psi
=I_{\Sch_\omega(\R^d)}.
\eeqs

More in general, if $\varphi\in\Sch_\omega(\R^d)\setminus\{0\}$ and $F$ is a
measurable function on $\R^{2d}$, we define the adjoint operator
\beqs
\label{Ad1}
V_\varphi^* F=\int_{\R^{2d}}F(z)\Pi(z)\varphi dz,
\eeqs
where the integral is interpreted, if necessary, in a weak sense, i.e.
\beqsn
\langle V_\varphi^*F,g\rangle=\int_{\R^{2d}}F(z)\langle\Pi(z)\varphi,g\rangle
dz
=\int_{\R^{2d}}F(z)\overline{V_\varphi g}(z)dz
=\langle F, V_\varphi g\rangle
\eeqsn
for $g\in\Sch_\omega(\R^d)$.

In particular, if $\varphi,\psi\in\Sch_\omega(\R^d)\setminus\{0\}$ with
$\langle\varphi,\psi\rangle\neq0$, by Theorem~\ref{th24GZ} and
Proposition~\ref{cor1127G} we can define the adjoint operator \eqref{Ad1}
for $F=V_\psi f$ with $f\in\Sch'_\omega(\R^d)$ and obtain that, for all
$g\in\Sch_\omega(\R^d)$,
\beqs
\label{Ad21}
\langle V_\varphi^*V_\psi f,g\rangle
=\int_{\R^{2d}}V_\psi f(z)\langle\Pi(z)\varphi,g\rangle dz
=(2\pi)^d\langle\varphi,\psi\rangle\langle f,g\rangle,
\eeqs
i.e.
\beqs
\label{Ad2}
\frac{1}{(2\pi)^d\langle\varphi,\psi\rangle}V_\varphi^*V_\psi
=I_{\Sch'_\omega(\R^d)}.
\eeqs

We can now prove the following proposition in a standard way.

  \begin{Prop}
  \label{lemma1133G}
  Let $\varphi,\psi,\gamma\in\Sch_\omega(\R^d)$ with $\langle\gamma,\psi\rangle\neq0$
  and let $f\in\Sch'_\omega(\R^d)$. Then
  \beqsn
  |V_\varphi f(z)|\leq\frac{1}{(2\pi)^d|\langle\gamma,\psi\rangle|}
  (|V_\psi f|\ast|V_\varphi\gamma|)(z),\qquad z=(x,\xi)\in\R^{2d}.
  \eeqsn
    \end{Prop}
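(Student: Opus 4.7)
The plan is to apply the inversion formula \eqref{8} of Proposition~\ref{cor1127G} to rewrite $f$ in terms of $V_\psi f$, then compute the STFT $V_\varphi$ of the resulting integral and recognize the outcome as a convolution after taking absolute values. The crucial ingredient is the covariance property of the STFT under phase-space shifts, namely the identity $|V_\varphi(\Pi(w)\gamma)(z)| = |V_\varphi\gamma(z-w)|$.

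First I would verify this covariance relation by direct computation. For $z=(x,\xi)$ and $w=(u,\eta)\in\R^{2d}$, writing $\Pi(w)\gamma(y)=e^{i\langle y,\eta\rangle}\gamma(y-u)$ and substituting into \eqref{13}, then changing variables $y\mapsto y+u$, one obtains
\beqsn
V_\varphi(\Pi(w)\gamma)(z)=e^{-i\langle u,\xi-\eta\rangle}V_\varphi\gamma(z-w),
\eeqsn
whose modulus equals $|V_\varphi\gamma(z-w)|$.

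Next, by Proposition~\ref{cor1127G}(b) we have in $\Sch'_\omega(\R^d)$
\beqsn
f=\frac{1}{(2\pi)^d\langle\gamma,\psi\rangle}\int_{\R^{2d}}V_\psi f(w)\Pi(w)\gamma\,dw.
\eeqsn
Pairing both sides with $\Pi(z)\varphi\in\Sch_\omega(\R^d)$ and recalling the definition $V_\varphi f(z)=\langle f,\Pi(z)\varphi\rangle$, I would interchange the pairing with the vector-valued integral (which is justified exactly as in the proof of Proposition~\ref{cor1127G}(b), via the weak definition in \eqref{Ad1} and the continuity of $V_\varphi$ established in Proposition~\ref{propcontS}). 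This yields
\beqsn
V_\varphi f(z)=\frac{1}{(2\pi)^d\langle\gamma,\psi\rangle}\int_{\R^{2d}}V_\psi f(w)\,\langle\Pi(w)\gamma,\Pi(z)\varphi\rangle\,dw
=\frac{1}{(2\pi)^d\langle\gamma,\psi\rangle}\int_{\R^{2d}}V_\psi f(w)\,V_\varphi(\Pi(w)\gamma)(z)\,dw.
\eeqsn

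Finally, taking absolute values inside the integral and substituting the covariance identity gives
\beqsn
|V_\varphi f(z)|\leq\frac{1}{(2\pi)^d|\langle\gamma,\psi\rangle|}\int_{\R^{2d}}|V_\psi f(w)|\,|V_\varphi\gamma(z-w)|\,dw
=\frac{1}{(2\pi)^d|\langle\gamma,\psi\rangle|}(|V_\psi f|\ast|V_\varphi\gamma|)(z),
\eeqsn
which is the desired inequality. The main technical subtlety I expect is justifying the swap of the $\Sch'_\omega$--$\Sch_\omega$ pairing with the vector-valued integral that defines $f$; but since $V_\psi f$ has the polynomial-exponential growth \eqref{Ad7}, $V_\varphi\gamma$ has $\Sch_\omega$-decay by Theorem~\ref{th27GZ}, and the action of $f$ on $\Pi(z)\varphi$ is by definition that pairing, the interchange is precisely the computation already carried out in Proposition~\ref{cor1127G}(b), so no new analytic difficulty appears.
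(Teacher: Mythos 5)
Your proposal is correct and is precisely the ``standard way'' the paper has in mind (the argument of \cite[Lemma 11.3.3]{G}): apply the inversion formula \eqref{8} of Proposition~\ref{cor1127G}(b) with test function $\Pi(z)\varphi$, use the covariance identity $|\langle\Pi(w)\gamma,\Pi(z)\varphi\rangle|=|V_\varphi\gamma(z-w)|$ (which the paper itself records in \eqref{28}), and take absolute values. The interchange you worry about is in fact automatic, since the integral in \eqref{8} is defined weakly, i.e.\ $\langle f,\cdot\rangle$ is by definition the integral of $V_\psi f(w)\langle\Pi(w)\gamma,\cdot\rangle$, so pairing with $\Pi(z)\varphi$ requires no further justification.
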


\section{The $\omega$-Gabor wave front set}
\label{sec3}
   In this section, we consider a global wave front set defined in terms of rapid decay
of the
   STFT in conical sets. After that, for a Gabor frame we define the {\em Gabor wave front set}, where conical sets are
   intersected with a lattice. We prove that these wave front sets coincide.

\begin{Def}
\label{def31RW}
Let $u\in\Sch'_\omega(\R^d)$ and $\varphi\in\Sch_\omega(\R^d)\setminus\{0\}$. We say that
$z_0=(x_0,\xi_0)\in\R^{2d}\setminus\{0\}$ is not in the $\omega$-wave front set $\WF'_\omega(u)$ of
$u$ if there exists an open conic set $\Gamma\subseteq\R^{2d}\setminus\{0\}$
containing $z_0$ and such that
\beqs
\label{16}
\sup_{z\in\Gamma}e^{\lambda\omega(z)}| V_\varphi u(z)|<+\infty,
\qquad\lambda>0.
\eeqs
\end{Def}

We observe that $\WF'_\omega(u)$ is a closed conic subset of
$\R^{2d}\setminus\{0\}$. Moreover, it does not depend on the choice of the window
function $\varphi$, as the following proposition shows.
\begin{Prop}
\label{cor33RW}
Let $u\in\Sch'_\omega(\R^d)$, $\varphi\in\Sch_\omega(\R^d)\setminus\{0\}$ and
$z_0\in\R^{2d}\setminus\{0\}$. Assume that there exists an open conic set
$\Gamma\subseteq\R^{2d}\setminus\{0\}$ containing $z_0$ such that
\eqref{16} is satisfied.
Then, for any $\psi\in\Sch_\omega(\R^d)\setminus\{0\}$ and for any open conic set
$\Gamma'\subseteq\R^{2d}\setminus\{0\}$ containing $z_0$ and such that
$\overline{\Gamma'\cap S_{2d-1}}\subseteq\Gamma$, where $S_{2d-1}$ is the unit
sphere in $\R^{2d}$, we have
\beqs
\label{22}
\sup_{z\in\Gamma'}e^{\lambda\omega(z)}|V_\psi u(z)|<+\infty,
\qquad\lambda>0.
\eeqs
\end{Prop}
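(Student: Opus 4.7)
The plan is to reduce the change of window $\varphi\to\psi$ to a convolution estimate via Proposition~\ref{lemma1133G}. Picking any $\gamma\in\Sch_\omega(\R^d)\setminus\{0\}$ with $\langle\gamma,\varphi\rangle\neq 0$ (for instance $\gamma=\varphi$), that proposition, applied so as to control $V_\psi u$ in terms of $V_\varphi u$, yields
\beqsn
|V_\psi u(z)|\leq\frac{1}{(2\pi)^d|\langle\gamma,\varphi\rangle|}\bigl(|V_\varphi u|\ast|V_\psi\gamma|\bigr)(z),
\eeqsn
and by Theorem~\ref{th27GZ} the kernel $V_\psi\gamma$ lies in $\Sch_\omega(\R^{2d})$, so $|V_\psi\gamma(\zeta)|\leq C_N e^{-N\omega(\zeta)}$ for every $N>0$. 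I would then split the convolution into $I_1(z)$ (integration over $\Gamma$, where hypothesis \eqref{16} applies) and $I_2(z)$ (integration over $\R^{2d}\setminus\Gamma$, where only the a priori bound of Theorem~\ref{th24GZ} is available).

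The geometric input is that, since $\overline{\Gamma'\cap S_{2d-1}}$ is compact and contained in the open cone $\Gamma$, there exists $\epsilon\in(0,1]$ such that $|z-w|\geq\epsilon|z|$ for all $z\in\Gamma'$ and $w\in\R^{2d}\setminus\Gamma$ (by homogeneity, applied to $z/|z|$ and $w/|z|$). Setting $n:=\lceil 1/\epsilon\rceil$, the subadditivity and monotonicity of $\omega$ give $\omega(t)\leq\omega(n\epsilon t)\leq n\omega(\epsilon t)$, hence $\omega(z-w)\geq c_\epsilon\omega(z)$ on the above set, with $c_\epsilon:=1/n$.

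For $I_1$, hypothesis \eqref{16} gives $|V_\varphi u(w)|\leq M_N e^{-N\omega(w)}$; combining this with the rapid decay of $V_\psi\gamma$ via the subadditivity estimate $\omega(z)\leq\omega(w)+\omega(z-w)$, and absorbing an extra factor $e^{-\lambda_0\omega(w)}$ to secure integrability (finite by condition~$(\gamma)$ of Definition~\ref{defweight}), one concludes $I_1(z)\leq K_N e^{-N\omega(z)}$ for every $N>0$. The main obstacle is $I_2$, since on $\R^{2d}\setminus\Gamma$ only the \emph{growth} estimate $|V_\varphi u(w)|\leq c\, e^{\lambda\omega(w)}$ from Theorem~\ref{th24GZ} is available. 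Here I would trade growth in $w$ for decay in $z$: from $\omega(w)\leq\omega(z)+\omega(z-w)$,
\beqsn
|V_\varphi u(w)|\,|V_\psi\gamma(z-w)|\leq c\,C_{N'}\,e^{\lambda\omega(z)}e^{-(N'-\lambda)\omega(z-w)},
\eeqsn
and then, choosing $N'$ large enough that $(N'-\lambda-\lambda_0)c_\epsilon\geq N+\lambda$, the geometric estimate $\omega(z-w)\geq c_\epsilon\omega(z)$ converts part of that exponent into $e^{-(N+\lambda)\omega(z)}$, leaving the integrable residual $e^{-\lambda_0\omega(z-w)}$. This yields $I_2(z)\leq K'_N e^{-N\omega(z)}$, and since $N$ is arbitrary, \eqref{22} follows.
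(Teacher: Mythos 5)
Your proof is correct, and it rests on the same key reduction as the paper's: Proposition~\ref{lemma1133G} with $\gamma=\varphi$ gives exactly the paper's inequality \eqref{21}, $|V_\psi u|\le(2\pi)^{-d}\|\varphi\|_{L^2}^{-2}\,(|V_\varphi u|\ast|V_\psi\varphi|)$. Where you genuinely diverge is in the decomposition of the convolution. The paper splits according to the size of the shift variable, $\langle z'\rangle\le\varepsilon\langle z\rangle$ versus $\langle z'\rangle>\varepsilon\langle z\rangle$, choosing $\varepsilon$ so small that on the near region $z-z'$ stays in $\Gamma$ (this requires $z\in\Gamma'$, $|z|\ge1$), and on the far region converting the decay of $V_\psi\varphi(z')$ into decay in $z$ via $-\omega(z')\le-A_\varepsilon(1+\omega(z))$. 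You instead split exactly by cone membership of the argument of $V_\varphi u$: on $w\in\Gamma$ the hypothesis \eqref{16} applies verbatim, so $I_1(z)$ decays rapidly for \emph{every} $z\in\R^{2d}$ with no geometric choice at all, and all the geometry is concentrated in $I_2$ through the compactness argument giving $|z-w|\ge\epsilon|z|$ for $z\in\Gamma'$, $w\notin\Gamma$, together with $\omega(z-w)\ge c_\epsilon\omega(z)$ — which is, incidentally, precisely the estimate the paper itself uses later for the term $u_2$ in the proof of Theorem~\ref{th35RW} (cf.\ \eqref{30}). Your exponent bookkeeping checks out: in $I_1$, $-N\omega(w)-N\omega(z-w)\le-N\omega(z)$ by subadditivity, with an extra factor $e^{-\lambda_0\omega(\cdot)}$ integrable by condition $(\gamma)$; in $I_2$, choosing $N'$ with $(N'-\lambda-\lambda_0)c_\epsilon\ge N+\lambda$ turns $e^{\lambda\omega(z)}e^{-(N'-\lambda)\omega(z-w)}$ into $e^{-N\omega(z)}e^{-\lambda_0\omega(z-w)}$, and your derivation of $\omega(t)\le n\,\omega(\epsilon t)$ for $n=\lceil 1/\epsilon\rceil$ is sound. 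A small dividend of your split is that the resulting bound is uniform over all of $\Gamma'$, so the restriction $|z|\ge1$ (and the implicit trivial treatment of small $|z|$) in the paper's argument is not needed; the cost is the extra distance lemma, which the paper defers to Theorem~\ref{th35RW}.
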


\begin{proof}
From Proposition~\ref{lemma1133G} we have that
\beqs
\label{21}
|V_\psi u(z)|\leq (2\pi)^{-d}\|\varphi\|^{-2}_{L^2}(|V_\varphi u|\ast|V_\psi\varphi|)(z)
\qquad\forall z\in\R^{2d}.
\eeqs

Moreover, since $\varphi\in\Sch_\omega(\R^{d})$, from Theorem~\ref{th27GZ}
we have that for every $\mu>0$ there exists $C_\mu>0$ such that
\beqs
\label{17}
e^{\mu\omega(z)}|V_\psi\varphi(z)|\leq C_\mu\qquad
\forall z\in\R^{2d}.
\eeqs

Then
\beqs
\nonumber
(|V_\varphi u|\ast|V_\psi\varphi|)(z)&&=\int_{\R^{2d}}|V_\varphi u(z-z')|\cdot
|V_\psi\varphi(z')|dz'\\
\nonumber
&&\leq\int_{\langle z'\rangle\leq\varepsilon\langle z\rangle}|V_\varphi u(z-z')|
\cdot|V_\psi\varphi(z')|dz'
+\int_{\langle z'\rangle>\varepsilon\langle z\rangle}|V_\varphi u(z-z')|
\cdot|V_\psi\varphi(z')|dz'\\
\label{19}
&&=:I_1+I_2.
\eeqs

Let us choose $\varepsilon>0$ sufficiently small so that
\beqsn
z\in\Gamma',\ |z|\geq1,\ \langle z'\rangle\leq\varepsilon\langle z\rangle\quad
\Rightarrow\quad z-z'\in\Gamma,
\eeqsn
and hence, from \eqref{16}, the subadditivity of $\omega$ and \eqref{17}:
\beqs
\nonumber
I_1\leq&&C_\lambda\int_{\langle z'\rangle\leq\varepsilon\langle z\rangle}
e^{-\lambda\omega(z-z')}|V_\psi\varphi(z')|dz'\\
\nonumber
\leq&&C_\lambda e^{-\lambda\omega(z)}\int_{\R^{2d}}e^{(\lambda+1)\omega(z')}
|V_\psi\varphi(z')|e^{-\omega(z')}dz'\\
\label{18}
\leq&&C'_\lambda e^{-\lambda\omega(z)},\qquad\lambda>0,\  z\in\Gamma',\ |z|\geq1.
\eeqs
On the other hand, from Theorem~\ref{th24GZ} and \eqref{17}:
\beqs
\nonumber
I_2\leq&&c\int_{\langle z'\rangle>\varepsilon\langle z\rangle}
e^{\lambda\omega(z-z')}|V_\psi\varphi(z')|dz'\\
\nonumber
\leq&&ce^{\lambda\omega(z)}\int_{\langle z'\rangle>\varepsilon\langle z\rangle}
e^{(\lambda+1-\mu)\omega(z')}|V_\psi\varphi(z')|e^{\mu\omega(z')}e^{-\omega(z')}dz'\\
\label{19bis}
\leq&&c' e^{\lambda\omega(z)}e^{A_\varepsilon(\lambda+1-\mu)\omega(z)}C_\mu
\eeqs
for some $c'>0$,
if  $\mu>\lambda+1$, since
\beqsn
-\omega(z')\leq -A_\varepsilon(1+\omega(z)),
\qquad\mbox{if}\ \langle z'\rangle>\varepsilon\langle z\rangle,
\eeqsn
for some constant $A_\varepsilon>0$ which depends on the already fixed $\varepsilon>0$.

The arbitrariness of $\mu>\lambda+1$ in \eqref{19bis} implies that for every $\lambda'>0$
there exists a constant $C_{\lambda'}>0$ such that
\beqs
\label{20}
I_2\leq C_{\lambda'}e^{-\lambda'\omega(z)},\quad z\in\R^{2d}.
\eeqs
This gives the conclusion.
\end{proof}

Given $\alpha, \beta>0$, consider the \emph{lattice} $\Lambda=\alpha \Z^{d}\times \beta \Z^{d}\subset\R^{2d}.$ For a window $\varphi\in L^{2}(\R^{d})\setminus\{0\}$ the collection $\{\Pi(\sigma)\varphi\}_{\sigma\in\Lambda}$ is called a \emph{Gabor frame} for $L^{2}(\R^{d})$ provided there exists constants $A,B>0$ such that
$$
A\|f\|_{L^{2}}^{2}\le \sum_{\sigma\in\Lambda} |\langle f,\Pi(\sigma)\varphi\rangle|^{2}\le B \|f\|_{L^{2}}^{2},\quad f\in L^{2}(\R^{d})
$$
(see \cite{G} for the analysis of the conditions on $\alpha$ and $\beta$ for which $\{\Pi(\sigma)\varphi\}_{\sigma\in\Lambda}$ is a Gabor frame). Now, we define the Gabor $\omega$-wave front set.
\begin{Def}
\label{def34RW}
Let $\varphi\in\Sch_\omega(\R^d)\setminus\{0\}$ and $\Lambda=\alpha_0\Z^d\times
\beta_0\Z^d\subseteq\R^{2d}$ a lattice with $\alpha_0,\beta_0>0$ sufficiently small so that
$\{\Pi(\sigma)\varphi\}_{\sigma\in\Lambda}$ is a Gabor frame for $L^2(\R^d)$.
If $u\in\Sch'_\omega(\R^d)$, we say that $z_0\in\R^{2d}\setminus\{0\}$ is not in the {\em Gabor $\omega$-wave front set} $\WF^G_\omega(u)$ of $u$ if there exists an open conic
set $\Gamma\subset\R^{2d}\setminus\{0\}$ containing $z_0$ such that
\beqs
\label{23}
\sup_{\sigma\in\Lambda\cap\Gamma}e^{\lambda\omega(\sigma)}|V_\varphi u(\sigma)|
<+\infty\qquad
\forall\lambda>0.
\eeqs
\end{Def}

Our aim is to  prove that $\WF'_\omega(u)=\WF^G_\omega(u)$. We follow the lines of \cite{RW} and  we need some properties of modulation spaces that are already true in $\Sch(\R^{d})$ (see \cite{G}), adapted to our setting.

We consider, for $\lambda\in\R\setminus\{0\}$,
\beqs
\label{21bis}
m_\lambda(z)=e^{\lambda\omega(z)},\qquad
v_\lambda(z)=e^{\vert\lambda\vert \omega(z)},\qquad z\in\R^n.
\eeqs
The weights $m_\lambda(z)$ are $v_\lambda$-moderate, in the sense that
\beqsn
m_\lambda(z_1+z_2)\leq v_\lambda(z_1)m_\lambda(z_2),
\eeqsn
for every $\lambda\neq 0$ and $z_1,z_2\in \R^n$. This is immediate from the subadditivity of $\omega$.

We denote, following \cite{G}, the weighted $L^{p,q}$ spaces by
\beqsn
L^{p,q}_{m_\lambda}(\R^{2d}):=\Big\{&&\!\!\!F\ \mbox{measurable on $\R^{2d}$ such that}\\
&&\|F\|_{L^{p,q}_{m_\lambda}}:=\Big(\int_{\R^{d}}\Big(\int_{\R^{d}}|F(x,\xi)|^pm_\lambda (x,\xi)^pdx\Big)^{q/p}
d\xi\Big)^{1/q}<+\infty\Big\},
\eeqsn
for $1\leq p,q<+\infty$, and
\beqsn
L^{\infty,q}_{m_\lambda}(\R^{2d}):=\Big\{&&\!\!\!F\ \mbox{measurable on $\R^{2d}$ such that}\\
&&\|F\|_{L^{\infty,q}_{m_\lambda}}:=\Big(\int_{\R^{d}}\big(\esssup_{x\in\R^d}|F(x,\xi)|
m_\lambda(x,\xi)\big)^{q}
d\xi\Big)^{1/q}<+\infty\Big\},\\
L^{p,\infty}_{m_\lambda}(\R^{2d}):=\Big\{&&\!\!\!F\ \mbox{measurable on $\R^{2d}$ such that}\\
&&\|F\|_{L^{p,\infty}_{m_\lambda}}:=\esssup_{\xi\in\R^d}
\Big(\int_{\R^{d}}|F(x,\xi)|^pm_\lambda(x,\xi)^p dx\Big)^{1/p}
<+\infty\Big\},
\eeqsn
for $1\leq p,q\leq+\infty$ with $p=+\infty$ or $q=+\infty$ respectively.

By \cite[Lemma 11.1.2]{G} these are Banach spaces for all $1\leq p,q\leq+\infty$. Moreover, for $F\in L^{p,q}_{m_\lambda}(\R^{2d})$ and $H\in L^{p',q'}_{1/{m_\lambda}}(\R^{2d})$, where
$p'$ and $q'$ are the conjugate exponents of $p$ and $q$ respectively
(i.e. $\frac1p+\frac{1}{p'}=1$ if $1<p<+\infty$, $p'=+\infty$ if $p=1$, $p'=1$ if
$p=+\infty$, and the same for $q$), then $F\cdot H\in L^1(\R^{2d})$ and
\beqs
\label{Holder}
\left|\int_{\R^{2d}}F(z)\overline{H(z)}dz\right|\leq\|F\|_{L^{p,q}_{m_\lambda}}
\|H\|_{L^{p',q'}_{1/m_\lambda}}.
\eeqs
If $1\leq p,q<+\infty$, the dual of $L^{p,q}_{m_\lambda}(\R^{2d})$ is given by
$L^{p',q'}_{1/m_\lambda}(\R^{2d})$.

From \cite[Proposition 11.1.3]{G} we have the following Young inequality for weighted $L^{p,q}$ spaces. For $F\in L^{p,q}_{m_\lambda}$ and $G\in L^1_{v_\lambda}$,
\beqs
\label{Young}
\Vert F*G\Vert_{L^{p,q}_{m_\lambda}}\leq C\Vert F\Vert_{L^{p,q}_{m_\lambda}} \Vert G\Vert_{L^1_{v_\lambda}},
\eeqs
for some $C>0$.

\begin{Rem}\label{SLpq}
\begin{em}It is easy to see that for every $\lambda\in\R\setminus\{0\}$ and $1\leq p,q\leq +\infty$ we have
\beqsn
\Sch_\omega(\R^{2d})\subset L^{p,q}_{m_\lambda}(\R^{2d}).
\eeqsn
\end{em}
\end{Rem}

\begin{Def}
\label{defmod}
Let $\varphi\in\Sch_\omega(\R^d)\setminus\{0\}$, and $m_\lambda(z)$ as in \eqref{21bis} for some
$\lambda\neq0$.
For $1\leq p,q\leq+\infty$, the {\em modulation space} $\boldsymbol{M}^{p,q}_{m_\lambda}(\R^d)$
is defined by
\beqsn
\boldsymbol{M}^{p,q}_{m_\lambda}(\R^d):=\{f\in\Sch'_\omega(\R^d):\ V_\varphi f\in
L^{p,q}_{m_\lambda}(\R^{2d})\},
\eeqsn
with norm $\Vert f\Vert_{\boldsymbol{M}^{p,q}_{m_\lambda}}=\Vert V_\varphi f\Vert_{L^{p,q}_{m_\lambda}}$. We denote then $\boldsymbol{M}^{p}_{m_\lambda}(\R^d):=\boldsymbol{M}^{p,p}_{m_\lambda}(\R^d)$.
\end{Def}

Observe that Definition~\ref{defmod} is similar to the definition of modulation spaces in \cite{G}; the difference is that here $\boldsymbol{M}^{p,q}_{m_\lambda}(\R^d)$ is a subset of $\Sch'_\omega(\R^d)$, and we take a window $\varphi\in\Sch_\omega(\R^d)$, while in \cite{G} the modulation space $M^{p,q}_m(\R^d)$ is a subset of $\Sch'(\R^d)$  and the window belongs to $\Sch(\R^d)$ (or a subset of $(M^1_v)^*$ for a suitable weight $v$, in a suitable space of `special' windows $\Sch_{\mathcal{C}}(\R^d)$). Moreover, here we always need weights of exponential type. We refer to \cite{T-1,T-2} for modulation spaces in the setting of Gelfand-Shilov spaces, among other type of spaces of ultradifferentiable functions and ultradistributions.

The definition of $\boldsymbol{M}^{p,q}_{m_\lambda}$ is independent of the window $\varphi$, in the sense that different (non-zero) windows in $\Sch_\omega(\R^d)$ give equivalent norms. Indeed for $\varphi,\psi\in\Sch_\omega(\R^d)$, $\varphi,\psi\neq 0$, we have from Proposition~\ref{lemma1133G}, applied with $\gamma=\psi$, that
\beqs
\label{prop1132(c)G}
\Vert V_\varphi f\Vert_{L^{p,q}_{m_\lambda}}\leq \frac{1}{(2\pi)^d \Vert \psi\Vert_{L^2}^2} \Vert \vert V_\psi f\vert * \vert V_\varphi\psi\vert\Vert_{L^{p,q}_{m_\lambda}}\leq C\Vert V_\psi f\Vert_{L^{p,q}_{m_\lambda}},
\eeqs
where $C=\frac{\Vert V_\varphi\psi\Vert_{L^1_{v_\lambda}}}{(2\pi)^d \Vert \psi
\Vert_{L^2}^2}$, as we can deduce from Young inequality \eqref{Young} (observe that $C$ is finite by Proposition~\ref{propcontS} and Remark~\ref{SLpq}). Then, by interchanging the roles of $\varphi$ and $\psi$ we have that $V_\varphi f\in L^{p,q}_{m_\lambda}$ if and only if $V_\psi f\in L^{p,q}_{m_\lambda}$, and the corresponding modulation space norms of $f$ with respect to the two windows are equivalent.

\begin{Rem}
\begin{em}
\label{form25RW}
From Theorems~\ref{th27GZ} and \ref{th24GZ} and Proposition~\ref{cor1127G}
we have that
\beqsn
\Sch_\omega(\R^d)=\bigcap_{\lambda>0}\boldsymbol{M}^\infty_{m_\lambda}(\R^d);
\qquad\Sch'_\omega(\R^d)=\bigcup_{\lambda<0}\boldsymbol{M}^\infty_{m_\lambda}(\R^d).
\eeqsn
\end{em}
\end{Rem}

The inversion formula of Proposition~\ref{cor1127G} holds also in modulation spaces, as stated in the following result.

\begin{Prop}
\label{prop1132G}
Let $\gamma\in\Sch_\omega(\R^d)$ be a not identically zero window, and
consider, for a measurable function
$F$ on $\R^{2d}$, the adjoint $V_\gamma^*F$ defined as in \eqref{Ad1}.
Then:
\begin{itemize}
\item[(i)]
The operator $V^*_\gamma$ acts continuously as
\beqsn
V^*_\gamma:L^{p,q}_{m_\lambda}(\R^{2d})\to \boldsymbol{M}^{p,q}_{m_\lambda}(\R^d),
\eeqsn
and there exists $C>0$ such that
\beqsn
\Vert V^*_\gamma F\Vert_{\boldsymbol{M}^{p,q}_{m_\lambda}}\leq C\Vert V_\varphi \gamma\Vert_{L^1_{v_\lambda}} \Vert F\Vert_{L^{p,q}_{m_\lambda}},
\eeqsn
where $\varphi$ is the window in the corresponding $\boldsymbol{M}^{p,q}_{m_\lambda}$ norm.
\item[(ii)]
In the particular case when $F=V_g f$, for $g\in\Sch_\omega(\R^d)$, and $f\in \boldsymbol{M}^{p,q}_{m_\lambda}$, if $\langle\gamma,g\rangle\neq 0$ the following {\em inversion formula} holds:
\beqsn
f=\frac{1}{(2\pi)^d\langle\gamma,g\rangle} \int_{\R^{2d}} V_g f(z) \Pi(z)\gamma\,dz.
\eeqsn
\end{itemize}
\end{Prop}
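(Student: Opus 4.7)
The plan is to follow the standard time-frequency analysis template: realize $V^*_\gamma F$ as an element of $\Sch'_\omega(\R^d)$ by weak duality, compute its STFT with respect to the window $\varphi$, dominate that STFT pointwise by a convolution, and apply Young's inequality in the weighted mixed-norm spaces. For part $(i)$, I would first give $V^*_\gamma F$ a meaning through
\[
\langle V^*_\gamma F, h\rangle := \int_{\R^{2d}} F(z)\, \overline{V_\gamma h(z)}\, dz, \qquad h \in \Sch_\omega(\R^d),
\]
which converges absolutely by the weighted Hölder inequality \eqref{Holder}, because $V_\gamma h \in \Sch_\omega(\R^{2d}) \subseteq L^{p',q'}_{1/m_\lambda}(\R^{2d})$ by Proposition \ref{propcontS} combined with Remark \ref{SLpq}. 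Continuity of the functional $h \mapsto \langle V^*_\gamma F, h\rangle$ on $\Sch_\omega(\R^d)$ is then an immediate consequence of the equivalent STFT seminorms supplied by Proposition \ref{cor1126G}.

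The next step, which is the heart of $(i)$, is to compute
\[
V_\varphi(V^*_\gamma F)(w) = \langle V^*_\gamma F, \Pi(w)\varphi\rangle = \int_{\R^{2d}} F(z)\, \langle \Pi(z)\gamma, \Pi(w)\varphi\rangle\, dz,
\]
and to use the standard covariance identity $\Pi(w)^*\Pi(z) = c(w,z)\,\Pi(z-w)$ with $|c(w,z)|=1$ to deduce the pointwise identity $|\langle \Pi(z)\gamma, \Pi(w)\varphi\rangle| = |V_\varphi \gamma(w-z)|$. This immediately yields
\[
|V_\varphi(V^*_\gamma F)(w)| \leq \bigl(|F| \ast |V_\varphi \gamma|\bigr)(w),
\]
and the Young inequality \eqref{Young}, applied with $|F| \in L^{p,q}_{m_\lambda}$ and $|V_\varphi \gamma| \in L^1_{v_\lambda}$ (finite because $V_\varphi \gamma \in \Sch_\omega(\R^{2d})$ by Proposition \ref{propcontS}, together with Remark \ref{SLpq}), simultaneously certifies $V^*_\gamma F \in \boldsymbol{M}^{p,q}_{m_\lambda}$ and produces the stated norm estimate with $C$ depending only on $\varphi$.

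Part $(ii)$ then reduces to Proposition \ref{cor1127G}(b) applied with $\psi = g$: since $\boldsymbol{M}^{p,q}_{m_\lambda}(\R^d) \subseteq \Sch'_\omega(\R^d)$ by definition, every $f \in \boldsymbol{M}^{p,q}_{m_\lambda}$ is an $\omega$-tempered distribution, and the reproducing formula of Proposition \ref{cor1127G}(b) delivers the desired equality in $\Sch'_\omega$; part $(i)$ additionally ensures that the right-hand side integral is a bona fide element of $\boldsymbol{M}^{p,q}_{m_\lambda}$, so the equality also holds in the modulation space norm. The only step that is not pure bookkeeping is the pointwise identity $|\langle \Pi(z)\gamma, \Pi(w)\varphi\rangle| = |V_\varphi \gamma(w-z)|$, but this is a classical commutation calculation for the Schrödinger representation, and all the deeper technical input—Young, Hölder, continuity of $V_\varphi$, and the distributional inversion formula—has already been set up in Section \ref{sec2}.
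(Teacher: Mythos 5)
Your proposal is correct and follows essentially the same route as the paper's own proof: you define $V^*_\gamma F$ weakly and certify it lies in $\Sch'_\omega(\R^d)$ via the H\"older inequality \eqref{Holder} and the STFT seminorms of Proposition \ref{cor1126G}, you derive the pointwise domination $|V_\varphi(V^*_\gamma F)|\leq |F|\ast|V_\varphi\gamma|$ from the covariance identity and conclude with Young's inequality \eqref{Young}, and for part (ii) you invoke the $\Sch'_\omega$-inversion formula (you cite Proposition \ref{cor1127G}(b), the paper cites \eqref{Ad2}, which is the same fact). The only detail you leave tacit is that $f\in\boldsymbol{M}^{p,q}_{m_\lambda}$ (defined through the fixed window $\varphi$) implies $V_g f\in L^{p,q}_{m_\lambda}$ for the new window $g$, which is precisely the window-change estimate \eqref{prop1132(c)G} that the paper records explicitly at the start of its proof of (ii).
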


\begin{proof}

{\rm (i)} We start by proving that $V^*_\gamma F$ is an element of $\Sch^\prime_\omega(\R^d)$. For $\psi\in\Sch_\omega(\R^d)$ we have from \eqref{Holder},
\beqsn
\vert\langle V^*_\gamma F,\psi\rangle\vert &=& \vert\langle F,V_\gamma \psi\rangle\vert \leq \Vert F\Vert_{L^{p,q}_{m_\lambda}} \Vert V_\gamma\psi\Vert_{L^{p^\prime,q^\prime}_{1/m_\lambda}} \\
&\leq& \Vert F\Vert_{L^{p,q}_{m_\lambda}} \Vert e^{\mu\omega(z)} V_\gamma\psi\Vert_\infty \Vert e^{-\mu\omega(z)}\Vert_{L^{p^\prime,q^\prime}_{1/m_\lambda}};
\eeqsn
this expression is finite for $\mu>0$ sufficiently large, as we can deduce from Theorem~\ref{th27GZ}(ii) and Definition~\ref{defweight}. Then from Proposition~\ref{cor1126G} we have that $V^*_\gamma F$ is a well defined element of $\Sch^\prime_\omega(\R^{2d})$. From Theorem~\ref{th24GZ} we have that $V_\varphi V^*_\gamma F$ is a continuous function; it is explicitly given by
\beqsn
V_\varphi V^*_\gamma F(z) &=& \langle V^*_\gamma F,\Pi(z)\varphi\rangle = \int_{\R^{2d}}F(y,\eta) \overline{V_\gamma(\Pi(z)\varphi)(y,\eta)}\,dy\,d\eta.
\eeqsn
Writing $z=(x,\xi)$ we have
\beqsn
\vert V_\varphi V^*_\gamma F(x,\xi)\vert &=& \left\vert \int_{\R^{2d}}F(y,\eta) V_\varphi\gamma(x-y,\xi-\eta) e^{- i \langle y,\xi-\eta\rangle}\,dy\,d\eta\right\vert \\
&\leq& (\vert F\vert * \vert V_\varphi\gamma\vert)(x,\xi).
\eeqsn
Then, from Young inequality \eqref{Young} we obtain
\beqs
\label{68}
\Vert V^*_\gamma F\Vert_{\boldsymbol{M}^{p,q}_{m_\lambda}} = \Vert V_\varphi V^*_\gamma F\Vert_{L^{p,q}_{m_\lambda}} \leq C \Vert F\Vert_{L^{p,q}_{m_\lambda}} \Vert V_\varphi\gamma\Vert_{L^1_{v_\lambda}},
\eeqs
and this expression is finite since $V_\varphi\gamma\in\Sch_\omega(\R^{2d})\subset L^1_{v_\lambda}(\R^{2d})$ for every $\lambda\in\R$ from Remark~\ref{SLpq}.

{\rm (ii)} We first observe that, by \eqref{prop1132(c)G}, $V_g f\in L^{p,q}_{m_\lambda}$. Then, from point (i), $\tilde{f}=\frac{1}{(2\pi)^d \langle\gamma,g\rangle} V^*_\gamma V_g f\in \boldsymbol{M}^{p,q}_{m_\lambda}$. Since $\boldsymbol{M}^{p,q}_{m_\lambda}\subset\Sch^\prime_\omega$,  we have that $\tilde{f}=f$ by
\eqref{Ad2}.
\end{proof}

\begin{Th}\label{Th1136G}
Let $1\leq p,q<\infty$. We have
\beqsn
(\boldsymbol{M}^{p,q}_{m_\lambda})^*=\boldsymbol{M}^{p^\prime,q^\prime}_{1/m_\lambda},
\eeqsn
and the duality is given by
\beqsn
\langle f,h\rangle = \int_{\R^{2d}} V_\varphi f(z) \overline{V_\varphi h(z)}\,dz
\eeqsn
for $f\in\boldsymbol{M}^{p,q}_{m_\lambda}$ and $h\in \boldsymbol{M}^{p^\prime,q^\prime}_{1/m_\lambda}$.
\end{Th}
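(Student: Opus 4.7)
This is the standard duality for modulation spaces (cf.\ \cite{G}), adapted to the ultradifferentiable setting. The overall strategy is to use the STFT to embed $\boldsymbol{M}^{p,q}_{m_\lambda}(\R^d)$ isometrically as a closed subspace of $L^{p,q}_{m_\lambda}(\R^{2d})$ and then transport the known duality $(L^{p,q}_{m_\lambda})^{*}=L^{p',q'}_{1/m_\lambda}$ back. I fix once and for all a window $\varphi\in\Sch_\omega(\R^d)$ with $\|\varphi\|_{L^2}=1$, so that \eqref{Ad9} becomes $V_\varphi^{*}V_\varphi=(2\pi)^d I$.

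\textbf{Easy inclusion.} For $h\in\boldsymbol{M}^{p',q'}_{1/m_\lambda}$, the map $\ell_h(f):=\int V_\varphi f\,\overline{V_\varphi h}\,dz$ is a bounded linear functional on $\boldsymbol{M}^{p,q}_{m_\lambda}$ by the weighted H\"older inequality \eqref{Holder}:
\[
|\ell_h(f)|\le \|V_\varphi f\|_{L^{p,q}_{m_\lambda}}\|V_\varphi h\|_{L^{p',q'}_{1/m_\lambda}}
=\|f\|_{\boldsymbol{M}^{p,q}_{m_\lambda}}\|h\|_{\boldsymbol{M}^{p',q'}_{1/m_\lambda}}.
\]
The map $h\mapsto\ell_h$ is injective, since $\ell_h=0$ implies $V_\varphi h=0$ a.e., and then $h=0$ by the inversion formula of Proposition~\ref{prop1132G}(ii).

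\textbf{Hard direction.} Given $\ell\in(\boldsymbol{M}^{p,q}_{m_\lambda})^{*}$, define $T:=(2\pi)^{-d}V_\varphi^{*}:L^{p,q}_{m_\lambda}(\R^{2d})\to\boldsymbol{M}^{p,q}_{m_\lambda}(\R^d)$, which is bounded by Proposition~\ref{prop1132G}(i); hence $\ell\circ T\in (L^{p,q}_{m_\lambda})^{*}$. Since $1\le p,q<\infty$, the standard weighted duality provides $H\in L^{p',q'}_{1/m_\lambda}$ with $(\ell\circ T)(F)=\int F\,\overline{H}\,dz$ for every $F\in L^{p,q}_{m_\lambda}$. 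Taking $F=V_\varphi f$ and using the reconstruction identity $TV_\varphi f=f$ (which is \eqref{Ad9}), I obtain
\[
\ell(f)=\int_{\R^{2d}} V_\varphi f(z)\,\overline{H(z)}\,dz, \qquad f\in \boldsymbol{M}^{p,q}_{m_\lambda}.
\]
Now set $h:=TH=(2\pi)^{-d}V_\varphi^{*}H$; by Proposition~\ref{prop1132G}(i) applied to the weight $1/m_\lambda$ and exponents $(p',q')$, one has $h\in\boldsymbol{M}^{p',q'}_{1/m_\lambda}$ with $\|h\|_{\boldsymbol{M}^{p',q'}_{1/m_\lambda}}\lesssim \|H\|_{L^{p',q'}_{1/m_\lambda}}\asymp\|\ell\|$.

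\textbf{Main obstacle.} It remains to prove the key identity
\[
\int_{\R^{2d}} V_\varphi f(z)\,\overline{H(z)}\,dz=\int_{\R^{2d}} V_\varphi f(z)\,\overline{V_\varphi h(z)}\,dz,
\]
i.e.\ that the component of $H$ outside the range of $V_\varphi$ does not contribute. This is the reproducing-kernel property: setting $P:=(2\pi)^{-d}V_\varphi V_\varphi^{*}$ on $L^{p,q}_{m_\lambda}$, one computes $PV_\varphi f=(2\pi)^{-d}V_\varphi V_\varphi^{*}V_\varphi f=V_\varphi f$ by \eqref{Ad9}, while on the duality bracket $P$ behaves as its own adjoint because of the adjoint relation $\langle V_\varphi^{*}F,g\rangle=\langle F,V_\varphi g\rangle$ (the displayed line after \eqref{Ad1}). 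The plan is to verify the identity first on the dense subspace $\Sch_\omega(\R^d)\hookrightarrow\boldsymbol{M}^{p,q}_{m_\lambda}$, where the adjoint relation applies literally and the self-adjointness of $P$ under the $L^{p,q}_{m_\lambda}$--$L^{p',q'}_{1/m_\lambda}$ pairing follows by a Fubini argument, and then to extend by continuity of both sides in $f$. Density of $\Sch_\omega$ in $\boldsymbol{M}^{p,q}_{m_\lambda}$ for $1\le p,q<\infty$ (which can be obtained from Proposition~\ref{prop1132G}(i) by truncating and mollifying the STFT) is the main auxiliary fact; this is the only step where the assumption $p,q<\infty$ is genuinely used. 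The norm equivalence $\|\ell\|\asymp\|h\|_{\boldsymbol{M}^{p',q'}_{1/m_\lambda}}$ then follows from combining the two one-sided bounds already collected.
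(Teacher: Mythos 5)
Your proposal is correct and is essentially the paper's own proof: the paper simply remarks that the argument ``relies on the duality of weighted $L^{p,q}$ spaces, and it is the same as in Theorem 11.3.6 of \cite{G}'', and what you wrote is precisely that Gr\"ochenig argument, transported via $V_\varphi$ and $(2\pi)^{-d}V_\varphi^*$ with the paper's Proposition~\ref{prop1132G} and the density statement (which the paper records separately as Proposition~\ref{Prop1134G}, proved by truncating $V_\varphi f$ exactly as you suggest). The only cosmetic remark is that your reproducing-kernel step can be shortened: for $f\in\Sch_\omega$ the weak definition of $V_\varphi^*$ together with \eqref{Ad21} gives $\int V_\varphi f\,\overline{V_\varphi h}\,dz=(2\pi)^d\langle f,h\rangle=\int V_\varphi f\,\overline{H}\,dz$ directly, without introducing the projection $P$.
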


\begin{proof}
The proof of this result relies on the duality of weighted $L^{p,q}$ spaces, and it is the same as in Theorem 11.3.6 of \cite{G}.
\end{proof}

\begin{Prop}\label{Prop1134G}
For $1\leq p,q<\infty$ we have that $\Sch_\omega(\R^d)$ is a dense subspace of $\boldsymbol{M}^{p,q}_{m_\lambda}$.
\end{Prop}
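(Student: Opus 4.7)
The plan is to first note that $\Sch_\omega(\R^d) \subset \boldsymbol{M}^{p,q}_{m_\lambda}(\R^d)$: if $f\in\Sch_\omega(\R^d)$ then $V_\varphi f\in\Sch_\omega(\R^{2d})$ by Theorem~\ref{th27GZ}, and hence $V_\varphi f\in L^{p,q}_{m_\lambda}(\R^{2d})$ by Remark~\ref{SLpq}. For density, the idea is to use the inversion formula to recover $f$ from its STFT, truncate the STFT, and verify that the truncated version produces an element of $\Sch_\omega$ through the adjoint $V_\varphi^{*}$.

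Fix $\varphi\in\Sch_\omega(\R^d)\setminus\{0\}$ and $f\in\boldsymbol{M}^{p,q}_{m_\lambda}(\R^d)$. Set $F:=V_\varphi f\in L^{p,q}_{m_\lambda}(\R^{2d})$. By Proposition~\ref{prop1132G}(ii) applied with $\gamma=g=\varphi$ (so $\langle\gamma,g\rangle=\|\varphi\|_{L^2}^2\neq0$),
\[
f=\frac{1}{(2\pi)^d\|\varphi\|_{L^2}^{2}}\,V_\varphi^{*} F.
\]
The next step is to approximate $F$ in $L^{p,q}_{m_\lambda}$-norm by a sequence of bounded, compactly supported measurable functions $F_n$. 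This standard truncation, say $F_n(z):=F(z)\,\mathbf{1}_{\{|z|\le n,\ |F(z)|\le n\}}$, satisfies $F_n\to F$ pointwise a.e.\ and $|F_n|\le|F|$; dominated convergence applied successively in the inner $L^p$ and outer $L^q$ variable (using that $p,q<\infty$) yields $\|F-F_n\|_{L^{p,q}_{m_\lambda}}\to 0$.

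Define $f_n:=\tfrac{1}{(2\pi)^d\|\varphi\|_{L^2}^{2}}\,V_\varphi^{*} F_n$. Since $F_n$ is essentially bounded and supported in a ball $B_n$, on $B_n$ the weight $e^{-\mu\omega(z)}$ is bounded below by a positive constant $c_{n,\mu}$ for every $\mu>0$, so $|F_n(z)|\le (\|F_n\|_\infty/c_{n,\mu})\,e^{-\mu\omega(z)}$ on $\R^{2d}$. Therefore $F_n$ fulfills the hypothesis of Proposition~\ref{prop26GZ}, which gives $f_n\in\Sch_\omega(\R^d)$. Finally, by the continuity statement in Proposition~\ref{prop1132G}(i),
\[
\|f-f_n\|_{\boldsymbol{M}^{p,q}_{m_\lambda}}
=\frac{1}{(2\pi)^d\|\varphi\|_{L^2}^{2}}\,\|V_\varphi^{*}(F-F_n)\|_{\boldsymbol{M}^{p,q}_{m_\lambda}}
\le C\,\|V_\varphi\varphi\|_{L^1_{v_\lambda}}\,\|F-F_n\|_{L^{p,q}_{m_\lambda}}\longrightarrow 0,
\]
with $\|V_\varphi\varphi\|_{L^1_{v_\lambda}}<\infty$ by Proposition~\ref{propcontS} and Remark~\ref{SLpq}. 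This completes the density argument.

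The only step that requires care is the truncation of $F$: one has to confirm that bounded, compactly supported functions are dense in $L^{p,q}_{m_\lambda}$ when $1\le p,q<\infty$. The mixed-norm dominated convergence sketched above handles this; an alternative is to identify $L^{p,q}_{m_\lambda}$ with $L^{p,q}$ via the multiplication isomorphism $G\mapsto G/m_\lambda$ (well-defined because $m_\lambda$ is continuous and strictly positive) and invoke the classical density of simple functions with compact support in $L^{p,q}$. Everything else is a routine application of the already established adjoint theory and the inversion formula.
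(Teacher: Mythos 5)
Your proof is correct and follows essentially the same route as the paper: the paper likewise truncates $F=V_\varphi f$ to the balls $K_n=\{|z|\le n\}$, applies Proposition~\ref{prop26GZ} to conclude $f_n=V_\varphi^*F_n\in\Sch_\omega(\R^d)$, and uses the boundedness of $V_\varphi^*$ from Proposition~\ref{prop1132G} to get convergence in $\boldsymbol{M}^{p,q}_{m_\lambda}$ (the paper normalizes $\|\varphi\|_{L^2}^2=(2\pi)^{-d}$ to absorb the constant, and your additional truncation in the value $|F(z)|\le n$ is harmless but superfluous, since $V_\varphi f$ is continuous by Theorem~\ref{th24GZ} and hence already bounded on $K_n$).
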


\begin{proof}
We first observe that, from property $(\gamma)$ of the weight function $\omega$ (see Definition \ref{defweight}) we have that, for $\mu>0$, $e^{-\mu\omega(z)}\in L^{p,q}_{m_\lambda}$. Hence, for every $f\in\Sch_\omega(\R^d)$ we obtain
\beqsn
\Vert f\Vert_{\boldsymbol{M}^{p,q}_{m_\lambda}} = \Vert V_\varphi f\Vert_{L^{p,q}_{m_\lambda}}\leq \Vert V_\varphi f(z) e^{\mu\omega(z)}\Vert_\infty \Vert e^{-\mu\omega(z)}\Vert_{L^{p,q}_{m_\lambda}}.
\eeqsn
From Proposition \ref{cor1126G} we have
\beqsn
\Sch_\omega(\R^d)\subset \boldsymbol{M}^{p,q}_{m_\lambda},
\eeqsn
with continuous inclusion. It remains to prove the density. We denote by $K_n:=\{ z\in\R^{2d} : \vert z\vert\leq n\}$, and we fix $\varphi\in\Sch_\omega$ with $\Vert \varphi\Vert_{L^2}^2=(2\pi)^{-d}$. Consider $f\in\boldsymbol{M}^{p,q}_{m_\lambda}$ and define
\beqsn
F_n = V_\varphi f\cdot \chi_{K_n}\quad\text{and}\quad f_n = V^*_\varphi F_n.
\eeqsn
From Proposition \ref{prop26GZ} we have that $f_n\in\Sch_\omega(\R^d)$. Moreover,
using \eqref{Ad2} and Proposition \ref{prop1132G} we obtain
\beqsn
\Vert f_n-f\Vert_{\boldsymbol{M}^{p,q}_{m_\lambda}} &=& \Vert V^*_\varphi F_n - V^*_\varphi V_\varphi f\Vert_{\boldsymbol{M}^{p,q}_{m_\lambda}} \\
&\leq& C \Vert F_n - V_\varphi f\Vert_{L^{p,q}_{m_\lambda}} \\
&=& C \Vert V_\varphi f\Vert_{L^{p,q}_{m_\lambda}(\R^{2d}\setminus K_n)}.
\eeqsn
So, $\Vert f_n-f\Vert_{\boldsymbol{M}^{p,q}_{m_\lambda}}$ tends to $0$ for $n\to \infty$, which finishes the proof.
\end{proof}

We recall now from \cite{G} some basic facts about {\em amalgam spaces}.
\begin{Def}
\label{def1113G}
We indicate with $\ell^{p,q}_{m_\lambda}(\Z^{2d})$ the space of all sequences $(a_{kn})_{k,n\in\Z^d}$, with $a_{kn}\in\C$ for every $k,n\in\Z^d$, such that the following norm is finite
\beqsn
\Vert a\Vert_{\ell^{p,q}_{m_\lambda}} = \biggl(\sum_{n\in\Z^d}\biggl(\sum_{k\in\Z^d} \vert a_{kn}\vert^p m_\lambda(k,n)^p\biggr)^{q/p}\biggr)^{1/q}.
\eeqsn
\end{Def}

\begin{Def}
\label{def1114G}
Let $F$ be a measurable function on $\R^{2d}$, and define
\beqsn
a_{kn}=\esssup_{(x,\xi)\in[0,1]^{2d}} \vert F(k+x,n+\xi)\vert.
\eeqsn
We say that $F\in W(L^{p,q}_{m_\lambda})$ if the sequence $a=(a_{kn})_{k,n\in\Z^d}$ belongs to $\ell^{p,q}_{m_\lambda}(\Z^{2d})$. The space $W(L^{p,q}_{m_\lambda})$ is called {\em amalgam space}, and has the norm defined by
\beqsn
\Vert F\Vert_{W(L^{p,q}_{m_\lambda})} = \Vert a \Vert_{\ell^{p,q}_{m_\lambda}}.
\eeqsn
\end{Def}

Let $\varphi\in\Sch_\omega(\R^d)$ and $\Lambda=\alpha_0\Z^d\times\beta_0\Z^d$ a
lattice with $\alpha_0,\beta_0>0$ sufficiently small so that
$\{\Pi(\sigma)\varphi\}_{\sigma\in\Lambda}$ is a Gabor frame for $L^2(\R^d)$. We indicate with $\widetilde{m}_\lambda$ the restriction of the weight \eqref{21bis} to the lattice $\Lambda$, in the sense that
\beqsn
\widetilde{m}_\lambda(k,n):=m_\lambda(\alpha_0 k,\beta_0 n).
\eeqsn
We recall the following result (see Proposition 11.1.4 of \cite{G}).
\begin{Prop}
\label{prop1114G}
Let $F\in W(L^{p,q}_{m_\lambda})$ be a continuous function, and $\alpha_0,\beta_0>0$. Then $F\vert_\Lambda\in \ell^{p,q}_{\widetilde{m}_\lambda}$, and there exists a constant $C=C(\alpha_0,\beta_0,\lambda)$ such that
\beqsn
\Vert F\vert_{\Lambda}\Vert_{\ell^{p,q}_{\widetilde{m}_\lambda}}\leq C\Vert F\Vert_{W(L^{p,q}_{m_\lambda})}.
\eeqsn
\end{Prop}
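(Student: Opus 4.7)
The plan is to reduce the discrete $\ell^{p,q}_{\widetilde m_\lambda}(\Lambda)$--norm of $F|_\Lambda$ to the continuous $W(L^{p,q}_{m_\lambda})$--norm via two elementary ingredients: (a) a partition of $\R^{2d}$ into unit cubes indexed by $\Z^{2d}$ together with a bookkeeping of how many lattice points of $\Lambda=\alpha_0\Z^d\times\beta_0\Z^d$ fall into each cube, and (b) the $v_\lambda$--moderateness of $m_\lambda$ to transfer the weight from lattice points to integer points.

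First, for each $k\in\Z^d$ I would write $\alpha_0 k=k'+r$ with $k'\in\Z^d$ and $r\in[0,1)^d$, and analogously $\beta_0 n=n'+s$ with $n'\in\Z^d$, $s\in[0,1)^d$. Since $F$ is continuous, the definition of the amalgam sequence $a_{k',n'}=\esssup_{(x,\xi)\in[0,1]^{2d}}|F(k'+x,n'+\xi)|$ gives the pointwise control
\[
|F(\alpha_0 k,\beta_0 n)|\leq a_{k'(k),\,n'(n)}.
\]
Second, by subadditivity of $\omega$ we have $m_\lambda(\alpha_0 k,\beta_0 n)\leq v_\lambda(r,s)\,m_\lambda(k',n')\leq C_\lambda\, m_\lambda(k',n')$, where $C_\lambda:=\sup_{(r,s)\in[0,1]^{2d}}v_\lambda(r,s)<+\infty$. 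Finally, the map $k\mapsto k'(k)$ has fibers of cardinality at most $N_\alpha:=\lceil 1/\alpha_0\rceil^d$, and analogously $n\mapsto n'(n)$ has fibers of cardinality at most $N_\beta:=\lceil 1/\beta_0\rceil^d$.

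Combining these, for each fixed $n$ (hence fixed $n'=n'(n)$),
\[
\sum_{k\in\Z^d}|F(\alpha_0 k,\beta_0 n)|^p\,\widetilde m_\lambda(k,n)^p
\leq C_\lambda^{p}\,N_\alpha\sum_{k'\in\Z^d}a_{k',n'}^{p}\,m_\lambda(k',n')^{p}.
\]
Raising to the power $q/p$, summing over $n\in\Z^d$, and using the analogous fiber bound in the second variable gives
\[
\sum_{n\in\Z^d}\Big(\sum_{k}|F(\alpha_0 k,\beta_0 n)|^p\,\widetilde m_\lambda(k,n)^p\Big)^{q/p}
\leq C_\lambda^{q}\,N_\alpha^{q/p}\,N_\beta\sum_{n'}\Big(\sum_{k'}a_{k',n'}^{p}\,m_\lambda(k',n')^{p}\Big)^{q/p},
\]
which is exactly $C_\lambda^{q}N_\alpha^{q/p}N_\beta\,\|F\|_{W(L^{p,q}_{m_\lambda})}^{q}$. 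Taking $q$--th roots yields the desired inequality with $C=C(\alpha_0,\beta_0,\lambda)=C_\lambda\,N_\alpha^{1/p}\,N_\beta^{1/q}$. The cases $p=\infty$ or $q=\infty$ are handled by replacing the corresponding sum with an $\esssup$ (which becomes a $\sup$ over a finite fiber times a $\sup$ over $\Z^d$), with no extra difficulty.

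The only subtlety, and the step where I would be most careful, is the interplay between the mixed $\ell^{p,q}$ ordering and the lattice/cube counting: one must count in the correct order (first over $k$ with $n$ frozen, then over $n$) so that the constants $N_\alpha,N_\beta$ do not get tangled with the exponents $p,q$. The adjustments for the endpoints $p,q\in\{\infty\}$ and the verification that $C_\lambda<+\infty$ (which uses only continuity of $\omega$ on $[0,1]^{2d}$) are routine.
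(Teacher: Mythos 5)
Your proposal is correct, and it matches the intended argument: the paper itself gives no proof, deferring to Proposition 11.1.4 of \cite{G}, and your reasoning is precisely that standard one --- assign each lattice point $(\alpha_0 k,\beta_0 n)$ to the unit cube containing it (a map whose fibers have at most $\lceil 1/\alpha_0\rceil^d$, resp.\ $\lceil 1/\beta_0\rceil^d$, elements per variable), use the continuity of $F$ to dominate the point value by the amalgam sequence entry $a_{k'n'}$, and use the $v_\lambda$-moderateness of $m_\lambda$ with $C_\lambda=\sup_{[0,1]^{2d}}v_\lambda<+\infty$ to transfer the weight. Your care with the order of summation in the mixed norm (inner sum in $k$ at fixed $n$, then outer sum in $n$, yielding $C=C_\lambda N_\alpha^{1/p}N_\beta^{1/q}$) and your remark on the endpoint cases $p=\infty$ or $q=\infty$ are exactly the right bookkeeping, so there is nothing to add.
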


Now, we study the {\em Gabor frame operator} associated to the lattice $\Lambda$, given by
\beqs
\label{40}
S_{\varphi,\psi}f=\sum_{\sigma\in\Lambda}\langle f,\Pi(\sigma)\varphi\rangle
\Pi(\sigma)\psi,
\eeqs
for $\varphi,\psi,f\in L^2(\R^d)$.

We write as usual $S_{\varphi,\psi}=D_\psi C_\varphi$, where $C_\varphi$ is the `analysis' operator, acting on a function $f$ as
\beqs
\label{41}
C_\varphi f=\langle f,\Pi(\sigma)\varphi\rangle,\qquad \sigma\in\Lambda,
\eeqs
and $D_\psi$ is the `synthesis' operator, acting on a sequence $c=(c_{kn})_{k,n\in\Z^d}$ as
\beqs
\label{42}
D_\psi c = \sum_{k,n\in\Z^d} c_{kn}\Pi(\alpha_0 k,\beta_0 n)\psi.
\eeqs
We analyse the action of the previous operators on the modulation spaces $\boldsymbol{M}^{p,q}_{m_\lambda}$. The proofs of the next two results are very similar to \cite[Thms. 12.2.3, 12.2.4]{G}, so we omit them.
We just remark that, since $\varphi\in\Sch_\omega\subset\Sch$, we have that $V_\varphi\varphi\in\Sch$; then by Proposition 12.1.11 of \cite{G} we have $V_\varphi\varphi\in W(L^1_{v_\lambda})$, and so we can apply Theorem 11.1.5 of \cite{G}.

\begin{Th}
\label{th1223G}
Let $\varphi\in\Sch_\omega(\R^d)$ and $\Lambda$ a lattice as before. Then the operator
\beqsn
C_\varphi:\boldsymbol{M}^{p,q}_{m_\lambda}(\R^d)\longrightarrow \ell^{p,q}_{\widetilde{m}_\lambda}(\Z^{2d})
\eeqsn
is bounded for every $\lambda\in \R\setminus\{0\}$, $\alpha_0,\beta_0>0$, and $1\leq p,q\leq\infty$.
\end{Th}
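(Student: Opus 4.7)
The plan is to follow closely the template of Theorem 12.2.3 in \cite{G}, which is what the authors explicitly refer to. The key observation is that for $\sigma=(\alpha_0 k,\beta_0 n)\in\Lambda$ the analysis operator coincides with the sampled STFT, namely
\beqsn
(C_\varphi f)(k,n)=\langle f,\Pi(\alpha_0 k,\beta_0 n)\varphi\rangle=V_\varphi f(\alpha_0 k,\beta_0 n),
\eeqsn
so that $\|C_\varphi f\|_{\ell^{p,q}_{\widetilde m_\lambda}}=\|V_\varphi f|_{\Lambda}\|_{\ell^{p,q}_{\widetilde m_\lambda}}$. The goal therefore reduces to controlling the restriction of $V_\varphi f$ to $\Lambda$ by the full $\boldsymbol{M}^{p,q}_{m_\lambda}$-norm of $f$.

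First I would use the sampling inequality of Proposition~\ref{prop1114G}: provided $V_\varphi f$ lies in the amalgam space $W(L^{p,q}_{m_\lambda})$, one has
\beqsn
\|V_\varphi f|_{\Lambda}\|_{\ell^{p,q}_{\widetilde m_\lambda}}\leq C(\alpha_0,\beta_0,\lambda)\,\|V_\varphi f\|_{W(L^{p,q}_{m_\lambda})}.
\eeqsn
So the problem is further reduced to proving the amalgam-norm bound
$\|V_\varphi f\|_{W(L^{p,q}_{m_\lambda})}\leq C'\|f\|_{\boldsymbol{M}^{p,q}_{m_\lambda}}$. For this I would invoke the pointwise convolution estimate of Proposition~\ref{lemma1133G} taken with $\psi=\gamma=\varphi$,
\beqsn
|V_\varphi f(z)|\leq \frac{1}{(2\pi)^d\|\varphi\|_{L^2}^{2}}\bigl(|V_\varphi f|\ast|V_\varphi\varphi|\bigr)(z),
\eeqsn
and combine it with the Young-type inequality for amalgam spaces (Theorem~11.1.5 of \cite{G}),
\beqsn
\|F\ast G\|_{W(L^{p,q}_{m_\lambda})}\leq C\,\|F\|_{L^{p,q}_{m_\lambda}}\,\|G\|_{W(L^1_{v_\lambda})},
\eeqsn
applied with $F=|V_\varphi f|$ and $G=|V_\varphi\varphi|$. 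The $v_\lambda$-moderateness of $m_\lambda$, already noted after \eqref{21bis}, is what makes this Young inequality available.

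It remains to verify that the factor $\|V_\varphi\varphi\|_{W(L^1_{v_\lambda})}$ is finite, and this is precisely the point that the remark preceding Theorem~\ref{th1223G} addresses: since $\varphi\in\Sch_\omega(\R^d)\subseteq\Sch(\R^d)$, Proposition~\ref{propcontS} (or equivalently Theorem~\ref{th27GZ}) yields $V_\varphi\varphi\in\Sch_\omega(\R^{2d})$, whence $V_\varphi\varphi\in W(L^1_{v_\lambda})$ by Proposition 12.1.11 of \cite{G}. Chaining the three estimates gives
\beqsn
\|C_\varphi f\|_{\ell^{p,q}_{\widetilde m_\lambda}}\leq C(\alpha_0,\beta_0,\lambda)\,C\,\|V_\varphi\varphi\|_{W(L^1_{v_\lambda})}\,\|V_\varphi f\|_{L^{p,q}_{m_\lambda}}=C''\,\|f\|_{\boldsymbol{M}^{p,q}_{m_\lambda}},
\eeqsn
which is the desired boundedness. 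The main technical obstacle is the Young inequality for the amalgam space with the exponential weight $v_\lambda$; once this is available from \cite{G} and one checks that subadditivity of $\omega$ indeed gives $v_\lambda$-moderateness of $m_\lambda$ for every $\lambda\neq 0$, the rest is bookkeeping.
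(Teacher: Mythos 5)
Your sketch is correct and follows essentially the same route as the paper, which deliberately omits the proof because it is that of \cite[Thm.\ 12.2.3]{G}: sample via Proposition~\ref{prop1114G} (the continuity of $V_\varphi f$ needed there comes from Theorem~\ref{th24GZ}), control the amalgam norm through the convolution bound of Proposition~\ref{lemma1133G} together with \cite[Thm.\ 11.1.5]{G}, and close with $V_\varphi\varphi\in W(L^1_{v_\lambda})$ --- exactly the remark the authors make before the statement. Your justification of that last membership via $V_\varphi\varphi\in\Sch_\omega(\R^{2d})$ (Theorem~\ref{th27GZ}) is in fact the cleaner route for the exponential weights $v_\lambda$, since the literal statement of \cite[Prop.\ 12.1.11]{G} concerns polynomially moderate weights.
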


\begin{Th}
\label{Th1224G}
Let $\psi\in\Sch_\omega(\R^d)$. Then we have:
\begin{itemize}
\item[(i)]
The operator
\beqsn
D_\psi:\ell^{p,q}_{\widetilde{m}_\lambda}(\Z^{2d})\longrightarrow \boldsymbol{M}^{p,q}_{m_\lambda}(\R^d)
\eeqsn
is bounded, for every $1\leq p,q\leq\infty$, $\alpha_0,\beta_0>0$, and $\lambda\in\R\setminus\{0\}$.
\item[(ii)]
For every $c\in\ell^{p^\prime,q^\prime}_{\widetilde{m}_{-\lambda}}$ and $f\in\boldsymbol{M}^{p,q}_{m_\lambda}$ we have that
\beqs
\label{52}
\langle D_\psi c,f\rangle = \langle c,C_\psi f\rangle,\qquad \text{for\ }1\leq p,q<\infty
\eeqs
and
\beqs
\label{53}
\langle C_\psi f,c\rangle = \langle f,D_\psi c\rangle,\qquad \text{for\ }1<p,q\leq\infty.
\eeqs
\item[(iii)]
For $p,q<\infty$, we have that $D_\psi c$ converges unconditionally in $\boldsymbol{M}^{p,q}_{m_\lambda}$; if $p=q=\infty$, then $D_\psi c$ converges unconditionally weak${}^*$ in $\boldsymbol{M}^{\infty}_{1/v_\lambda}$.
\end{itemize}
\end{Th}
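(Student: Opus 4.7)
The plan is to base everything on the elementary identity
$V_{\varphi}(\Pi(\sigma)\psi)(z)=e^{-i\langle x-\alpha_0 k,\beta_0 n\rangle}\,V_{\varphi}\psi(z-\sigma)$
(for $\sigma=(\alpha_0 k,\beta_0 n)$, $z=(x,\xi)$), which reduces the analysis of $D_{\psi}$ to convolution-type estimates between sequences on $\Lambda$ and the function $V_{\varphi}\psi$. Since $\varphi,\psi\in\Sch_{\omega}(\R^{d})$, Proposition~\ref{propcontS} gives $V_{\varphi}\psi\in\Sch_{\omega}(\R^{2d})\subset\Sch(\R^{2d})$, hence by \cite[Prop.~12.1.11]{G} the function $V_{\varphi}\psi$ belongs to the amalgam space $W(L^{1}_{v_{\lambda}})$ for every $\lambda\in\R$.

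For \emph{(i)}, I would first prove the bound on finitely supported sequences (so that the sum in \eqref{42} makes unambiguous sense) and then extend by density. Taking the STFT and using the identity above yields the pointwise majorization
$$
|V_{\varphi}D_{\psi}c\,(z)|\le\sum_{\sigma\in\Lambda}|c_{\sigma}|\,|V_{\varphi}\psi(z-\sigma)|.
$$
The right-hand side is the convolution of the measure $\sum c_{\sigma}\delta_{\sigma}$ with $|V_{\varphi}\psi|$. Applying the discrete--continuous Young inequality in the form of Theorem~11.1.5 of \cite{G}, namely $\ell^{p,q}_{\widetilde m_{\lambda}}\ast W(L^{1}_{v_{\lambda}})\hookrightarrow L^{p,q}_{m_{\lambda}}$ (which uses that $m_{\lambda}$ is $v_{\lambda}$-moderate, already observed after \eqref{21bis}), one obtains
$$
\|D_{\psi}c\|_{\boldsymbol{M}^{p,q}_{m_{\lambda}}}\le C\,\|c\|_{\ell^{p,q}_{\widetilde m_{\lambda}}}\,\|V_{\varphi}\psi\|_{W(L^{1}_{v_{\lambda}})},
$$
with $C=C(\alpha_{0},\beta_{0},\lambda)$. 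The finite-support sequences being dense in $\ell^{p,q}_{\widetilde m_{\lambda}}$ for $p,q<\infty$, the operator extends by continuity; for the cases involving $\infty$ the same estimate defines $D_{\psi}c$ directly as an element of $\Sch'_{\omega}$ by duality against Schwartz windows, using Theorem~\ref{th1223G}.

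For \emph{(ii)}, I would verify both identities first on finite sequences $c$ and on Schwartz $f$, where the interchange of the (finite) sum and the integral is trivial: by definition
$$
\langle D_{\psi}c,f\rangle=\sum_{\sigma}c_{\sigma}\langle\Pi(\sigma)\psi,f\rangle=\sum_{\sigma}c_{\sigma}\,\overline{C_{\psi}f(\sigma)}=\langle c,C_{\psi}f\rangle,
$$
and similarly for \eqref{53}. To extend, one uses the continuity of both $D_{\psi}$ and $C_{\psi}$ on the relevant spaces (part (i) together with Theorem~\ref{th1223G}), the duality $(\boldsymbol{M}^{p,q}_{m_{\lambda}})^{*}=\boldsymbol{M}^{p',q'}_{1/m_{\lambda}}$ of Theorem~\ref{Th1136G}, and the density of $\Sch_{\omega}$, resp.\ of finite sequences, in the relevant spaces (Proposition~\ref{Prop1134G}), the two cases $p,q<\infty$ versus $p,q>1$ corresponding to which side of the pairing is approximated.

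For \emph{(iii)}, when $p,q<\infty$ the unconditional convergence follows from the boundedness in (i): for any finite $F\subset\Lambda$ the tail $\sum_{\sigma\notin F}c_{\sigma}\Pi(\sigma)\psi$ has modulation norm at most $C\|V_{\varphi}\psi\|_{W(L^{1}_{v_{\lambda}})}\|c\cdot\mathbf{1}_{\Lambda\setminus F}\|_{\ell^{p,q}_{\widetilde m_{\lambda}}}$, and finiteness of the $\ell^{p,q}_{\widetilde m_{\lambda}}$-norm with $p,q<\infty$ forces this quantity to vanish as $F$ exhausts $\Lambda$, uniformly in the enumeration. For $p=q=\infty$ one cannot hope for norm convergence; instead, for every test element $g\in\boldsymbol{M}^{1}_{v_{\lambda}}$ (the predual of $\boldsymbol{M}^{\infty}_{1/v_{\lambda}}$) the identity (ii) rewrites $\langle D_{\psi}c,g\rangle=\langle c,C_{\psi}g\rangle$, and $C_{\psi}g\in\ell^{1}_{\widetilde v_{\lambda}}$ by Theorem~\ref{th1223G}, so the latter series converges absolutely and hence unconditionally, which is exactly unconditional weak$^{*}$ convergence of $D_{\psi}c$. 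The main technical obstacle I anticipate is checking the weighted Young inequality of amalgam type in our $\omega$-setting (i.e., that the discrete-continuous convolution above lands in $L^{p,q}_{m_{\lambda}}$ with the stated constant): all other steps reduce to bookkeeping, but this estimate is what links the sequence side and the function side and genuinely uses the $v_{\lambda}$-moderateness of $m_{\lambda}$.
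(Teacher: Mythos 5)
Your proposal is correct and follows essentially the same route as the paper, which omits the proof precisely because it is the argument of Theorems 12.2.3 and 12.2.4 of \cite{G} adapted via the single remark you also make: $V_\varphi\psi\in W(L^1_{v_\lambda})$, so that the discrete--continuous Young inequality (Theorem 11.1.5 of \cite{G}) yields the norm bound, with the duality and unconditional-convergence parts handled by the same finite-sequence-plus-density and tail/weak${}^*$ arguments. One small caveat: since $v_\lambda=e^{|\lambda|\omega}$ grows faster than any polynomial, the membership $V_\varphi\psi\in W(L^1_{v_\lambda})$ is properly justified by the $\Sch_\omega$-decay $|V_\varphi\psi(z)|\leq C_\mu e^{-\mu\omega(z)}$ for all $\mu>0$ (Theorem \ref{th27GZ}) rather than by $V_\varphi\psi\in\Sch(\R^{2d})$ together with \cite[Prop.~12.1.11]{G} alone --- though your displayed inclusion $V_\varphi\psi\in\Sch_\omega(\R^{2d})$ already records the stronger fact, and the paper's own remark invokes the same citation.
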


Now,  we study the Gabor frame operator \eqref{40}. We recall (see \cite[Prop. 5.1.1 and 5.2.1]{G}) that
if we take a window $\varphi\in L^2(\R^d)$ and a lattice $\Lambda$ such that $\{\Pi(\sigma)\varphi\}_{\sigma\in\Lambda}$ is a Gabor frame for $L^2(\R^d)$, the operator \eqref{40} is invertible on $L^2(\R^d)$. Moreover, if we define the
{\em dual window} $\psi$ of $\varphi$ by $\psi:=S_{\varphi,\varphi}^{-1}\varphi$, we have that for every $f\in L^2(\R^d)$,
\beqsn
f=\sum_{\sigma\in\Lambda}\langle f,\Pi(\sigma)\varphi\rangle\Pi(\sigma)\psi
\eeqsn
with unconditional convergence in $L^2(\R^d)$. We observe also that
if $\varphi\in\Sch_\omega(\R^d)$ then
the dual window $\psi\in\Sch_\omega(\R^d)$ by \cite[Thm. 4.2]{GZ}.

\begin{Lemma}
\label{cor1226G}
Fix $\varphi\in\Sch_\omega(\R^d)\setminus\{0\}$, and let $\psi\in\Sch_\omega(\R^d)\setminus\{0\}$ be the dual window of $\varphi$. For $f\in \boldsymbol{M}^{p,q}_{m_\lambda}(\R^d)$, $\lambda\in\R\setminus\{0\}$, we have
\beqsn
f=D_\psi C_\varphi f = \sum_{\sigma\in\Lambda} \langle f,\Pi(\sigma)\varphi\rangle \Pi(\sigma)\psi
\eeqsn
and
\beqsn
f=D_\varphi C_\psi f = \sum_{\sigma\in\Lambda} \langle f,\Pi(\sigma)\psi\rangle \Pi(\sigma)\varphi,
\eeqsn
with convergence in $\boldsymbol{M}^{p,q}_{m_\lambda}$ for $p,q<\infty$, and weak${}^*$ convergence in $\boldsymbol{M}^{\infty}_{1/v_\lambda}$ in the case $p=q=\infty$.
\end{Lemma}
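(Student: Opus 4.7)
The plan is to lift the classical $L^2$ Gabor frame identity $f=D_\psi C_\varphi f$ to the modulation spaces $\boldsymbol{M}^{p,q}_{m_\lambda}(\R^d)$ by combining the continuity of the analysis operator $C_\varphi$ (Theorem~\ref{th1223G}) and of the synthesis operator $D_\psi$ (Theorem~\ref{Th1224G}) with the density of $\Sch_\omega(\R^d)$ in $\boldsymbol{M}^{p,q}_{m_\lambda}$ when $p,q<\infty$ (Proposition~\ref{Prop1134G}), supplemented by a duality argument against the predual $\boldsymbol{M}^1_{v_\lambda}$ in the endpoint case $p=q=\infty$. Once the limit of the partial sums is identified with $f$, the unconditional convergence of the series follows directly from Theorem~\ref{Th1224G}(iii).

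For $1\leq p,q<\infty$, the first step is the observation that on $\Sch_\omega(\R^d)\subset L^2(\R^d)\cap\boldsymbol{M}^{p,q}_{m_\lambda}$ the identity $D_\psi C_\varphi f=f$ holds in $L^2$ by standard Gabor frame theory, and therefore also in $\Sch'_\omega(\R^d)$ and in the ambient modulation space, since the relevant inclusions are continuous. The composition $D_\psi C_\varphi:\boldsymbol{M}^{p,q}_{m_\lambda}\to\boldsymbol{M}^{p,q}_{m_\lambda}$ is bounded by Theorems~\ref{th1223G} and \ref{Th1224G}(i), so a standard density-plus-continuity argument extends the identity to all $f\in\boldsymbol{M}^{p,q}_{m_\lambda}$.

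The case $p=q=\infty$ is what I expect to be the main obstacle, since $\Sch_\omega(\R^d)$ is no longer norm-dense in $\boldsymbol{M}^\infty_{m_\lambda}$ and the identification takes place in the larger space $\boldsymbol{M}^\infty_{1/v_\lambda}$. My strategy is to test $D_\psi C_\varphi f$ against an arbitrary $g$ in the predual $\boldsymbol{M}^1_{v_\lambda}$ (whose dual is $\boldsymbol{M}^\infty_{1/v_\lambda}$ by Theorem~\ref{Th1136G}) and to apply formula \eqref{52} of Theorem~\ref{Th1224G}(ii) with $p=q=1$ to get $\langle D_\psi C_\varphi f,g\rangle=\langle C_\varphi f,C_\psi g\rangle=\sum_{\sigma\in\Lambda}C_\varphi f(\sigma)\overline{C_\psi g(\sigma)}$, where absolute convergence of the series is ensured by the $v_\lambda$-moderateness of $m_\lambda$ combined with $C_\varphi f\in\ell^\infty_{\widetilde m_\lambda}$ and $C_\psi g\in\ell^1_{\widetilde v_\lambda}$. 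To match this with $\langle f,g\rangle$, I would first verify the equality for $g\in\Sch_\omega(\R^d)$ by inserting the expansion $g=D_\varphi C_\psi g$, which converges in $\boldsymbol{M}^1_{m_\mu}$ for every $\mu>0$ by Theorem~\ref{Th1224G}(iii) with $p=q=1$; since $\Sch'_\omega(\R^d)=\bigcup_{\mu>0}\boldsymbol{M}^\infty_{1/m_\mu}$ by Remark~\ref{form25RW}, any $f\in\Sch'_\omega(\R^d)$ lies in some $(\boldsymbol{M}^1_{m_\mu})^*$ and therefore commutes with this limit. The density of $\Sch_\omega(\R^d)$ in $\boldsymbol{M}^1_{v_\lambda}$ (Proposition~\ref{Prop1134G}) then extends the equality $\langle D_\psi C_\varphi f,g\rangle=\langle f,g\rangle$ to all $g\in\boldsymbol{M}^1_{v_\lambda}$, yielding the desired weak${}^*$ convergence. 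The second identity $f=D_\varphi C_\psi f$ follows by exchanging the roles of $\varphi$ and $\psi$ throughout, using that $\varphi$ is also a dual window of $\psi$ by symmetry of the Gabor frame operator.
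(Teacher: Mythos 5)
Your proposal is correct and follows essentially the same route as the paper: for $p,q<\infty$ the identical density-plus-continuity argument (the $L^2$ identity on $\Sch_\omega(\R^d)$ combined with Theorems~\ref{th1223G}, \ref{Th1224G} and Proposition~\ref{Prop1134G}), and for $p=q=\infty$ the same duality argument against $\boldsymbol{M}^1_{v_\lambda}$ via \eqref{52}--\eqref{53} together with the already-proved $p=q=1$ case. The only deviation is cosmetic: the paper applies $D_\varphi C_\psi g=g$ directly to every $g\in\boldsymbol{M}^1_{v_\lambda}$ (making your intermediate restriction to $g\in\Sch_\omega(\R^d)$ followed by density superfluous, though harmless).
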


\begin{proof}
We first consider the case $p,q<\infty$. From Proposition \ref{Prop1134G} we have that there exists a sequence $f_n\in\Sch_\omega(\R^d)$ such that $f_n\to f$ in $\boldsymbol{M}^{p,q}_{m_\lambda}$ as $n\to \infty$. Since $\Sch_\omega(\R^d)\subset L^2(\R^d)$, we have that
\beqs
\label{57}
f_n=D_\psi C_\varphi f_n=D_\varphi C_\psi f_n.
\eeqs
From Theorems \ref{th1223G} and \ref{Th1224G} we obtain $D_\psi C_\varphi f_n \to D_\psi C_\varphi f$ and $D_\varphi C_\psi f_n\to D_\varphi C_\psi f$ in $\boldsymbol{M}^{p,q}_{m_\lambda}$, and so from \eqref{57} the result is proved. \\[0.2cm]
We now pass to the case $p=q=\infty$. Let $f\in\boldsymbol{M}^{\infty}_{1/v_\lambda}$ and $g\in \boldsymbol{M}^1_{v_\lambda}$. We have to prove that
\beqs
\label{58}
\langle f,g\rangle = \langle D_\psi C_\varphi f,g\rangle = \langle D_\varphi C_\psi f,g\rangle.
\eeqs
From \eqref{52} and \eqref{53} we have that
\beqsn
\langle D_\psi C_\varphi f,g\rangle = \langle f, D_\varphi C_\psi g\rangle;
\eeqsn
from the previous point we have that $D_\varphi C_\psi g=g$ in $\boldsymbol{M}^1_{v_\lambda}$, so the first equality in \eqref{58} is proved. The other is similar.
\end{proof}

\begin{Rem}
\begin{em}
\label{distributions}
Let $u\in\Sch_\omega^\prime(\R^d)$, and $\varphi,\psi\in\Sch_\omega(\R^d)$ as in Lemma \ref{cor1226G}. Then for every $\theta\in\Sch_\omega(\R^d)$ we have
\beqs
\label{24RW}
\langle u,\theta\rangle=\sum_{\sigma\in\Lambda}
\langle u,\Pi(\sigma)\varphi\rangle\langle\Pi(\sigma)\psi,\theta\rangle.
\eeqs
We have indeed that from Remark \ref{form25RW} there exists $\lambda<0$ such that $u\in \boldsymbol{M}^\infty_{m_\lambda} = \boldsymbol{M}^\infty_{1/v_\lambda}$. Then, from Lemma \ref{cor1226G}, for every $g\in \boldsymbol{M}^1_{v_\lambda}$,
\beqsn
\langle u,g\rangle = \sum_{\sigma\in\Lambda} \langle u,\Pi(\sigma) \varphi\rangle \langle\Pi(\sigma) \psi,g\rangle.
\eeqsn
From Proposition \ref{Prop1134G}, the previous formula then holds for $g=\theta\in\Sch_\omega(\R^d)$, so we have \eqref{24RW}.
\end{em}
\end{Rem}

We can now prove the main result of this section.
\begin{Th}
\label{th35RW}
If $u\in\Sch'_\omega(\R^d)$ then
\beqsn
\WF'_\omega(u)=\WF^G_\omega(u).
\eeqsn
\end{Th}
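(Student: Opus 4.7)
My plan is to prove the two inclusions separately. The inclusion $\WF^G_\omega(u) \subseteq \WF'_\omega(u)$ is essentially immediate: if $z_0 \notin \WF'_\omega(u)$, there is an open cone $\Gamma \ni z_0$ with rapid decay of $V_\varphi u$ on all of $\Gamma$. Since $\WF'_\omega(u)$ does not depend on the window (Proposition \ref{cor33RW}), we may take $\varphi$ to be the window used in Definition \ref{def34RW}, and the desired decay on $\Lambda \cap \Gamma$ is then just the restriction of the decay on $\Gamma$, so $z_0 \notin \WF^G_\omega(u)$.

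For the reverse inclusion $\WF'_\omega(u) \subseteq \WF^G_\omega(u)$, the core idea is a reconstruction argument: use the Gabor expansion to rewrite $V_\varphi u(z)$ in terms of its lattice samples, and then exploit the rapid decay at the samples inside $\Gamma$ together with the geometric separation between a smaller cone $\Gamma'$ and the complement of $\Gamma$. Assume $z_0 \notin \WF^G_\omega(u)$ via the cone $\Gamma$. Choose an open cone $\Gamma' \ni z_0$ with $\overline{\Gamma' \cap S_{2d-1}} \subset \Gamma$, and let $\psi \in \Sch_\omega(\R^d)$ be the dual window of $\varphi$ relative to $\Lambda$, which lies in $\Sch_\omega(\R^d)$ as recalled in the text. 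Applying Remark \ref{distributions} to $\theta = \Pi(z)\varphi \in \Sch_\omega(\R^d)$ gives
\begin{equation*}
V_\varphi u(z) = \langle u, \Pi(z)\varphi\rangle = \sum_{\sigma\in\Lambda} V_\varphi u(\sigma)\,\langle\Pi(\sigma)\psi,\Pi(z)\varphi\rangle,
\end{equation*}
and since $|\langle\Pi(\sigma)\psi,\Pi(z)\varphi\rangle| = |V_\varphi\psi(z-\sigma)|$, splitting at $\Lambda\cap\Gamma$ and $\Lambda\setminus\Gamma$ yields
\begin{equation*}
|V_\varphi u(z)| \leq \sum_{\sigma\in\Lambda\cap\Gamma}|V_\varphi u(\sigma)||V_\varphi\psi(z-\sigma)| + \sum_{\sigma\in\Lambda\setminus\Gamma}|V_\varphi u(\sigma)||V_\varphi\psi(z-\sigma)| =: I_1(z)+I_2(z).
\end{equation*}

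For $I_1$, the hypothesis supplies decay of $|V_\varphi u(\sigma)|$ of any order in $\omega(\sigma)$, and Theorem \ref{th27GZ} gives the analogous decay of $V_\varphi\psi \in \Sch_\omega(\R^{2d})$; subadditivity $\omega(z)\leq\omega(\sigma)+\omega(z-\sigma)$ converts the product into $e^{-\lambda\omega(z)}$ times an absolutely summable factor (convergence follows from condition $(\gamma)$, which makes $\sum_\sigma e^{-M\omega(\sigma)}$ finite for $M$ large and gives a uniform bound on $\sum_\sigma e^{-M\omega(z-\sigma)}$). For $I_2$, I would use Theorem \ref{th24GZ} to bound $|V_\varphi u(\sigma)|\leq c\,e^{\lambda_0\omega(\sigma)}$, together with the separation lemma already implicit in the proof of Proposition \ref{cor33RW}: one may choose $\varepsilon>0$ so that $z\in\Gamma'$, $|z|\geq 1$, $\sigma\notin\Gamma$ force $|z-\sigma|\geq\varepsilon|z|$. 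Iterating subadditivity through $|z|\leq N\varepsilon|z|$ with $N=\lceil 1/\varepsilon\rceil$ gives $\omega(z-\sigma)\geq\omega(\varepsilon|z|)\geq\omega(z)/N$; combining this with $\omega(\sigma)\leq\omega(z)+\omega(z-\sigma)$ and the arbitrarily fast decay of $V_\varphi\psi$ produces $I_2(z)\leq C_\lambda e^{-\lambda\omega(z)}$ for any prescribed $\lambda>0$. Together $I_1+I_2$ yields rapid decay of $V_\varphi u$ on $\Gamma'$, so $z_0\notin\WF'_\omega(u)$.

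The main obstacle is the $I_2$ estimate. The points $\sigma\notin\Gamma$ are entirely unconstrained, so $|V_\varphi u(\sigma)|$ is only controlled by the a priori exponential bound of Theorem \ref{th24GZ}; turning the purely geometric separation $|z-\sigma|\geq\varepsilon|z|$ into a weighted estimate strong enough to dominate the growth in $\omega(\sigma)$ requires carefully iterating subadditivity and balancing three parameters (the a priori exponent $\lambda_0$, the decay exponent we extract from $V_\varphi\psi$, and the target exponent $\lambda$). Once this cone-separation/subadditivity exchange is set up, the rest is bookkeeping, and the $I_1$ estimate together with the first inclusion is essentially routine given the results of Section \ref{sec2}.
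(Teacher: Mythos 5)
Your proof is correct, and its skeleton is the same as the paper's: the trivial inclusion $\WF^G_\omega(u)\subseteq\WF'_\omega(u)$, then the dual-window Gabor expansion of Remark~\ref{distributions} tested against $\theta=\Pi(z)\varphi$ and split over $\Lambda\cap\Gamma$ and $\Lambda\setminus\Gamma$ --- your $I_1$ and $I_2$ are exactly the paper's $V_\varphi u_1$ and $V_\varphi u_2$. The one genuine divergence is in the $\Lambda\cap\Gamma$ half: the paper first proves that $u_1=\sum_{\sigma\in\Lambda\cap\Gamma}V_\varphi u(\sigma)\Pi(\sigma)\tilde\varphi$ belongs to $\Sch_\omega(\R^d)$ by a lengthy estimate of the seminorms \eqref{seminorms}, using \eqref{3}, \eqref{4}, \eqref{6} and \cite[Prop.~2.1(g)]{BJ-Kotake}, and only then invokes Theorem~\ref{th27GZ} to convert membership into the global decay \eqref{32bis} of $V_\varphi u_1$; you instead estimate the series for $V_\varphi u_1$ directly at the STFT level, where hypothesis \eqref{23}, the decay of $V_\varphi\psi$ (Theorem~\ref{th27GZ} applied to the dual window), subadditivity in the form $\omega(z)\leq\omega(\sigma)+\omega(z-\sigma)$, and lattice summability of $e^{-M\omega(\sigma)}$ (condition $(\gamma)$) give $e^{\lambda\omega(z)}I_1(z)\leq C_\lambda$ in a few lines. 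This is shorter and avoids the $\varphi^*_\omega$-bookkeeping entirely; what the paper's detour buys is the stronger intermediate fact that $u_1$ is itself an $\Sch_\omega$ function, which the theorem does not need. Your $I_2$ matches the paper's \eqref{27}--\eqref{31}, except that you use the separation $|z-\sigma|\geq\varepsilon|z|$ (producing decay in $z$ directly) while the paper uses $|z-\sigma|\geq\varepsilon|\sigma|$ (producing decay in $\sigma$, to absorb the a priori growth $e^{\bar\lambda\omega(\sigma)}$ inside the sum); both follow from $\overline{\Gamma'\cap S_{2d-1}}\subseteq\Gamma$ by the same compactness argument, and with your ordering --- first $\omega(\sigma)\leq\omega(z)+\omega(z-\sigma)$, then splitting the exponent of $e^{-\mu\omega(z-\sigma)}$ into a part converted to $e^{-(a/N)\omega(z)}$ via $\omega(z-\sigma)\geq\omega(z)/N$ and a part kept for uniform summability of $\sum_{\sigma\in\Lambda}e^{-b\omega(z-\sigma)}$ --- the three parameters balance exactly as you describe, since $\mu$ is arbitrary. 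Two small points worth recording in a write-up: splitting the series of Remark~\ref{distributions} into $I_1+I_2$ requires absolute convergence for fixed $z$, which your own bounds (Theorem~\ref{th24GZ} together with the rapid decay of $V_\varphi\psi$) already supply; and the lattice point $\sigma=0$, which lies in $\Lambda\setminus\Gamma$, is harmless since then $|z-\sigma|=|z|$.
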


\begin{proof}
The inclusion $\WF^G_\omega(u)\subseteq \WF'_\omega(u)$ is trivial, so that we only have to prove that
\beqsn
\WF'_\omega(u)\subseteq\WF^G_\omega(u).
\eeqsn

Let $0\neq z_0\notin\WF^G_\omega(u)$. So, there exists an open conic set
$\Gamma\subset\R^{2d}\setminus\{0\}$ containing $z_0$ such that
\eqref{23} is satisfied.
By Remark \ref{distributions} we have that,
for $\varphi\in\Sch_\omega(\R^d)\setminus\{0\}$ and $\tilde\varphi=
S_{\varphi\varphi}^{-1}\varphi\in\Sch_\omega(\R^d)$ its dual window,
\beqsn
\langle u,\psi\rangle=\sum_{\sigma\in\Lambda}V_\varphi u(\sigma)
\langle\Pi(\sigma)\tilde\varphi,\psi\rangle\qquad\forall\psi\in\Sch_\omega(\R^d).
\eeqsn

We denote
\beqsn
u_1=&&\sum_{\sigma\in\Lambda\cap\Gamma}V_\varphi u(\sigma)\Pi(\sigma)\tilde\varphi,\\
u_2=&&\sum_{\sigma\in\Lambda\setminus\Gamma}V_\varphi u(\sigma)\Pi(\sigma)\tilde\varphi.
\eeqsn
Clearly $V_\varphi u(z)=V_\varphi u_1(z)+V_\varphi u_2(z)$.
Denoting $\sigma=(\sigma_1,\sigma_2)\in\R^d\times\R^d$, by \eqref{3}, \eqref{4},
the subadditivity of $\omega$ and \eqref{6}, we can estimate, for every $\alpha,\beta
\in\N_0^d$, $\lambda,\mu>0$:
\beqsn
\lefteqn{e^{-\lambda\varphi^*_\omega\left(\frac{|\alpha|}{\lambda}\right)}
e^{-\mu\varphi^*_\omega\left(\frac{|\beta|}{\mu}\right)}|x^\beta\partial^\alpha u_1(x)|}\\
&&\le\sum_{\sigma\in\Lambda\cap\Gamma}|V_\varphi u(\sigma)|\cdot
\big|x^\beta\partial^\alpha\big(e^{i\langle\sigma_2,x\rangle}
\tilde\varphi(x-\sigma_1)\big)\big|
e^{-\lambda\varphi^*_\omega\left(\frac{|\alpha|}{\lambda}\right)}
e^{-\mu\varphi^*_\omega\left(\frac{|\beta|}{\mu}\right)}\\
&&\le\sum_{\sigma\in\Lambda\cap\Gamma}|V_\varphi u(\sigma)|
\sum_{\gamma\leq\alpha}\binom\alpha\gamma2^{-|\alpha|}
|x|^{|\beta|}e^{-\mu\varphi^*_\omega\left(\frac{|\beta|}{\mu}\right)}
\langle\sigma_2\rangle^{|\alpha-\gamma|}|\partial^\gamma
\tilde\varphi(x-\sigma_1)|e^{-\lambda\varphi^*_\omega\left(\frac{|\alpha|}{\lambda}\right)}
2^{|\alpha|}\\
&&\le\sum_{\sigma\in\Lambda\cap\Gamma}|V_\varphi u(\sigma)|
\sum_{\gamma\leq\alpha}\binom\alpha\gamma2^{-|\alpha|}e^{\mu\omega(x)}
|\partial^\gamma\tilde\varphi(x-\sigma_1)|
\langle\sigma_2\rangle^{|\alpha-\gamma|}e^{3\lambda}e^{-3\lambda\varphi^*_\omega\left(\frac{|\alpha|}{3\lambda}\right)}\\
&&\le C_\lambda\sum_{\sigma\in\Lambda\cap\Gamma}|V_\varphi u(\sigma)|
\sum_{\gamma\leq\alpha}\binom\alpha\gamma2^{-|\alpha|}
e^{\mu\omega(\sigma_1)}e^{\mu\omega(x-\sigma_1)}|\partial^\gamma
\tilde\varphi(x-\sigma_1)|\langle\sigma_2\rangle^{|\alpha-\gamma|}
e^{-3\lambda\varphi^*_\omega\left(\frac{|\alpha|}{3\lambda}\right)}\\
&&\le C_{\lambda,\lambda',\mu}
\sum_{\sigma\in\Lambda\cap\Gamma}|V_\varphi u(\sigma)|
\sum_{\gamma\leq\alpha}\binom\alpha\gamma2^{-|\alpha|}e^{\mu\omega(\sigma_1)}
e^{\lambda' \varphi^*_\omega\left(\frac{|\gamma|}{\lambda'}\right)-3\lambda\varphi^*_\omega\left(\frac{|\alpha|}{3\lambda}\right)}\langle\sigma_2\rangle^{|\alpha-\gamma|}
\eeqsn
for some $C_\lambda,C_{\lambda,\lambda',\mu}>0$.

For $\lambda'=6\lambda$ we apply \cite[Prop. 2.1(g)]{BJ-Kotake}, then
\eqref{3} and \eqref{23}, and finally obtain, for some constants depending on $\lambda$ and
$\mu$:
\beqs
\nonumber
\lefteqn{e^{-\lambda\varphi^*_\omega\left(\frac{|\alpha|}{\lambda}\right)}
e^{-\mu\varphi^*_\omega\left(\frac{|\beta|}{\mu}\right)}
|x^\beta\partial^\alpha u_1(x)|}\\
\nonumber
&&\le C_{\lambda,6\lambda,\mu}
\sum_{\sigma\in\Lambda\cap\Gamma}|V_\varphi u(\sigma)|
\sum_{\gamma\leq\alpha}\binom\alpha\gamma2^{-|\alpha|}e^{\mu\omega(\sigma_1)}
e^{-6\lambda\varphi^*_\omega\left(\frac{|\alpha-\gamma|}{6\lambda}\right)}
\langle\sigma_2\rangle^{|\alpha-\gamma|}\\
\nonumber
&& \le C_{\lambda,6\lambda,\mu}
\sum_{\sigma\in\Lambda\cap\Gamma}|V_\varphi u(\sigma)|
\sum_{\gamma\leq\alpha}\binom\alpha\gamma2^{-|\alpha|}e^{\mu\omega(\sigma_1)}
e^{6\lambda\omega(\langle\sigma_2\rangle)}\\
\nonumber
&&\le C_{\lambda,\mu}\sum_{\sigma\in\Lambda\cap\Gamma}|V_\varphi u(\sigma)|
e^{(\mu+6\lambda)\omega(\langle\sigma\rangle)+\omega(\langle\sigma\rangle)}
e^{-\omega(\langle\sigma\rangle)}\\
\label{26}
&&\le C'_{\lambda,\mu}\sum_{\sigma\in\Lambda\cap\Gamma}
e^{-\omega(\langle\sigma\rangle)}
\leq C''_{\lambda,\mu},\qquad x\in\R^d.
\eeqs
This proves that $u_1\in\Sch_\omega(\R^d)$ (here, we consider the seminorms given in \eqref{seminorms}).
Therefore, from Theorem~\ref{th27GZ}, $V_\varphi u_1\in\Sch_\omega(\R^{2d})$ and
 for every $\lambda>0$ there is a constant $C_\lambda>0$
such that
\beqs
\label{32bis}
e^{\lambda\omega(z)}|V_\varphi u_1(z)|\leq C_\lambda\qquad\forall z\in\R^{2d}.
\eeqs

Let us now fix an open conic set $\Gamma'\subset\R^{2d}\setminus\{0\}$ containing
$z_0$ and such that $\overline{\Gamma'\cap S_{2d-1}}\subseteq \Gamma$.

Then
\beqs
\label{30}
\inf_{\afrac{0\neq\sigma\in\Lambda\setminus\Gamma}{z\in\Gamma'}}
\left|\frac{\sigma}{|\sigma|}-z\right|=\varepsilon>0
\eeqs
and $|\sigma-z|\geq\varepsilon|\sigma|$ for $0\neq\sigma\in\Lambda\setminus\Gamma$
and $z\in\Gamma'$.

From the subadditivity of $\omega$ we have
\beqs
\nonumber
e^{\lambda\omega(z)}|V_\varphi u_2(z)|\leq&&\sum_{\sigma\in\Lambda\setminus\Gamma}
e^{\lambda\omega(\sigma)+\lambda\omega(z-\sigma)}
|V_\varphi u_{2}(\sigma)|\cdot
|\langle\Pi(\sigma)\tilde\varphi,\Pi(z)\varphi\rangle|\\
\label{27}
\leq&&C\sum_{\sigma\in\Lambda\setminus\Gamma}
e^{(\lambda+\bar\lambda)\omega(\sigma)}
e^{\lambda\omega(z-\sigma)}|V_\varphi\tilde\varphi(z-\sigma)|,
\eeqs
for some $C,\bar\lambda>0$, because of Theorem~\ref{th24GZ} and since
(\cite[pg 41]{G})
\beqs
\label{28}
|\langle\Pi(\sigma)\tilde\varphi,\Pi(z)\varphi\rangle|
=|e^{-i\langle\sigma_1,z_2-\sigma_2\rangle}V_\varphi\tilde\varphi(z-\sigma)|
=|V_\varphi\tilde\varphi(z-\sigma)|.
\eeqs

Since $\tilde\varphi\in\Sch_\omega(\R^{d})$, from Theorem~\ref{th27GZ}
we have that for every $\mu>0$ there is a constant $C_\mu>0$ such that
\beqsn
|V_\varphi\tilde\varphi(z-\sigma)|\leq C_\mu e^{-\mu\omega(z-\sigma)}
\eeqsn
and hence, substituting in \eqref{27}:
\beqs
\label{29}
e^{\lambda\omega(z)}|V_\varphi u_2(z)|\leq CC_\mu
\sum_{\sigma\in\Lambda\setminus\Gamma}e^{(\lambda+\bar\lambda)\omega(\sigma)}
e^{(\lambda-\mu)\omega(z-\sigma)}.
\eeqs

However, for $z\in\Gamma'$ and $\sigma\in\Lambda\setminus\Gamma$ we have
$|\sigma-z|\geq\varepsilon|\sigma|$ and therefore, by the subadditivity of
$\omega$,
\beqsn
-\omega(z-\sigma)\leq-\omega(\varepsilon\sigma)\leq -M\omega(\sigma)
\eeqsn
for some $M>0$ depending on the constant $\varepsilon$ defined in \eqref{30}.
By formula \eqref{29} we  obtain
\beqs
\label{31}
e^{\lambda\omega(z)}|V_\varphi u_2(z)|\leq CC_\mu
\sum_{\sigma\in\Lambda\setminus\Gamma}e^{(\lambda+\bar\lambda+\lambda M-\mu M)
\omega(\sigma)}
\leq C_\lambda,\qquad z\in\Gamma',
\eeqs
for some $C_\lambda>0$, if $\mu$ is chosen large enough.

From \eqref{32bis} and \eqref{31} we finally deduce
\beqsn
\sup_{z\in\Gamma'}e^{\lambda\omega(z)}|V_\varphi u(z)|<
+\infty,\qquad\lambda>0,
\eeqsn
and hence $z_0\notin\WF'_\omega(u)$.
\end{proof}

From Theorem~\ref{th35RW}, in what follows we use $\WF'_\omega(u)$ for $\WF^G_\omega(u)$ and any $u\in\Sch'_{\omega}(\R^{d})$.
\begin{Prop}
\label{WFSomega}
For every $u\in\Sch^\prime_\omega(\R^d)$ we have $\WF'_\omega(u)=\emptyset$ if and only if $u\in\Sch_\omega(\R^d)$.
\end{Prop}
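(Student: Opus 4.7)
The plan is a standard two-direction argument using the characterization of $\Sch_\omega$ in terms of STFT decay (Theorem~\ref{th27GZ}) and a compactness argument on the unit sphere.

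For the direction $u\in\Sch_\omega(\R^d)\Rightarrow \WF'_\omega(u)=\emptyset$, I would simply invoke Theorem~\ref{th27GZ}: if $u\in\Sch_\omega(\R^d)$, then for every $\lambda>0$ there is $C_\lambda>0$ with $|V_\varphi u(z)|\leq C_\lambda e^{-\lambda\omega(z)}$ for all $z\in\R^{2d}$. In particular \eqref{16} holds on the open conic set $\Gamma=\R^{2d}\setminus\{0\}$ itself, which contains every $z_0\neq 0$, so no point lies in $\WF'_\omega(u)$.

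For the converse, suppose $\WF'_\omega(u)=\emptyset$ and fix a window $\varphi\in\Sch_\omega(\R^d)\setminus\{0\}$. For every $z_0\in S_{2d-1}$ there is an open conic set $\Gamma_{z_0}\subseteq\R^{2d}\setminus\{0\}$ containing $z_0$ on which \eqref{16} holds for every $\lambda>0$. The open sets $\Gamma_{z_0}\cap S_{2d-1}$ cover the compact sphere $S_{2d-1}$, so finitely many of them, say $\Gamma_{z_1},\dots,\Gamma_{z_N}$, satisfy $S_{2d-1}\subseteq\bigcup_{j=1}^N\Gamma_{z_j}$. Since each $\Gamma_{z_j}$ is a cone, the union $\bigcup_{j=1}^N\Gamma_{z_j}$ equals $\R^{2d}\setminus\{0\}$. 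For each $\lambda>0$ and each $j$ there is $C_{\lambda,j}>0$ with $|V_\varphi u(z)|\leq C_{\lambda,j}e^{-\lambda\omega(z)}$ on $\Gamma_{z_j}$, hence $|V_\varphi u(z)|\leq (\max_j C_{\lambda,j})\,e^{-\lambda\omega(z)}$ for all $z\neq 0$.

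It remains to absorb the point $z=0$: by Theorem~\ref{th24GZ}, $V_\varphi u$ is continuous, so it is bounded on any compact neighborhood of the origin, and since $\omega$ is bounded on such a neighborhood, the inequality $|V_\varphi u(z)|\leq C'_\lambda e^{-\lambda\omega(z)}$ extends to all of $\R^{2d}$ at the cost of enlarging the constant. Applying Theorem~\ref{th27GZ} in the reverse direction, this rapid decay of $V_\varphi u$ gives $u\in\Sch_\omega(\R^d)$.

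There is no real obstacle here; the only thing to be careful about is that a finite union of open conic neighborhoods of points of $S_{2d-1}$ exhausts $\R^{2d}\setminus\{0\}$ (which follows from compactness of the sphere together with the conic nature of the sets), so that one single estimate of type \eqref{16} holds globally.
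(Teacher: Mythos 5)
Your proof is correct and follows essentially the same route as the paper: the forward direction is a direct application of Theorem~\ref{th27GZ}, and the converse is the same compactness argument covering $S_{2d-1}$ by the traces of the conic sets and taking the maximum of the finitely many constants $C_{\lambda,z_j}$. The only cosmetic difference is at the origin, where the paper simply adds the single value $\vert V_\varphi u(0)\vert e^{\lambda\omega(0)}$ to the maximum, while you invoke continuity of $V_\varphi u$ on a compact neighborhood --- both are fine.
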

\begin{proof}
Suppose that $u\in\Sch_\omega(\R^d)$, and fix a window function $\varphi\in\Sch_\omega(\R^d)\setminus\{0\}$; then from Theorem \ref{th27GZ} we have that for every $\lambda>0$ there exists $C_\lambda>0$ such that
\beqsn
\vert V_\varphi u(z)\vert\leq C_\lambda e^{-\lambda\omega(z)},\qquad \forall z\in\R^{2d}.
\eeqsn
Then for every open conic set $\Gamma\subseteq\R^{2d}\setminus\{0\}$ condition \eqref{16} holds, so $\WF'_\omega(u)=\emptyset$. \hfill\break
Suppose now that $\WF'_\omega(u)=\emptyset$. From Definition \ref{def31RW} we have that for every $\overline{z}\in\R^{2d}\setminus\{0\}$ there exists an open conic set $\Gamma_{\overline{z}}\subseteq\R^{2d}\setminus\{0\}$ containing $\overline{z}$ such that for every $\lambda>0$ there exists $C_{\lambda,\overline{z}}>0$ satisfying
\beqsn
\vert V_\varphi u(z)\vert\leq C_{\lambda,\overline{z}} e^{-\lambda\omega(z)} \qquad\forall z\in\Gamma_{\overline{z}}.
\eeqsn
Let $\Upsilon_{\overline{z}} = \Gamma_{\overline{z}}\cap S_{2d-1}$. We have that $\{\Upsilon_{\overline{z}},\ \overline{z}\in\R^{2d}\setminus\{0\}\}$ is an open covering of $S_{2d-1}$; since $S_{2d-1}$ is compact and $\Gamma_{\overline{z}}$ is conic, there exist $z_1,\dots,z_k\in\R^{2d}\setminus\{0\}$ such that
\beqsn
\Gamma_{z_1}\cup\dots\cup\Gamma_{z_k}=\R^{2d}\setminus\{0\}.
\eeqsn
We then have that for every $\lambda>0$,
\beqsn
\vert V_\varphi u(z)\vert\leq C_\lambda e^{-\lambda\omega(z)} \qquad \forall z\in\R^{2d},
\eeqsn
where $C_\lambda=\max\{ C_{\lambda,z_1},\dots,C_{\lambda,z_k},\vert V_\varphi u(0)\vert e^{\lambda\omega(0)}\}$. From Theorem \ref{th27GZ} we finally have $u\in\Sch_\omega(\R^d)$.
\end{proof}

We now prove that the wave front set $\WF'_\omega$ is not affected by the phase-space shift operator.
\begin{Prop}
\label{cor52RW}
For every $w=(y,\eta)\in\R^{2d}$ and for every $u\in\Sch^\prime_\omega(\R^d)$ we have
\beqsn
\WF'_\omega(\Pi(w)u)=\WF'_\omega(u).
\eeqsn
\end{Prop}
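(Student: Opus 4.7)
The plan is to reduce the invariance to a pointwise identity relating $V_\varphi(\Pi(w) u)$ and $V_\varphi u$. First, a direct change of variables $s = t - y$ in \eqref{13}, together with the definition of $\Pi(w)$, gives
\[
V_\varphi(\Pi(w) u)(z) = e^{-i\langle y,\,\xi-\eta\rangle}\, V_\varphi u(z - w), \qquad z = (x,\xi),\ w = (y,\eta),
\]
so in particular $|V_\varphi(\Pi(w) u)(z)| = |V_\varphi u(z - w)|$. Thus the problem reduces to showing that the rapid decay of $V_\varphi u$ on an open conic neighborhood of $z_0$ survives a bounded translation of its argument, an issue about the interaction of translation with conical sets and with the weight $\omega$.

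To this end, assume $z_0 \notin \WF'_\omega(u)$ with \eqref{16} holding on an open conic $\Gamma \ni z_0$. I would pick a smaller open conic $\Gamma' \ni z_0$ satisfying $\overline{\Gamma' \cap S_{2d-1}} \subseteq \Gamma$. Compactness of this spherical slice yields $R > 0$ (depending on $|w|$) such that $z - w \in \Gamma$ for every $z \in \Gamma'$ with $|z| \geq R$, simply because $(z-w)/|z-w| \to z/|z|$ uniformly on compact sets of directions. For such $z$, the subadditivity $\omega(z) \leq \omega(z-w) + \omega(w)$ gives
\[
e^{\lambda \omega(z)} |V_\varphi(\Pi(w) u)(z)| \leq e^{\lambda \omega(w)}\, e^{\lambda \omega(z-w)}\, |V_\varphi u(z - w)|,
\]
which is uniformly bounded in $\lambda$ by the hypothesis \eqref{16} applied at $z - w \in \Gamma$. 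On the bounded remainder $\{z \in \Gamma' : |z| < R\}$, the pointwise bound of Theorem \ref{th24GZ} together with continuity of $\omega$ gives boundedness of the same quantity. Hence $z_0 \notin \WF'_\omega(\Pi(w) u)$, proving $\WF'_\omega(\Pi(w) u) \subseteq \WF'_\omega(u)$.

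The reverse inclusion follows by applying the same argument to $\Pi(w) u$ with $-w$ in place of $w$, after observing that $\Pi(-w) \Pi(w) u$ equals $u$ up to a unimodular constant and that a scalar factor of modulus one plays no role in \eqref{16}. The only step requiring a moment of care is the geometric claim that a fixed shift $-w$ is eventually absorbed by any strictly larger open cone, but this is precisely what the compactness argument above delivers; every other step is either the identity displayed at the start or a routine application of subadditivity.
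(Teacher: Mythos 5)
Your proof is correct, but it takes a genuinely different route from the paper's. You shift the argument of the STFT via the covariance identity $V_\varphi(\Pi(w)u)(z)=e^{-i\langle y,\xi-\eta\rangle}V_\varphi u(z-w)$ and then absorb the fixed translation $w$ into a slightly smaller cone by a compactness argument on the spherical slice; the paper instead shifts the burden onto the \emph{window}: it computes $|V_\varphi(T_y u)(z)|=|V_{T_{-y}\varphi}u(z)|$ and $|V_\varphi(M_\eta u)(z)|=|V_{M_{-\eta}\varphi}u(z)|$, with no displacement of $z$ at all, and then invokes the window-independence of the wave front set (Proposition~\ref{cor33RW}), treating translation and modulation separately since $\Pi(w)=M_\eta T_y$. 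The paper's argument is shorter because all of the cone geometry is already packaged inside Proposition~\ref{cor33RW}; yours is more self-contained, needing only the covariance formula, subadditivity of $\omega$, the a priori bound of Theorem~\ref{th24GZ} on the bounded piece $\{z\in\Gamma':|z|<R\}$, and essentially the same cone-shrinking condition $\overline{\Gamma'\cap S_{2d-1}}\subseteq\Gamma$ that the paper's lemma uses internally --- and in both proofs the decay is only obtained on the smaller cone $\Gamma'$, which is all that Definition~\ref{def31RW} requires. Two small points of hygiene: your derivation of the covariance identity by substitution in \eqref{13} is literal only for functions, and for $u\in\Sch'_\omega$ it should be read through the pairing \eqref{10} (exactly parallel to the paper's bracket manipulation $\langle T_yu,\Pi(z)\varphi\rangle=\langle u,T_{-y}\Pi(z)\varphi\rangle$, so this is routine); and ``uniformly bounded in $\lambda$'' should read ``finite for each fixed $\lambda>0$, uniformly in $z$'', which is what \eqref{16} asserts.
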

\begin{proof}
Since $\Pi(w)=M_\eta T_y$, it is enough to prove that translation and modulation do not affect the wave front set. Concerning translation, we have that for $z=(x,\xi)\in\R^{2d}$,
\beqsn
V_\varphi (T_y u)(z)=\langle T_y u,\Pi(z)\varphi\rangle = \langle u,T_{-y}\Pi(z)\varphi\rangle = e^{-i\langle y,\xi\rangle} V_{T_{-y}\varphi}u;
\eeqsn
writing $\psi=T_{-y}\varphi\in\Sch_\omega(\R^d)$ we have that
\beqsn
\vert V_\varphi (T_y u)(z)\vert = \vert V_\psi u(z)\vert,
\eeqsn
and since the wave front set does not depend on the window (Proposition~\ref{cor33RW}) we have $\WF'_\omega(T_y u)=\WF'_\omega(u)$. Concerning modulation, we have
\beqsn
V_\varphi (M_\eta u)(z) = \langle M_\eta u,\Pi(z)\varphi\rangle = \langle u,M_{-\eta}\Pi(z)\varphi\rangle = e^{i\langle\eta,x\rangle} V_{M_{-\eta}\varphi}u(z);
\eeqsn
then, writing $\theta=M_{-\eta}\varphi\in\Sch_\omega(\R^d)$, we get
\beqsn
\vert V_\varphi (M_\eta u)(z)\vert = \vert V_\theta u(z)\vert,
\eeqsn
and as before we conclude that $\WF'_\omega(M_\eta u)=\WF'_\omega(u)$.
\end{proof}

The results obtained in Sections 2 and 3 are true with the weaker assumption (see Bj\"orck \cite{B}): ``there exist $a\in\R,b>0$ such that $\omega(t)\geq a+b\log(1+t)$ for $t\ge 0$'' instead of $(\gamma)$ of Definition~\ref{defweight}.
A detailed and modern treatment of these type of weights can be found \cite{BG}. Moreover, the results above are true in the quasi-analytic case also, i.e. when we consider that
$\omega(t)=o(t)$, as $t\to+\infty$, instead of condition $(\beta)$ of Definition~\ref{defweight}.

\section{Applications to (pseudo-)differential operators}
\label{sec4}

In this section we analyze the action of several  operators of pseudo-differential (or differential) type on the global wave front set $\WF'_\omega(u)$ of $u\in\Sch'_{\omega}(\R^{d})$. We will use the kernel theorem in $\Sch_{\omega}$. It is known that $\Sch_{\omega}$ is nuclear for many weight functions $\omega$. For example, whenever they satisfy the following condition:
\begin{equation}\label{BMM}
\exists\ H\ge 1\ \forall\ t\ge 0,\ 2\omega(t)\le \omega(Ht)+H.
\end{equation}
Morever, Bonet, Meise and Melikhov~\cite{BMM} proved that under such a condition the classes of ultradifferentiable functions defined by sequences in the sense of Komatsu satisfying the standard conditions $(M0)$, $(M1)$, $(M2)$ and $(M3)$, and the classes defined by weight functions in the sense of Braun, Meise and Taylor~\cite{BMT} coincide. Hence, under condition \eqref{BMM} our results are true also for spaces of the type we are treating defined by sequences (see, for instance, Langenbruch~\cite{L} for the definition of the spaces and several properties of them).

We start by defining the following symbol class.
\begin{Def}
\label{def23RW}
For $m\in\R$ we define
\beqsn
S^m_\omega:=\{&&\!\!\!a\in C^\infty(\R^{2d}):
\forall\lambda,\mu>0\ \exists C_{\lambda,\mu}>0
\ \mbox{such that}\\
&&|\partial_x^\alpha\partial_\xi^\beta a(x,\xi)|\leq C_{\lambda,\mu}
e^{\lambda\varphi^*_\omega\left(\frac{|\alpha|}{\lambda}\right)}
e^{\mu\varphi^*_\omega\left(\frac{|\beta|}{\mu}\right)}e^{m\omega(\xi)},\
\forall(x,\xi)\in\R^{2d},\alpha,\beta\in\N_0^d\}.
\eeqsn
\end{Def}

Then we consider the Kohn-Nirenberg quantization defined by
\beqs
\label{32}
a(x,D)f(x):=(2\pi)^{-d}\int_{\R^d}e^{i\langle x,\xi\rangle}a(x,\xi)\widehat{f}(\xi)d\xi,
\qquad a\in S^m_\omega, f\in\Sch_\omega(\R^d).
\eeqs
The above Kohn-Nirenberg quantization is well defined since
$\widehat f\in\Sch_\omega(\R^d)$ and hence for every $\lambda>0$ there exists
$C_\lambda>0$ such that
\beqsn
|a(x,\xi)|\cdot|\widehat f(\xi)|\leq e^{m\omega(\xi)}C_\lambda
e^{-\lambda\omega(\xi)}
\eeqsn
which is integrable in $\R^d$ if we choose $\lambda>0$ sufficiently large.
Moreover,
\beqsn
a(x,D):\ \Sch_\omega\longrightarrow\Sch'\subseteq\Sch'_\omega.
\eeqsn

If $\Sch_{\omega}$ is nuclear, we can use the kernel theorem and find a unique distribution
$K\in\Sch'_\omega(\R^{4d})$ of the linear operator
\beqsn
V_\varphi a(x,D)V_\varphi^*:\ \Sch_\omega(\R^{2d})\longrightarrow
\Sch'_\omega(\R^{2d})
\eeqsn
such that
\beqs
\label{Ad10}
V_\varphi a(x,D)V_\varphi^*F(y',\eta')
=(2\pi)^d\int_{\R^{2d}}K(y',\eta';y,\eta)F(y,\eta)dyd\eta
\qquad\forall F\in\Sch_\omega(\R^{2d}),
\eeqs
in the sense that
\beqs
\label{Ad11}
\langle V_\varphi a(x,D)V_\varphi^*F,G\rangle
=(2\pi)^d\langle K(y',\eta';y,\eta),G(y',\eta')F(y,\eta)\rangle
\qquad\forall G\in\Sch_\omega(\R^{2d}).
\eeqs

If $u\in\Sch_\omega(\R^d)$ and $F=V_\varphi u\in\Sch_\omega(\R^{2d})$ for
$\varphi\in\Sch_\omega(\R^d)$ with $\|\varphi\|_{L^2}=1$, then, from \eqref{Ad9},
\beqsn
V_\varphi a(x,D) u(y',\eta')=&&(2\pi)^{-d}V_\varphi a(x,D)V_\varphi^*
V_\varphi u(y',\eta')\\
=&&\int_{\R^{2d}}K(y',\eta';y,\eta)V_\varphi u(y,\eta)dyd\eta
\eeqsn
and we can compute the kernel directly:

\begin{Lemma}
\label{th1714NR}
For $a\in S^m_\omega$, $\varphi\in\Sch_\omega(\R^d)$ with $\|\varphi\|_{L^2}=1$
and $u\in\Sch_\omega(\R^d)$
we have that
\beqs
\label{311RW}
V_\varphi(a(x,D)u)(z')=\int_{\R^{2d}}K(z',z)V_\varphi u(z)dz,
\eeqs
where, for all $z=(y,\eta),z'=(y',\eta')\in\R^{2d}$,
\beqs
\label{312RW}
K(z',z)=(2\pi)^{-2d}e^{i\langle y,\eta\rangle}\int_{\R^{2d}}
e^{i(\langle x,\xi\rangle-\langle y,\xi\rangle-\langle x,\eta'\rangle)}
a(x,\xi)\widehat\varphi(\xi-\eta)\overline{\varphi(x-y')}dxd\xi.
\eeqs
\end{Lemma}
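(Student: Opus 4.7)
My plan is to compute $V_\varphi(a(x,D)u)(z')$ directly from its definition and reduce it to an integral against $V_\varphi u(z)$ by substituting $\widehat{u}$ with the Fourier transform of the inversion formula \eqref{1}. The main obstacle, as is typical in such manipulations, will not be any clever estimate but rather the careful justification of Fubini over four independent $d$-dimensional variables.

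First I would start from
\beqsn
V_\varphi(a(x,D)u)(y',\eta')=\int_{\R^d}a(x,D)u(x)\,\overline{\varphi(x-y')}\,e^{-i\langle x,\eta'\rangle}dx
\eeqsn
and insert the definition \eqref{32} of the Kohn-Nirenberg quantization, producing a double integral involving $a(x,\xi)\widehat{u}(\xi)$. The key substitution is then to express $\widehat{u}(\xi)$ in terms of $V_\varphi u$: applying the Fourier transform in $y$ to \eqref{1} (with $\|\varphi\|_{L^2}=1$) and using that $\widehat{\Pi(y,\eta)\varphi}(\xi)=e^{i\langle y,\eta-\xi\rangle}\widehat{\varphi}(\xi-\eta)$, one obtains
\beqsn
\widehat{u}(\xi)=(2\pi)^{-d}\int_{\R^{2d}}V_\varphi u(y,\eta)\,e^{i\langle y,\eta-\xi\rangle}\widehat{\varphi}(\xi-\eta)\,dy\,d\eta.
\eeqsn
Plugging this in produces a quadruple integral in $(x,\xi,y,\eta)$. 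Reordering the integrations and isolating the inner integral in $(x,\xi)$ immediately gives \eqref{311RW} with $K(z',z)$ in the form \eqref{312RW}, once one pulls out the factor $e^{i\langle y,\eta\rangle}$ from the phase $e^{i(\langle x,\xi-\eta'\rangle+\langle y,\eta-\xi\rangle)}$.

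The only non-trivial point is Fubini. The phases have modulus one, so it suffices to estimate the modulus of the integrand. Using the symbol estimate $|a(x,\xi)|\leq C\,e^{m\omega(\xi)}$, the rapid decay $|\varphi(x-y')|,|\widehat\varphi(\xi-\eta)|\leq C_\lambda\,e^{-\lambda\omega(x-y')}$ and similarly for $\xi-\eta$ (since $\varphi,\widehat\varphi\in\Sch_\omega$), and the rapid decay $|V_\varphi u(y,\eta)|\leq C_\lambda e^{-\lambda\omega(y,\eta)}$ given by Theorem~\ref{th27GZ} (as $u\in\Sch_\omega$), I would apply subadditivity $\omega(\xi)\leq\omega(\xi-\eta)+\omega(\eta)$ to distribute the growth $e^{m\omega(\xi)}$. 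Choosing $\lambda$ much larger than $|m|$ yields an integrand controlled by a product of rapidly decaying exponentials in each of the four variables, so the full $\R^{4d}$ integral is absolutely convergent and Fubini applies. This same absolute convergence then justifies the interchange that identifies the bracketed $(x,\xi)$-integral as $K(z',z)$, completing the proof.
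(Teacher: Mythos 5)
Your proposal is correct and follows essentially the same route as the paper: the paper proves the kernel formula for $V_\varphi a(x,D)V_\varphi^*F$ with general $F\in\Sch_\omega(\R^{2d})$ and then sets $F=V_\varphi u$ via $V_\varphi^*V_\varphi=(2\pi)^d I$, and its inner $x'$-integration producing $\widehat\varphi(\xi-\eta)$ is exactly your upfront computation of $\widehat{\Pi(y,\eta)\varphi}(\xi)$ applied to the Fourier transform of the inversion formula \eqref{1}. Your Fubini justification (symbol growth $e^{m\omega(\xi)}$ beaten by the rapid decay of $\varphi$, $\widehat\varphi$ and $V_\varphi u$ together with subadditivity of $\omega$) matches the paper's; just note explicitly that passing the Fourier transform under the integral in \eqref{1} is itself a (trivial) Fubini interchange, covered by the same absolute-convergence bound.
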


\begin{proof}
Let $F\in\Sch_\omega(\R^{2d})$ and consider the Kohn-Nirenberg quantization
\eqref{32} of $V_\varphi^*F\in\Sch_\omega(\R^{d})$:
\beqsn
a(x,D)V_\varphi^*F(x)=(2\pi)^{-d}\int_{\R^d}e^{i\langle x,\xi\rangle}a(x,\xi)
\widehat{V_\varphi^*F}(\xi)d\xi.
\eeqsn

Then, by \eqref{Ad1},
\beqsn
\lefteqn{V_\varphi a(x,D)V_\varphi^*F(y',\eta')=\int_{\R^d}(a(x,D)V_\varphi^*F)(x)
\overline{\varphi(x-y')}e^{-i\langle x,\eta'\rangle}dx}\\
&&=(2\pi)^{-d}\int_{\R^d}\int_{\R^d}e^{i\langle x,\xi\rangle}a(x,\xi)
\widehat{V_\varphi^*F}(\xi)\overline{\varphi(x-y')}e^{-i\langle x,\eta'\rangle}d\xi dx\\
&&=(2\pi)^{-d}\int_{\R^d}\int_{\R^d}\int_{\R^d}e^{i\langle x,\xi\rangle}a(x,\xi)
V_\varphi^*F(x')e^{-i\langle x',\xi\rangle}\overline{\varphi(x-y')}e^{-i\langle x,\eta'\rangle}
dx'd\xi dx\\
&&=(2\pi)^{-d}\int_{\R^d}\int_{\R^d}\int_{\R^d}\int_{\R^{2d}}
e^{i\langle x,\xi\rangle}a(x,\xi)F(y,\eta)e^{i\langle x',\eta\rangle}
\varphi(x'-y)\\
&&\hspace*{40mm}\cdot
e^{-i\langle x',\xi\rangle}\overline{\varphi(x-y')}e^{-i\langle x,\eta'\rangle}
dyd\eta dx'd\xi dx.
\eeqsn
Since $a\in S^m_\omega$, $F\in\Sch_\omega(\R^{2d})$ and
$\varphi\in\Sch_\omega(\R^d)$, we have that for every
$\lambda_1,\lambda_2,\lambda_3>0$ there exists a constant $C_{\lambda}>0$
such that
\beqsn
\lefteqn{|a(x,\xi)F(y,\eta)\varphi(x'-y)\overline{\varphi(x-y')}|}\\
&&\leq C_{\lambda}e^{m\omega(\xi)}
e^{-\lambda_1\omega(y,\eta)}e^{-\lambda_2\omega(x'-y)}
e^{-\lambda_3\omega(x-y')}\\
&&\leq C_{\lambda}e^{m\omega(\xi)}
e^{-\frac{\lambda_1}{2}\omega(y)}e^{-\frac{\lambda_1}{2}\omega(\eta)}
e^{-\lambda_2\omega(x')+\lambda_2\omega(y)}
e^{-\lambda_3\omega(x)+\lambda_3\omega(y')}.
\eeqsn

Choosing $\lambda_1,\lambda_2>0$ sufficiently large we can apply Fubini's theorem
with respect to the variables $y,\eta$ and $x'$, obtaining:
\beqsn
V_\varphi a(x,D)V_\varphi^*F(y',\eta')=\\
=(2\pi)^{-d}\int_{\R^d}\int_{\R^d}\int_{\R^{2d}}&&
e^{i\langle x,\xi\rangle}a(x,\xi)F(y,\eta)\\
&&\cdot\left(\int_{\R^d}e^{i\langle x',\eta\rangle}\varphi(x'-y)
e^{-i\langle x',\xi\rangle}dx'\right)\overline{\varphi(x-y')}e^{-i\langle x,\eta'\rangle}
dyd\eta d\xi dx\\
=(2\pi)^{-d}\int_{\R^d}\int_{\R^d}\int_{\R^{2d}}&&
e^{i\langle x,\xi\rangle}a(x,\xi)F(y,\eta)\\
&&\cdot\left(\int_{\R^d}e^{i\langle y+s,\eta\rangle}e^{-i\langle y+s,\xi\rangle}\varphi(s)
ds\right)\overline{\varphi(x-y')}e^{-i\langle x,\eta'\rangle}
dyd\eta d\xi dx\\
=(2\pi)^{-d}\int_{\R^d}\int_{\R^d}\int_{\R^{2d}}&&
e^{i\langle x,\xi\rangle}a(x,\xi)F(y,\eta)e^{i\langle y,\eta\rangle}
e^{-i\langle y,\xi\rangle}\\
&&\cdot\left(\int_{\R^d}e^{-i\langle s,\xi-\eta\rangle}\varphi(s)
ds\right)\overline{\varphi(x-y')}e^{-i\langle x,\eta'\rangle}
dyd\eta d\xi dx\\
=(2\pi)^{-d}\int_{\R^d}\int_{\R^d}\int_{\R^{2d}}&&
e^{i\langle x,\xi\rangle}a(x,\xi)F(y,\eta)e^{i\langle y,\eta\rangle}
e^{-i\langle y,\xi\rangle}\\
&&\cdot\,\widehat{\varphi}(\xi-\eta)\overline{\varphi(x-y')}e^{-i\langle x,\eta'\rangle}
dyd\eta d\xi dx.
\eeqsn

Since $a\in S^m_\omega$, $F\in\Sch_\omega(\R^{2d})$ and
$\varphi\in\Sch_\omega(\R^d)$, for every
$\mu_1,\mu_2,\mu_3>0$ there exists a constant $C_{\mu}>0$
such that
\beqsn
\lefteqn{|a(x,\xi)F(y,\eta)\widehat{\varphi}(\xi-\eta)\overline{\varphi(x-y')}|}\\
&&\leq C_{\mu}e^{m\omega(\xi)}
e^{-\mu_1\omega(y)}e^{-\mu_1\omega(\eta)}e^{-\mu_2\omega(\xi)+\mu_2\omega(\eta)}
e^{-\mu_3\omega(x)+\mu_3\omega(y')},
\eeqsn
so that, for $\mu_3,\mu_1>\mu_2$ sufficiently large, we can apply Fubini's
theorem and obtain
\beqsn
V_\varphi a(x,D)V_\varphi^*F(y',\eta')=\\
=(2\pi)^{-d}\int_{\R^{2d}}&&F(y,\eta)e^{i\langle y,\eta\rangle}\\
&&\cdot\left(\int_{\R^{2d}}
e^{i(\langle x,\xi\rangle-\langle y,\xi\rangle-\langle x,\eta'\rangle)}
a(x,\xi)\widehat{\varphi}(\xi-\eta)\overline{\varphi(x-y')}dxd\xi
\right)dyd\eta.
\eeqsn

Applying the above result to $F=V_\varphi u$ for some $u\in\Sch_\omega(\R^d)$,
since $\|\varphi\|_{L^2}=1$ and hence
$V_\varphi^*F=V_\varphi^*V_\varphi u=(2\pi)^du$ by \eqref{Ad9},
we have
\beqsn
V_\varphi(a(x,D)u)(y',\eta')=\int_{\R^{2d}}K(y',\eta';y,\eta)V_\varphi u(y,\eta)dyd\eta,
\eeqsn
for
\beqsn
K(y',\eta';y,\eta)=(2\pi)^{-2d}e^{i\langle y,\eta\rangle}\int_{\R^{2d}}
e^{i(\langle x,\xi\rangle-\langle y,\xi\rangle-\langle x,\eta'\rangle)}
a(x,\xi)\widehat\varphi(\xi-\eta)\overline{\varphi(x-y')}dxd\xi,
\eeqsn
which concludes the proof of the lemma.
\end{proof}
In the next result the following property on the weight function $\omega$ will be useful: from \cite[Lemma 4.7(ii)]{BJO} (for instance), for every $\mu>0$ and $t\geq1$,
   \beqs
   \label{47ii}
   \inf_{\beta\in\N_o^d}t^{-|\beta|}e^{\mu\varphi^*_\omega\left(\frac{|\beta|}{\mu}\right)}
   \leq e^{-(\mu-\frac1b)\omega(t)-\frac ab},
   \eeqs
   where $a\in\R$ and $b>0$ are constants that depend on $\omega$.
\begin{Prop}
\label{prop37RW}
If $a\in S^m_\omega$, $m\in\R$  and $K\in C^\infty(\R^{4d})$ is defined by \eqref{312RW},
then for every $\lambda>0$ there exists a constant $C_\lambda>0$ such that
\beqs
\label{313RW}
|K(z',z)|\leq C_{\lambda} e^{-\lambda\omega(y-y')}
e^{(m-\lambda)\omega(\eta-\eta')}e^{m\omega(\eta')},
\qquad z=(y,\eta),z'=(y',\eta')\in\R^{2d}.
\eeqs
Moreover, if 
$a(z)=0$ for
$z\in\Gamma\setminus\overline{B(0,R)}$ for an open conic set
$\Gamma\subseteq\R^{2d}\setminus\{0\}$ and
for some $R>0$ (here $B(0,R)$ is the ball
of center $0$ and radius $R$ in $\R^{2d}$), then for every open conic set
$\Gamma'\subseteq\R^{2d}\setminus\{0\}$ such that
$\overline{\Gamma'\cap S_{2d-1}}\subseteq\Gamma$ we have that for every
$\lambda>0$ there exists a constant $C_\lambda>0$ such that
for all $z'=(y',\eta')\in\Gamma'$ and $z=(y,\eta)\in\R^{2d}$,
\beqs
\label{314RW}
|K(z',z)|\leq C_\lambda e^{-\lambda\omega(y-y')}e^{-\lambda\omega(\eta-\eta')}
e^{-2\lambda\omega(y')}e^{-2\lambda\omega(\eta')}.
\eeqs
\end{Prop}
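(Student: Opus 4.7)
For \eqref{313RW} I will apply the standard oscillatory–integral integration-by-parts argument to the kernel formula \eqref{312RW}. The phase $\Phi(x,\xi)=\langle x-y,\xi\rangle-\langle x,\eta'\rangle$ satisfies $\partial_\xi e^{i\Phi}=i(x-y)e^{i\Phi}$ and $\partial_x e^{i\Phi}=i(\xi-\eta')e^{i\Phi}$, so factors of $(x-y)^\gamma$ and $(\xi-\eta')^\delta$ can be traded for $\xi$- and $x$-derivatives of the amplitude. To produce decay in $y-y'$ and $\eta-\eta'$ simultaneously, I will decompose
\[
y-y'=(x-y')-(x-y),\qquad \eta-\eta'=(\eta-\xi)+(\xi-\eta'),
\]
expand $(y-y')^\alpha(\eta-\eta')^\beta$ by the binomial theorem, integrate by parts (in $\xi$ to absorb each $(x-y)^\gamma$, in $x$ to absorb each $(\xi-\eta')^\delta$), and bound the leftover factors $(x-y')^{\alpha-\gamma}$ and $(\eta-\xi)^{\beta-\delta}$ against the decay of $\overline{\varphi(x-y')}$ and $\widehat{\varphi}(\xi-\eta)$ by \eqref{3}. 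The derivatives that Leibniz produces on $a$, $\widehat{\varphi}$ and $\varphi$ are controlled by Definition~\ref{def23RW} and by the seminorm bound \eqref{new-seminorms}. The growth $e^{m\omega(\xi)}$ from the symbol estimate is split by iterated subadditivity
\[
e^{m\omega(\xi)}\leq e^{|m|\omega(\xi-\eta)}\,e^{m\omega(\eta-\eta')}\,e^{m\omega(\eta')},
\]
with the first exponential absorbed against the $\widehat{\varphi}(\xi-\eta)$-decay by choosing the window seminorm order large enough. The result is a bound of the form
\[
|(y-y')^\alpha(\eta-\eta')^\beta K(z',z)|\leq C_\lambda\,e^{m\omega(\eta-\eta')}e^{m\omega(\eta')}\,e^{\lambda\varphi^*_\omega(|\alpha|/\lambda)+\lambda\varphi^*_\omega(|\beta|/\lambda)},
\]
and two applications of \eqref{47ii} (taking the infimum over $\alpha$ and the infimum over $\beta$) convert $|y-y'|^{|\alpha|}$ and $|\eta-\eta'|^{|\beta|}$ into the exponential factors $e^{-\lambda\omega(y-y')}$ and $e^{-\lambda\omega(\eta-\eta')}$, turning the coefficient of $\omega(\eta-\eta')$ from $m$ into $m-\lambda$ and giving \eqref{313RW}.

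\textbf{For \eqref{314RW}} I will exploit the support hypothesis on $a$ to obtain the extra decay $e^{-2\lambda\omega(y')}e^{-2\lambda\omega(\eta')}$ and the improvement of the $\omega(\eta-\eta')$ coefficient. Since $a$ vanishes on $\Gamma\setminus\overline{B(0,R)}$, the $(x,\xi)$-integration in \eqref{312RW} may be restricted to $(\R^{2d}\setminus\Gamma)\cup\overline{B(0,R)}$. For $(y',\eta')\in\Gamma'$ the inclusion $\overline{\Gamma'\cap S_{2d-1}}\subseteq\Gamma$ yields, exactly as in \eqref{30}, a constant $\varepsilon>0$ such that
\[
|(x,\xi)-(y',\eta')|\geq\varepsilon\,|(y',\eta')|\qquad\text{for }(x,\xi)\notin\Gamma,\ |(y',\eta')|\geq 1,
\]
so at least one of $|x-y'|$ and $|\xi-\eta'|$ is bounded below by a fixed multiple of $|(y',\eta')|$. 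Combined with the exponential decay of $\varphi$ and $\widehat{\varphi}$ and with $\omega(\xi-\eta')\leq\omega(\xi-\eta)+\omega(\eta-\eta')$, this reservoir of largeness is used to peel off any number of factors $e^{-N\omega(y')}e^{-N\omega(\eta')}$ while still retaining enough window decay to execute the same integration by parts as in the first step and reproduce the required $y-y'$ and $\eta-\eta'$ decays (now with coefficient $-\lambda$ on $\omega(\eta-\eta')$ because the $e^{m\omega(\xi)}$ factor is absorbed against the large window decay rather than being exchanged for $e^{m\omega(\eta-\eta')}$). The contribution of the ball $\overline{B(0,R)}$, on which $a$ is bounded, is treated separately: there $\overline{\varphi(x-y')}$ and $\widehat{\varphi}(\xi-\eta)$ alone provide arbitrarily strong decay in $y'$ and in $\eta$, and the $\eta'$-decay is recovered from $\omega(\eta')\leq\omega(\eta-\eta')+\omega(\eta)$.

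\textbf{Main obstacle.} The principal difficulty is the bookkeeping. The growth $e^{m\omega(\xi)}$ must be distributed among $\omega(\xi-\eta)$, $\omega(\eta-\eta')$ and $\omega(\eta')$ through repeated subadditivity so that the window decay in $\xi-\eta$ is still large enough to absorb all derivatives produced by Leibniz, while the polynomial factors $|y-y'|^{|\alpha|}$ and $|\eta-\eta'|^{|\beta|}$ remain controllable when inverted through \eqref{47ii}; the sign of $m$ requires mildly different choices in the two cases $m\geq 0$ and $m<0$. For \eqref{314RW} the additional subtlety is splitting the integration region geometrically uniformly in $(y',\eta')\in\Gamma'$, handling the transitional regime of small $|(y',\eta')|$ (where the separation inequality is vacuous and the estimate follows from compactness together with \eqref{313RW}), and merging the two subregions into a single clean bound of the form \eqref{314RW}.
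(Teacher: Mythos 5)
Your proposal is correct and takes essentially the same route as the paper's proof: a nonstationary-phase integration by parts on \eqref{312RW} with the symbol and window derivatives controlled by Definition~\ref{def23RW} and the $\Sch_\omega$ seminorms, conversion of the resulting polynomial decay in $y-y'$ and $\eta-\eta'$ into exponential decay via \eqref{47ii}, and subadditive splitting of $e^{m\omega(\xi)}$ — the paper merely implements your binomial bookkeeping through the change of variables $\xi'=\xi-\eta$, $x'=x-y'$ and the Peetre-type inequality \eqref{35}. For \eqref{314RW} you likewise match the paper: restriction of the integration to the support of $a$, the conic separation between $\Gamma'$ and $\R^{2d}\setminus\Gamma$ (which the paper packages as the product inequality \eqref{38} quoted from \cite{RW}, while you derive the equivalent max-form separation directly), and the bounded regime $|z'|\leq 2R$ settled by the already-proved estimate \eqref{313RW}, exactly as in the paper's final step via \eqref{317RW}.
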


\begin{proof}
By the linear change of variables $\xi'=\xi-\eta$ and $x'=x-y'$ in
\eqref{312RW} we have
\beqsn
K(z',z)=&&(2\pi)^{-2d}e^{i\langle y,\eta\rangle}\int_{\R^{2d}}
e^{i(\langle x'+y',\xi'+\eta\rangle-\langle y,\xi'+\eta\rangle
-\langle x'+y',\eta'\rangle)}a(x'+y',\xi'+\eta)
\widehat\varphi(\xi')\overline{\varphi(x')}dx'd\xi'\\
=&&(2\pi)^{-2d}e^{i(\langle y',\eta\rangle-\langle y',\eta'\rangle)}\\
&&\cdot
\int_{\R^{2d}}e^{i(\langle x',\xi'\rangle+\langle x',\eta\rangle+
\langle y',\xi'\rangle
-\langle y,\xi'\rangle-\langle x',\eta'\rangle)}
a(x'+y',\xi'+\eta)\widehat{\varphi}(\xi')
\overline{\varphi(x')}dx'd\xi',
\eeqsn
and hence, setting $x=x'$ and $\xi=\xi'$:
\beqs
\label{33}
|K(z',z)|=(2\pi)^{-2d}\left|\int_{\R^{2d}}
e^{i(\langle x,\eta-\eta'+\xi\rangle+\langle\xi,y'-y\rangle)}
a(x+y',\xi+\eta)\widehat\varphi(\xi)\overline{\varphi(x)}dxd\xi\right|.
\eeqs

Writing, for $M,N\in\N_0$,
\beqsn
e^{i(\langle x,\eta-\eta'+\xi\rangle+\langle\xi,y'-y\rangle)}
=&&\langle\eta-\eta'+\xi\rangle^{-2M}(1-\Delta_x)^M
e^{i(\langle x,\eta-\eta'+\xi\rangle+\langle\xi,y'-y\rangle)}\\
=&&\langle y-y'\rangle^{-2N}\langle\eta-\eta'+\xi\rangle^{-2M}(1-\Delta_x)^M
e^{i\langle x,\eta-\eta'+\xi\rangle}(1-\Delta_\xi)^Ne^{i\langle\xi,y'-y\rangle}
\eeqsn
 and integrating  by parts in \eqref{33}, we have
\beqs
\label{315RW}
|K(z',z)|=(2\pi)^{-2d}\langle y-y'\rangle^{-2N}
\left|\int_{\R^{2d}}e^{i(\langle x,\eta-\eta'\rangle+\langle\xi,y'-y\rangle)}
\lambda_{N,M}(y',\eta',\eta,x,\xi)dxd\xi\right|,
\eeqs
where
\beqsn
\lefteqn{\lambda_{N,M}(y',\eta',\eta,x,\xi)}\\
&&=(1-\Delta_\xi)^N\left[e^{i\langle x,\xi\rangle}
\langle\eta-\eta'+\xi\rangle^{-2M}
(1-\Delta_x)^M\left(a(x+y',\xi+\eta)\widehat\varphi(\xi)
\overline{\varphi(x)}\right)\right].
\eeqsn

For $a\in S^m_\omega$, since $\varphi,\widehat\varphi\in\Sch_\omega(\R^d)$,
by \cite[Thm. 4.8(5)]{BJO} we have that for every
$\lambda$, $\mu$, $\lambda'$, $\mu'$, $\lambda''$, $\mu''>0$
there are positive constants
$C_{\lambda,\mu},C_{\lambda',\lambda''},C_{\mu',\mu''}$, depending only on the indexed
constants, such that for every $M,N,k,\ell\in\N_0$:
\beqs
\nonumber
|\lambda_{N,M}(y',\eta',\eta,x,\xi)|\leq&&
\sum_{\gamma_1+\gamma_2+\gamma_3+\gamma_4=2N}
\frac{(2N)!}{\gamma_1!\gamma_2!\gamma_3!\gamma_4!}
\sum_{\sigma_1+\sigma_2=2M}\frac{(2M)!}{\sigma_1!\sigma_2!}
\langle x\rangle^{|\gamma_1|}\langle \eta-\eta'+\xi\rangle^{-2M-|\gamma_2|}\\
\nonumber
&&\cdot C_{\lambda,\mu}e^{\lambda\varphi^*_\omega\left(\frac{|\gamma_3|}{\lambda}\right)}
e^{\mu\varphi^*_\omega\left(\frac{|\sigma_1|}{\mu}\right)}
e^{m\omega(\xi+\eta)}
C_{\lambda',\lambda''}\langle\xi\rangle^{-k}
e^{\lambda'\varphi^*_\omega\left(\frac{k}{\lambda'}\right)}
e^{\lambda''\varphi^*_\omega\left(\frac{|\gamma_4|}{\lambda''}\right)}\\
\label{36}
&&\cdot
C_{\mu,\mu'}\langle x\rangle^{-\ell}
e^{\mu'\varphi^*_\omega\left(\frac{\ell}{\mu'}\right)}
e^{\mu''\varphi^*_\omega\left(\frac{|\sigma_2|}{\mu''}\right)}.
\eeqs

Note that
\beqs
\label{35}
\langle\eta-\eta'+\xi\rangle^{-1}\leq\sqrt{2}\langle\eta-\eta'\rangle^{-1}
\langle\xi\rangle.
\eeqs
Moreover, we can choose $\lambda''=\lambda$, $\mu''=\mu$ and apply
Proposition~2.1(g) of \cite{BJ-Kotake}. Taking also into account the
subadditivity of $\omega$, we have that for every $\lambda$, $\mu$, $\lambda'$,
$\mu'>0$ there is a constant $C_{\lambda,\mu,\lambda',\mu'}>0$
such that for all $M,N,k,\ell\in\N_0$:
\beqsn
|\lambda_{N,M}(y',\eta',\eta,x,\xi)|\leq&&C_{\lambda,\mu,\lambda',\mu'}
\sum_{\gamma_1+\gamma_2+\gamma_3+\gamma_4=2N}
\frac{(2N)!}{\gamma_1!\gamma_2!\gamma_3!\gamma_4!}2^{-2N}
\sum_{\sigma_1+\sigma_2=2M}\frac{(2M)!}{\sigma_1!\sigma_2!}2^{-2M}\\
&&\cdot
\langle x\rangle^{|\gamma_1|-\ell}\langle \eta-\eta'\rangle^{-2M-|\gamma_2|}
\langle\xi\rangle^{2M+|\gamma_2|}e^{m\omega(\xi)}
e^{m\omega(\eta-\eta')}e^{m\omega(\eta')}\\
&&\cdot
e^{\frac\lambda2\varphi^*_\omega\left(\frac{|\gamma_3+\gamma_4|}{\lambda/2}\right)}
2^{2N}
e^{\frac\mu2\varphi^*_\omega\left(\frac{|\sigma_1+\sigma_2|}{\mu/2}\right)}
2^{2M}
\langle\xi\rangle^{-k}
e^{\lambda'\varphi^*_\omega\left(\frac{k}{\lambda'}\right)}
e^{\mu'\varphi^*_\omega\left(\frac{\ell}{\mu'}\right)}.
\eeqsn

Taking the infimum on $k\in\N_0$ and applying \eqref{47ii} and
\eqref{4}, we get:
\beqsn
|\lambda_{N,M}(y',\eta',\eta,x,\xi)|\leq&&C_{\lambda,\mu,\lambda',\mu'}
\sum_{\gamma_1+\gamma_2+\gamma_3+\gamma_4=2N}
\frac{(2N)!}{\gamma_1!\gamma_2!\gamma_3!\gamma_4!}2^{-2N}
\sum_{\sigma_1+\sigma_2=2M}\frac{(2M)!}{\sigma_1!\sigma_2!}2^{-2M}\\
&&\cdot
e^{\mu'\varphi^*_\omega\left(\frac{\ell}{\mu'}\right)}\langle x\rangle^{2N-\ell}
\langle\eta-\eta'\rangle^{-2M}
e^{\frac\mu6\varphi^*_\omega\left(\frac{2M}{\mu/6}\right)}e^{\mu/2}\\
&&\cdot
\langle\xi\rangle^{2M+2N}e^{m\omega(\xi)}e^{-\left(\lambda'-\frac1b\right)
\omega(\xi)-\frac ab}
e^{m\omega(\eta-\eta')}e^{m\omega(\eta')}
e^{\frac\lambda6\varphi^*_\omega\left(\frac{2N}{\lambda/6}\right)}e^{\lambda/2}.
\eeqsn

Substituting in \eqref{315RW} we have that for all $\lambda,\mu,\lambda',\mu'>0$ there
is a constant $C'_{\lambda,\mu,\lambda',\mu'}>0$ such that for every $M,
N,\ell\in\N_0$:
\beqs
\nonumber
|K(z',z)|\leq&& C'_{\lambda,\mu,\lambda',\mu'}
\langle y-y'\rangle^{-2N}e^{\frac\lambda6\varphi^*_\omega\left(\frac{2N}{\lambda/6}\right)}
\langle\eta-\eta'\rangle^{-2M}e^{\frac\mu6\varphi^*_\omega\left(\frac{2M}{\mu/6}\right)}
e^{m\omega(\eta-\eta')}e^{m\omega(\eta')}\\
\label{34}
&&\cdot e^{\mu'\varphi^*_\omega\left(\frac{\ell}{\mu'}\right)}
\int_{\R^d}\langle x\rangle^{2N-\ell}dx
\int_{\R^d}\langle\xi\rangle^{2M+2N}e^{\left(m-\lambda'+\frac1b\right)\omega(\xi)}d\xi.
\eeqs

Let us now fix $\mu'>0$, choose $\ell\in\N$ and $\lambda'>0$ sufficiently large
so that the above integrals converge, take the infimum on $M$ and $N$, apply
\eqref{47ii} and obtain:
\beqs
\label{34bis}
|K(z',z)|\leq C_{\lambda,\mu}e^{-\left(\frac\lambda6-\frac1b\right)\omega(y-y')}
e^{-\left(\frac\mu6-\frac1b\right)\omega(\eta-\eta')}
e^{m\omega(\eta-\eta')}e^{m\omega(\eta')}.
\eeqs

In particular, by the arbitrariness of $\lambda$ and $\mu$ in \eqref{34bis}, we
have that
for every $\lambda,\mu>0$ there is a constant $C_{\lambda,\mu}>0$ such that
\beqs
\label{317RW}
|K(z',z)|\leq C_{\lambda,\mu} e^{-\lambda\omega(y-y')}
e^{(m-\mu)\omega(\eta-\eta')}e^{m\omega(\eta')}
\qquad\forall z=(y,\eta),z'=(y',\eta')\in\R^{2d},
\eeqs
which proves \eqref{313RW} for $\mu=\lambda$.

Applying \eqref{35} only to $\langle\eta-\eta'+\xi\rangle^{-M}$ in \eqref{36},
by the same computations as to get \eqref{34} we obtain that if $a(z)=0$ for
$z\in\Gamma\setminus\overline{B(0,R)}$, then
\beqs
\nonumber
|K(z',z)|\leq&&C'_{\lambda,\mu,\lambda',\mu'}\langle y-y'\rangle^{-2N}
e^{\frac\lambda6\varphi^*_\omega\left(\frac{2N}{\lambda/6}\right)}
\langle\eta-\eta'\rangle^{-M}e^{\frac\mu6\varphi^*_\omega\left(\frac{2M}{\mu/6}\right)}
e^{m\omega(\eta-\eta')}e^{m\omega(\eta')}\\
\label{37}
&&\cdot\int_{D_{y',\eta}}\langle\eta'-(\xi+\eta)\rangle^{-M}
e^{\mu'\varphi^*_\omega\left(\frac{\ell}{\mu'}\right)}
\langle x\rangle^{2N-\ell}\langle\xi\rangle^{M+2N}
e^{\left(m-\lambda'+\frac1b\right)\omega(\xi)}dxd\xi,
\eeqs
where
\beqsn
D_{y',\eta}:=\{(x,\xi)\in\R^{2d}:\ (x+y',\xi+\eta)\in
(\R^{2d}\setminus\Gamma)\cup\overline{B(0,R)}\}.
\eeqsn

We now want to estimate \eqref{37} for $z'=(y',\eta')\in\Gamma'$
and $z=(y,\eta)\in\R^{2d}$.
It has been proved in \cite[pg 643]{RW} that
\beqs
\label{38}
\langle y'\rangle\langle\eta'\rangle
\leq C\langle x\rangle^2\langle\eta'-(\xi+\eta)\rangle^2
\qquad\forall z'\in\Gamma'\setminus B(0,2R), z\in\R^{2d}, (x,\xi)\in D_{y',\eta}
\eeqs
for some constant $C>0$.

Substituting \eqref{38} into \eqref{37} and applying \cite[Prop. 2.1(g)]{BJ-Kotake} we have,
for $z'\in\Gamma'\setminus B(0,2R)$ and  $z\in\R^{2d}$:
\beqsn
|K(z',z)|\leq&&C^{M/2}C'_{\lambda,\mu,\lambda',\mu'}
\langle y-y'\rangle^{-2N}e^{\frac\lambda6\varphi^*_\omega\left(\frac{2N}{\lambda/6}\right)}\\
&&\cdot \langle\eta-\eta'\rangle^{-M}
e^{\frac{\mu}{12}\varphi^*_\omega\left(\frac{M}{\mu/12}\right)}
e^{\frac{\mu}{24}\varphi^*_\omega\left(\frac{M/2}{\mu/24}\right)}
\langle y'\rangle^{-M/2}
e^{\frac{\mu}{24}\varphi^*_\omega\left(\frac{M/2}{\mu/24}\right)}
\langle\eta'\rangle^{-M/2}\\
&&\cdot e^{m\omega(\eta-\eta')}e^{m\omega(\eta')}
\int_{D_{y',\eta}}\langle x\rangle^{M+2N-\ell}e^{\mu'\varphi^*_\omega\left(\frac{\ell}{\mu'}\right)}\langle\xi\rangle^{M+2N}
e^{\left(m-\lambda'+\frac1b\right)\omega(\xi)}dxd\xi.
\eeqsn

We now fix $\mu'>0$ and choose $\ell\in\N$ and $\lambda'>0$ sufficiently large
so that the above integral is convergent; then take the infimum on $M$ and $N$
and apply \eqref{47ii}. We obtain that for every $\lambda,\mu>0$ there is a constant
$C_{\lambda,\mu}>0$ such that
\beqsn
|K(z',z)|\leq&&C_{\lambda,\mu}e^{-\left(\frac\lambda6-\frac1b\right)\omega(y-y')}
e^{-\left(\frac{\mu}{12}-\frac1b\right)\omega(\eta-\eta')}\\
&&\cdot e^{-\left(\frac{\mu}{24}-\frac1b\right)\omega(y')}
e^{-\left(\frac{\mu}{24}-\frac1b\right)\omega(\eta')} e^{m\omega(\eta-\eta')}
e^{m\omega(\eta')}
\qquad\forall z'\in\Gamma'\setminus B(0,2R), z\in\R^{2d}.
\eeqsn

In particular, for $\bar\lambda=\frac\lambda6-\frac1b$ and $\bar\mu=\frac{\mu}{12}-\frac2b$
we have that there is $C_{\bar\lambda,\bar\mu}>0$ such that, for
$z'\in\Gamma'\setminus B(0,2R)$ and $z\in\R^{2d}$:
\beqsn
|K(z',z)|\leq C_{\bar\lambda,\bar\mu}e^{-\bar\lambda\omega(y-y')}
e^{(m-\bar\mu)\omega(\eta-\eta')}e^{\left(m-\frac{\bar\mu}{2}\right)\omega(\eta')}
e^{-\frac{\bar\mu}{2}\omega(y')}.
\eeqsn

For $\bar\mu\geq4\bar\lambda+2m$ we have that $m-\bar\mu\leq -\bar\lambda$ and
$-\frac{\bar\mu}{2}\leq m-\frac{\bar\mu}{2}\leq-2\bar\lambda$ which proves \eqref{314RW}
for $z'\in\Gamma'\setminus B(0,2R), z\in\R^{2d}$.

Since the estimate \eqref{314RW} for $|z'|\leq 2R$ follows from \eqref{317RW},
the proof is complete.
\end{proof}

\begin{Rem}
\begin{em}
For $a\in S^m_\omega$, $m\in\R,$ and $K\in C^\infty(\R^{4d})$ defined by \eqref{312RW}
the integral in \eqref{311RW} is well defined also for $u\in\Sch'_\omega(\R^d)$. In fact,  \eqref{Ad7} and \eqref{313RW} imply that there exist
$\tilde{C},\tilde{\lambda}>0$ and that for every $\lambda>0$ there exists
$C_\lambda>0$ such that
\beqs
\nonumber
|K(z',z)V_\varphi u(z)|\leq&&\tilde{C}C_\lambda
e^{-\lambda\omega(y-y')+(m-\lambda)\omega(\eta-\eta')}
e^{m\omega(\eta')}
e^{\tilde{\lambda}\omega(y)+\tilde{\lambda}\omega(\eta)}\\
\label{Ad17}
\leq&&\tilde{C}C_\lambda
e^{\lambda\omega(y')+(m+\lambda)\omega(\eta')}
e^{(\tilde{\lambda}-\lambda)\omega(y)+(m+\tilde{\lambda}-\lambda)\omega(\eta)}
\in L^1(\R^{2d}_{z=(y,\eta)})
\eeqs
if $\lambda>\max\{\tilde{\lambda},m+\tilde{\lambda}\}$.
\end{em}
\end{Rem}

We now want to extend Lemma~\ref{th1714NR} for $u\in\Sch'_\omega(\R^d)$. To this aim we first need
the next two results.

\begin{Prop}
\label{propdensity}
The space $\Sch_\omega(\R^d)$ is dense in $\Sch'_\omega(\R^d)$.
\end{Prop}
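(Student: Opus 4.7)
The plan is to approximate an arbitrary $u\in\Sch'_\omega(\R^d)$ by reconstructing it from truncations of its STFT, exactly mimicking the density argument for modulation spaces given in Proposition~\ref{Prop1134G}, but now working in the weak${}^*$ topology of $\Sch'_\omega(\R^d)$ since general elements of $\Sch'_\omega$ need not lie in any $\boldsymbol{M}^{p,q}_{m_\lambda}$ with $p,q<\infty$.

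Concretely, fix a window $\varphi\in\Sch_\omega(\R^d)$ with $(2\pi)^d\|\varphi\|_{L^2}^2=1$, let $K_n:=\{z\in\R^{2d}:|z|\leq n\}$, and set
\beqsn
F_n:=V_\varphi u\cdot\chi_{K_n},\qquad u_n:=V_\varphi^* F_n.
\eeqsn
By Theorem~\ref{th24GZ}, $V_\varphi u$ is continuous and polynomially bounded in $e^{\lambda\omega(z)}$, so each $F_n$ is a bounded measurable function with compact support in $K_n$. In particular, for every $\lambda>0$ there is a constant $C_{n,\lambda}>0$ with $|F_n(z)|\leq C_{n,\lambda}e^{-\lambda\omega(z)}$ for all $z\in\R^{2d}$ (trivially, since $F_n$ vanishes outside the compact set $K_n$ on which $V_\varphi u$ is bounded). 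Proposition~\ref{prop26GZ} then ensures $u_n\in\Sch_\omega(\R^d)$.

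It remains to show $u_n\to u$ in the weak${}^*$ sense. For any $\theta\in\Sch_\omega(\R^d)$, the definition of $V_\varphi^*$ in the weak sense together with the chosen normalization of $\varphi$ and formula \eqref{Ad21} give
\beqsn
\langle u,\theta\rangle=\int_{\R^{2d}}V_\varphi u(z)\overline{V_\varphi\theta(z)}\,dz,\qquad
\langle u_n,\theta\rangle=\int_{K_n}V_\varphi u(z)\overline{V_\varphi\theta(z)}\,dz.
\eeqsn
Theorem~\ref{th24GZ} yields $|V_\varphi u(z)|\leq c\,e^{\lambda\omega(z)}$ for some $c,\lambda>0$, while Theorem~\ref{th27GZ} yields, for every $\mu>0$, a constant $C_\mu>0$ with $|V_\varphi\theta(z)|\leq C_\mu e^{-\mu\omega(z)}$. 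Choosing $\mu>\lambda+1$, the integrand is dominated by $c\,C_\mu e^{-\omega(z)}$, which is integrable on $\R^{2d}$ by condition $(\gamma)$ of Definition~\ref{defweight}. Dominated convergence then forces the tail integral $\langle u-u_n,\theta\rangle=\int_{\R^{2d}\setminus K_n}V_\varphi u\cdot\overline{V_\varphi\theta}\,dz\to 0$ as $n\to\infty$.

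I do not expect any real obstacle: the argument is essentially a transcription of the classical Schwartz-space density proof to the $\omega$-setting, and all the needed ingredients (the inversion formula \eqref{Ad2}, the growth/decay estimates for the STFT of (ultra)distributions and of Schwartz functions, the integrability of $e^{-\omega(z)}$) have already been established above. The only point requiring mild care is bookkeeping the prefactor $(2\pi)^d\|\varphi\|_{L^2}^2$ coming from \eqref{Ad21} so that $V_\varphi^*(V_\varphi u)$ really reproduces $u$ and not a rescaling of it.
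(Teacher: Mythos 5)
Your proof is correct, and it takes a genuinely different route from the paper's. The paper's own proof is a two-line soft duality argument: to show the image of $\Sch_\omega(\R^d)\hookrightarrow\Sch'_\omega(\R^d)$ is dense, it takes $T\in(\Sch'_\omega(\R^d))'$ vanishing on $\Sch_\omega(\R^d)$, uses reflexivity of the (nuclear Fr\'echet, hence Montel) space $\Sch_\omega$ to identify $T$ with some $f\in\Sch_\omega(\R^d)$ acting by the conjugate-linear pairing, and concludes $f=0$ from $\int f\overline{\varphi}=0$ for all $\varphi$; Hahn--Banach then gives density. You instead build an explicit approximating sequence by truncating the STFT, $u_n=V_\varphi^*\bigl(V_\varphi u\cdot\chi_{K_n}\bigr)$, exactly transplanting the scheme of Proposition~\ref{Prop1134G} to the distributional setting: all your steps check out --- the compactly supported $F_n$ trivially satisfy the hypotheses of Proposition~\ref{prop26GZ} (so $u_n\in\Sch_\omega$), the normalization $(2\pi)^d\|\varphi\|_{L^2}^2=1$ combined with \eqref{Ad21} gives the correct reproducing identity, and the domination $|V_\varphi u(z)\,\overline{V_\varphi\theta(z)}|\leq cC_\mu e^{-\omega(z)}\in L^1(\R^{2d})$ (valid by Theorems~\ref{th24GZ}, \ref{th27GZ} and condition $(\gamma)$) makes the tail integral vanish. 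The trade-off: the paper's argument is shorter but nonconstructive and leans on reflexivity; yours is constructive and directly yields \emph{sequential} weak${}^*$ density, which is in fact the form in which the proposition is later invoked (Corollary~\ref{corkernel} needs a sequence $u_n\to u$ in $\Sch'_\omega$ with the accompanying equicontinuity bound \eqref{Ad12}, and your explicit $u_n$ satisfy this by Banach--Steinhaus). One small remark to make your statement match the paper's unqualified one: since $\Sch_\omega(\R^d)$ is a convex subset and, by reflexivity, the weak${}^*$ topology $\sigma(\Sch'_\omega,\Sch_\omega)$ is the weak topology of the strong dual, weak${}^*$ density of the subspace automatically upgrades to density in the strong topology of $\Sch'_\omega(\R^d)$, so your weak${}^*$ argument does prove the full claim.
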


\begin{proof}
Let us consider the inclusion
\beqsn
i:\  \Sch_\omega(\R^d)&&\hookrightarrow \Sch'_\omega(\R^d)\\
f&&\mapsto\langle i(f),\varphi\rangle:=\int_{\R^d}f(x)\overline{\varphi(x)}dx
\quad\forall\varphi\in\Sch_\omega(\R^d).
\eeqsn

To show that the image is dense we take $T\in\left(\Sch'_\omega(\R^d)\right)'$ such that
$\left.T\right|_{\Sch_\omega}=0$ and prove that $T\equiv0$.

Since $\Sch_\omega(\R^d)$ is reflexive, there exists a unique
$f\in\Sch_\omega(\R^d)$ such that
\beqsn
T(\varphi)=\int_{\R^d}f(x)\overline{\varphi(x)}dx=0,
\qquad\forall\varphi\in\Sch_\omega(\R^d),
\eeqsn
because of $\left.T\right|_{\Sch_\omega}=0$.
Therefore $f=0$, i.e. $T\equiv0$.
\end{proof}

\begin{Prop}
\label{propcontS'}
Let $\varphi\in\Sch_\omega(\R^d)\setminus\{0\}$. Then
\beqsn
V_\varphi:\ \Sch'_\omega(\R^d)\longrightarrow\Sch'_\omega(\R^{2d})
\eeqsn
is continuous.
\end{Prop}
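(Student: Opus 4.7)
The plan is to realize $V_\varphi$ on $\Sch'_\omega(\R^d)$ as the transpose of the continuous map $V_\varphi^*\colon \Sch_\omega(\R^{2d}) \to \Sch_\omega(\R^d)$ recorded in \eqref{Eq1}; continuity of the transpose of a continuous linear map between locally convex spaces (for both the weak-$*$ and the strong dual topologies) is a standard fact, so once the two natural candidates for $V_\varphi f$ are identified with each other, the conclusion follows.

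First I would verify the compatibility of the two descriptions. By Theorem~\ref{th24GZ}, for $f \in \Sch'_\omega(\R^d)$ the map $z \mapsto V_\varphi f(z) = \langle f, \Pi(z)\varphi\rangle$ is continuous with at most exponential growth, hence defines an $\omega$-tempered distribution on $\R^{2d}$ via integration against test functions. On the other hand one can set
\beqsn
\langle V_\varphi f, g\rangle := \langle f, V_\varphi^* g\rangle, \qquad g \in \Sch_\omega(\R^{2d}),
\eeqsn
which is a priori well defined in $\Sch'_\omega(\R^{2d})$ because $V_\varphi^*$ is continuous $\Sch_\omega(\R^{2d}) \to \Sch_\omega(\R^d)$ and $f$ is a continuous functional. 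For $f \in \Sch_\omega(\R^d)$, Fubini's theorem identifies these two recipes: the rapid decay of $V_\varphi^* g \in \Sch_\omega(\R^d)$ combined with the estimate of Theorem~\ref{th24GZ} on $V_\varphi f$ legitimises the interchange of order of integration. The identification for general $f \in \Sch'_\omega(\R^d)$ then either follows by density via Proposition~\ref{propdensity}, or one simply adopts the duality formula above as the definition.

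With this in hand, the continuity of $V_\varphi\colon \Sch'_\omega(\R^d) \to \Sch'_\omega(\R^{2d})$ follows by transposition: given a bounded set $B \subset \Sch_\omega(\R^{2d})$, its image $V_\varphi^*(B) \subset \Sch_\omega(\R^d)$ is bounded, so
\beqsn
\sup_{g \in B} |\langle V_\varphi f, g\rangle| = \sup_{g \in B} |\langle f, V_\varphi^* g\rangle| \leq \sup_{h \in V_\varphi^*(B)} |\langle f, h\rangle|,
\eeqsn
which bounds a strong seminorm of $V_\varphi f$ by a strong seminorm of $f$; weak-$*$ continuity is even more direct, since $f_\alpha \to f$ yields $\langle V_\varphi f_\alpha, g\rangle = \langle f_\alpha, V_\varphi^* g\rangle \to \langle V_\varphi f, g\rangle$ pointwise in $g$. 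The only genuinely delicate point in this plan is the Fubini-style compatibility check of the first paragraph, but the growth/decay balance needed to run it is already in place from Theorem~\ref{th24GZ} and \eqref{Eq1}.
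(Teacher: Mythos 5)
Your proposal is correct and follows essentially the same route as the paper: both realize $V_\varphi$ on $\Sch'_\omega(\R^d)$ as the transpose $(V_\varphi^*)^*$ of the continuous map $V_\varphi^*:\ \Sch_\omega(\R^{2d})\to\Sch_\omega(\R^d)$ from \eqref{Eq1}, verify the agreement $\langle (V_\varphi^*)^*f,g\rangle=\langle f,V_\varphi^*g\rangle=\langle V_\varphi f,g\rangle$ for $f,g$ in the test space, and conclude via the density of $\Sch_\omega(\R^d)$ in $\Sch'_\omega(\R^d)$ (Proposition~\ref{propdensity}). Your extra Fubini compatibility check and the explicit bounded-set seminorm estimate merely spell out steps the paper leaves implicit in the standard fact that transposes of continuous maps are continuous.
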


\begin{proof}
We already know that
\beqsn
V_\varphi^*:\ \Sch_\omega(\R^{2d})\longrightarrow\Sch_\omega(\R^{d})
\eeqsn
is continuous by \eqref{Eq1}. It follows that
\beqsn
(V_\varphi^*)^*:\ \Sch'_\omega(\R^d)\longrightarrow\Sch'_\omega(\R^{2d})
\eeqsn
is continuous and moreover
$\left.(V_\varphi^*)^*\right|_{\Sch_\omega(\R^d)}=V_\varphi$ because, for
$f,g\in\Sch_\omega(\R^d)$,
\beqsn
\langle(V_\varphi^*)^* f,g\rangle=\langle f,V_\varphi^* g\rangle
=\langle V_\varphi f,g\rangle.
\eeqsn

Since $\Sch_\omega(\R^d)$ is dense in $\Sch'_\omega(\R^d)$ by
Proposition~\ref{propdensity}, we have that $(V_\varphi^*)^*$ is the
continuous extension of $V_\varphi$ to $\Sch'_\omega(\R^d)$ and, hence, $V_\varphi$ is continuous on $\Sch'_\omega(\R^d)$ also.
\end{proof}

Now, we need amplitudes $a(x,y,\xi)$, instead of symbols $a(x,\xi)$.

\begin{Def}
\label{amplitude}
Given $m\in\R$, we say that $a(x,y,\xi)\in C^{\infty}(\R^{3d})$ is an amplitude  in the space $S^{m}_{\omega}$ if for every $\lambda,\mu>0$ there is $C_{\lambda,\mu}>0$ such that
$$
|\partial_{x}^{\alpha}\partial_{y}^{\gamma}\partial_{\xi}^{\beta} a(x,y,\xi)|\le C_{\lambda,\mu} e^{\lambda \varphi^{*}(\frac{|\alpha+\gamma|}{\lambda})+\mu\varphi^{*}(\frac{|\beta|}{\mu})}e^{m\omega(\xi)},
$$
for all $(x,y,\xi)\in \R^{3d}$ and $\alpha,\beta,\gamma\in\N_{0}^{d}$.
\end{Def}

Now, proceeding in a similar way to that of Proposition 1.9 and Theorem 2.2 of \cite{FGJ}, one can prove that if $a(x,y,\xi)\in S^{m}_{\omega}$ is an amplitude as in Definition~\ref{amplitude}, the operator acting on $\Sch_{\omega}$, given by the iterated integral
$$
A(f)(x):=\int_{\R^{d}}\left(\int_{\R^{d}}e^{i\langle x-y,\xi\rangle}a(x,y,\xi)f(y)dy\right)d\xi, \quad f\in\Sch_{\omega},
$$
is well defined and continuous from $\Sch_{\omega}$ into itself. The operator $A$ is called \emph{pseudo-differential operator} of type $\omega$ with amplitude $a(x,y,\xi)$. Moreover, $A$ can be extended continuously to the dual space $\tilde{A}:\Sch'_{\omega}\to\Sch'_{\omega}$ in a standard way (see \cite[Theorem 2.5]{FGJ}). In particular, the Kohn-Nirenberg quantization defined in \eqref{32} is a pseudo-differential operator with amplitude $$a(x,y,\xi):=(2\pi)^{-d}p(x,\xi),$$
where $p(x,\xi)$ is a symbol as in Definition~\ref{def23RW}.

As a consequence of the above considerations and of the estimates of the kernel in
Proposition~\ref{prop37RW}, we obtain the following result:

\begin{Cor}
\label{corkernel}
Let $a(x,\xi)\in S^m_\omega$ a symbol as in Definition~\ref{def23RW}, $\varphi\in\Sch_\omega(\R^d)$ with $\|\varphi\|_{L^2}=1$
and $u\in\Sch'_\omega(\R^d)$.
Then, for $K(z',z)$ as in \eqref{312RW}, we have
\beqs
\label{Ad18}
V_\varphi a(x,D)u(z')=\int_{\R^{2d}}K(z',z)V_\varphi u(z)dz,
\eeqs
for all $z'\in\R^{2d}$.
\end{Cor}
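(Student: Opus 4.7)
The strategy is to reduce to Lemma~\ref{th1714NR} by a density argument, once both sides of \eqref{Ad18} are given a pointwise meaning for $u\in\Sch'_\omega(\R^d)$. The left-hand side makes sense because, as recalled in the paragraph preceding the corollary, $a(x,D)$ extends continuously to an operator $\Sch'_\omega\to\Sch'_\omega$ (via its amplitude representation and \cite{FGJ}), and then $V_\varphi(a(x,D)u)$ is the continuous function of $z'$ given by Theorem~\ref{th24GZ}. The right-hand side makes sense thanks to the Remark following Proposition~\ref{prop37RW}: the kernel estimate \eqref{313RW} combined with the growth bound \eqref{Ad7} on $V_\varphi u$ yields \eqref{Ad17}, which is absolutely integrable in $z$ once $\lambda$ is chosen large enough relative to $m$ and to the growth exponent $\tilde\lambda$ coming from \eqref{Ad7}.

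Fix $z'\in\R^{2d}$ and, using Proposition~\ref{propdensity}, choose a sequence $u_n\in\Sch_\omega(\R^d)$ with $u_n\to u$ in $\Sch'_\omega(\R^d)$. Lemma~\ref{th1714NR} applied to each $u_n$ gives
\beqsn
V_\varphi(a(x,D)u_n)(z')=\int_{\R^{2d}}K(z',z)V_\varphi u_n(z)\,dz,
\eeqsn
and the plan is to pass to the limit on both sides. The left-hand side equals $\langle a(x,D)u_n,\Pi(z')\varphi\rangle$; since $\Pi(z')\varphi\in\Sch_\omega$ and $a(x,D)$ is continuous on $\Sch'_\omega$, this converges to $\langle a(x,D)u,\Pi(z')\varphi\rangle=V_\varphi(a(x,D)u)(z')$.

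For the right-hand side one has the pointwise convergence $V_\varphi u_n(z)\to V_\varphi u(z)$ for each $z$, by testing against $\Pi(z)\varphi\in\Sch_\omega$. The main obstacle is to produce a majorant for $|V_\varphi u_n(z)|$ that is independent of $n$, so that dominated convergence applies. Since $\Sch_\omega(\R^d)$ is a Fr\'echet (hence barrelled) space, the Banach--Steinhaus theorem implies that $\{u_n\}$ is equicontinuous on $\Sch_\omega$. Combining this equicontinuity with the seminorms $\|V_\varphi\cdot\|_{\omega,\lambda}$ of Proposition~\ref{cor1126G}, and using the fact that $V_\varphi(\Pi(z)\varphi)(w)$ is (up to a phase) $V_\varphi\varphi(w-z)$, which decays faster than any exponential of $\omega(w-z)$ by Theorem~\ref{th27GZ}, one obtains by subadditivity of $\omega$ a uniform bound $|V_\varphi u_n(z)|\le C e^{\lambda\omega(z)}$ for some $C,\lambda>0$ independent of $n$. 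Together with \eqref{313RW} taken with a decay rate sufficiently larger than $\lambda+m$, this yields an integrable majorant of $K(z',z)V_\varphi u_n(z)$ in $z$, and Lebesgue's dominated convergence theorem completes the passage to the limit.
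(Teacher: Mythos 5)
Your proof is correct and follows essentially the same route as the paper's: approximate $u$ by $u_n\in\Sch_\omega(\R^d)$ via Proposition~\ref{propdensity}, apply Lemma~\ref{th1714NR} to each $u_n$, and pass to the limit using Banach--Steinhaus equicontinuity of $\{u_n\}$ to obtain the uniform bound $|V_\varphi u_n(z)|\le \tilde{C}e^{\tilde\lambda\omega(z)}$ (the paper's \eqref{Ad12}) together with dominated convergence against the majorant coming from \eqref{313RW} and \eqref{Ad17}. Your only departures are cosmetic refinements: you verify the left-hand convergence pointwise by pairing with $\Pi(z')\varphi$, and you make the uniform bound explicit via the covariance $|V_\varphi(\Pi(z)\varphi)(w)|=|V_\varphi\varphi(w-z)|$ and subadditivity of $\omega$, a detail the paper delegates to the proof of Theorem~2.4 in \cite{GZ}.
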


\begin{proof}
Since
$V_\varphi$ operates on $\Sch'_\omega$, from the previous comments it is clear that $V_\varphi a(x,D)$ can be extended to $\Sch'_\omega(\R^d)$. We take $u\in\Sch'_\omega(\R^d)$. By Proposition~\ref{propdensity}, there exists a sequence
$\{u_n\}_{n\in\N}\subset\Sch_\omega(\R^d)$ which converges to $u$ in $\Sch'_{\omega}$ and, hence,
\beqs
\label{Ad15}
\int_{\R^{2d}}K(z',z)V_\varphi u_n(z)dz=V_\varphi a(x,D) u_n(z')
\longrightarrow V_\varphi a(x,D)u(z')\qquad\mbox{in}\ \Sch'_\omega(\R^{2d}).
\eeqs

We want to
prove that
\beqs
\label{Ad14}
\int_{\R^{2d}}K(z',z)V_\varphi u_n(z)dz\longrightarrow
\int_{\R^{2d}}K(z',z)V_\varphi u(z)dz
\eeqs
using Lebesgue's dominated convergence theorem. First, it is easy to see that $\{V_{\varphi}u_{n}(z)\}_{n\in\N}$ converges pointwise to $V_{\varphi}u(z)$ for every $z\in\R^{2d}$ from the definition of the short-time Fourier transform.

Now, since $\{u_n\}_{n\in\N}$ is bounded in $\Sch'_\omega(\R^d)$, it is
  equicontinuous there. So, there exist a constant $C>0$ and a seminorm $q$ on $\Sch_\omega(\R^d)$
such that
\beqsn
|\langle u_n,\varphi\rangle|\leq C q(\varphi),
\qquad\varphi\in\Sch_\omega(\R^d).
\eeqsn
This yields a uniform estimate of the inequality \eqref{Ad7} (see the proof of
 \cite[Theorem 2.4]{GZ}) in the sense:
\beqs
\label{Ad12}
|V_\varphi u_n(z)|\leq \tilde{C}e^{\tilde{\lambda}\omega(z)},
\qquad z\in\R^{2d},\  n\in\N,
\eeqs
for some $\tilde{C},\tilde{\lambda}>0$ independent of $n$ and $z$. From \eqref{Ad12} and \eqref{Ad17}
we have that $K(z',z)V_\varphi u_n(z)$ is dominated by a function in
$L^1(\R^{2d}_z)$.

Therefore \eqref{Ad14} is satisfied and hence, from \eqref{Ad15},
\beqsn
V_\varphi a(x,D)u(z')=\int_{\R^{2d}}K(z',z)V_\varphi u(z)dz
\eeqsn
also for $u\in\Sch'_\omega(\R^d)$.
\end{proof}

We recall the notion of conic support from \cite{RW}:
\begin{Def}
\label{def21RW}
For $a\in\D'(\R^{2d})$ the {\em conic support} of $a$, denoted by $\conesupp(a)$, is the
set of all $z\in\R^{2d}\setminus\{0\}$ such that any open conic set $\Gamma\subset\R^{2d}\setminus\{0\}$ containing $z$ satisfies that
\beqsn
\overline{\supp(a)\cap\Gamma}\ \mbox{is not compact in $\R^{2d}$}.
\eeqsn
\end{Def}

We have the following
\begin{Prop}
\label{cor39RW}
If $m\in\R$, $a\in S^m_\omega$ and $u\in\Sch'_\omega(\R^d)$, then
\beqsn
\WF'_\omega(a(x,D)u)\subseteq\conesupp(a).
\eeqsn
\end{Prop}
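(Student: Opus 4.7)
The plan is to show the contrapositive: if $z_0 \in \R^{2d}\setminus\{0\}$ satisfies $z_0 \notin \conesupp(a)$, then $z_0 \notin \WF'_\omega(a(x,D)u)$. By the very definition of conic support (Definition~\ref{def21RW}), there exists an open conic set $\Gamma \subseteq \R^{2d}\setminus\{0\}$ with $z_0 \in \Gamma$ such that $\supp(a)\cap\Gamma$ is relatively compact in $\R^{2d}$. Consequently, there is $R>0$ with $a(z)=0$ for every $z \in \Gamma \setminus \overline{B(0,R)}$, which is exactly the hypothesis needed to invoke the refined kernel estimate \eqref{314RW} of Proposition~\ref{prop37RW}. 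I would then fix an open conic neighbourhood $\Gamma'$ of $z_0$ satisfying $\overline{\Gamma' \cap S_{2d-1}} \subseteq \Gamma$ and take as window any $\varphi \in \Sch_\omega(\R^d)$ with $\|\varphi\|_{L^2}=1$.

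Next, I would apply Corollary~\ref{corkernel} to represent $V_\varphi (a(x,D)u)(z') = \int_{\R^{2d}} K(z',z)\, V_\varphi u(z)\, dz$ and combine two ingredients: the estimate \eqref{314RW}
\[
|K(z',z)| \leq C_\lambda e^{-\lambda\omega(y-y')} e^{-\lambda\omega(\eta-\eta')} e^{-2\lambda\omega(y')} e^{-2\lambda\omega(\eta')}, \qquad z'\in\Gamma',\ z\in\R^{2d},
\]
valid for every $\lambda>0$, together with Theorem~\ref{th24GZ}, which provides constants $c,\tilde\lambda>0$ with $|V_\varphi u(z)| \leq c\, e^{\tilde\lambda\omega(z)}$ for all $z$. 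Writing $z=(y,\eta)$, the subadditivity of $\omega$ gives $\omega(y) \leq \omega(y-y')+\omega(y')$ and similarly for $\eta$, so for $z'\in\Gamma'$ one obtains
\[
|V_\varphi (a(x,D)u)(z')| \leq C\, e^{-(2\lambda-\tilde\lambda)\omega(y')} e^{-(2\lambda-\tilde\lambda)\omega(\eta')} \int_{\R^{2d}} e^{-(\lambda-\tilde\lambda)(\omega(y-y')+\omega(\eta-\eta'))} dy\, d\eta,
\]
and the last integral is finite as soon as $\lambda - \tilde\lambda$ is large enough, since condition $(\gamma)$ of Definition~\ref{defweight} makes $e^{-s\omega(\cdot)}$ integrable on $\R^d$ for $s$ sufficiently large.

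Finally, to convert the bound on the right-hand side into decay in $\omega(z')$ itself, I use the subadditivity estimate $\omega(z') = \omega(|z'|) \leq \omega(|y'|)+\omega(|\eta'|) = \omega(y')+\omega(\eta')$. This yields, for an arbitrarily prescribed $\mu>0$ (chosen first) and a correspondingly large $\lambda$, a constant $C_\mu>0$ such that
\[
\sup_{z'\in\Gamma'} e^{\mu\omega(z')} |V_\varphi (a(x,D)u)(z')| \leq C_\mu,
\]
which, in view of Definition~\ref{def31RW}, proves $z_0 \notin \WF'_\omega(a(x,D)u)$. The main technical obstacle is the handling of the growth factor $e^{\tilde\lambda\omega(z)}$ of the STFT of $u$: it must be absorbed simultaneously by the decay in $\omega(y-y')$ and $\omega(\eta-\eta')$ coming from the kernel, while still leaving the independent factors $e^{-2\lambda\omega(y')}e^{-2\lambda\omega(\eta')}$ intact with a coefficient which grows with $\lambda$; this is precisely why \eqref{314RW} has been stated with a free parameter $\lambda$ multiplying \emph{all four} exponents and is the essential point on which the argument rests.
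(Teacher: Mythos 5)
Your proposal is correct and follows essentially the same route as the paper's proof: the contrapositive via Definition~\ref{def21RW}, the refined kernel estimate \eqref{314RW} of Proposition~\ref{prop37RW} on a conic neighbourhood $\Gamma'$ with $\overline{\Gamma'\cap S_{2d-1}}\subseteq\Gamma$, the representation \eqref{Ad18} from Corollary~\ref{corkernel}, the polynomial-type growth bound of Theorem~\ref{th24GZ} absorbed through subadditivity of $\omega$, and integrability from condition~$(\gamma)$. The only (cosmetic) difference is bookkeeping: where you take one parameter $\lambda$ large relative to both $\mu$ and $\tilde\lambda$, the paper splits it as $\lambda+N$ with $N$ chosen to make the integral converge.
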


\begin{proof}
Let $0\neq z_0\notin\conesupp(a)$.
This means that there exists an open conic set $\Gamma\subset\R^{2d}\setminus\{0\}$
containing $z_0$ and such that $a(z)=0$ for $z\in\Gamma\setminus\overline{B(0,R)}$
for some $R>0$.
Then, from Proposition~\ref{prop37RW}, for every open conic set $\Gamma'\subseteq\R^{2d}\setminus\{0\}$ with $\overline{\Gamma'\cap S_{2d-1}}\subseteq\Gamma$
we have that the kernel $K(z',z)$ defined by \eqref{312RW}
satisfies the estimate \eqref{314RW} for all $z'\in\Gamma'$ and $z\in\R^{2d}$.

We argue as in Corollary~\ref{corkernel} and use  \eqref{314RW} to obtain that formula \eqref{Ad18} holds for all $z'\in\Gamma'$
and therefore
there exist $C,\bar\lambda>0$ and for every $\lambda,N>0$ there exists
$C_{\lambda,N}>0$
such that, for all $z'\in\Gamma'$,
\beqsn
|V_\varphi(a(x,D)u)(z')|\leq&&\int_{\R^{2d}}|K(z',z)|\cdot|V_\varphi u(z)|dz\\
\leq&&C_{\lambda,N} e^{-2(\lambda+N)\omega(y')}e^{-2(\lambda+N)\omega(\eta')}\\
&&\cdot
\int_{\R^{2d}}e^{-(\lambda+N)\omega(y-y')}e^{-(\lambda+N)\omega(\eta-\eta')}
|V_\varphi u(y,\eta)|dyd\eta\\
\leq&&CC_{\lambda,N} e^{-2(\lambda+N)\omega(y')}e^{-2(\lambda+N)\omega(\eta')}\\
&&\cdot
\int_{\R^{2d}}e^{-(\lambda+N)\omega(y-y')}e^{-(\lambda+N)\omega(\eta-\eta')}
e^{\bar\lambda\omega(y,\eta)}dyd\eta.
\eeqsn

It follows, by the subadditivity of $\omega$, that
\beqs
\nonumber
|V_\varphi a(x,D)u(z')|\leq&&CC_{\lambda,N}
e^{-2(\lambda+N)\omega(y')}e^{-2(\lambda+N)\omega(\eta')}\\
\nonumber
&&\cdot
\int_{\R^{2d}}e^{-(\lambda+N)\omega(y)+(\lambda+N)\omega(y')}
e^{-(\lambda+N)\omega(\eta)+(\lambda+N)\omega(\eta')}
e^{\bar\lambda\omega(y)+\bar\lambda\omega(\eta)}dyd\eta\\
\label{39}
\leq&&CC_{\lambda,N} e^{-\lambda\omega(y')}e^{-\lambda\omega(\eta')}
\int_{\R^{2d}}e^{(\bar\lambda-N)\omega(y)}e^{(\bar\lambda-N)\omega(\eta)}
dyd\eta\\
\nonumber
\leq&&
C_\lambda e^{-\lambda\omega(y')}e^{-\lambda\omega(\eta')}
\leq C_\lambda e^{-\lambda\omega(z')}
\qquad\forall z'=(y',\eta')\in\Gamma'
\eeqs
for some $C_\lambda>0$ if we choose $N$ sufficiently large so that the
integral in \eqref{39} converges.

This proves that $z_0\notin\WF'_\omega(a(x,D)u)$ by Definition~\ref{def31RW}, and
the proof is complete.
\end{proof}

Since our weight functions are non-quasianalytic, we can obtain the following  consequence of Proposition~\ref{cor39RW}.
\begin{Cor}
Let $a\in\Sch_{\omega}(\R^{2d})$ with compact support, and consider the corresponding pseudo-differential operator $a(x,D)$, cf. \eqref{32}. Then $a(x,D)$ is globally $\omega$-regularizing, in the sense that for every $u\in\Sch^\prime_\omega(\R^d)$ we have
$a(x,D)u\in\Sch_\omega(\R^d)$.
\end{Cor}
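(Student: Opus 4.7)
The plan is to reduce the statement to the two main results already proved, namely Proposition~\ref{cor39RW} (microlocalization via cone support) and Proposition~\ref{WFSomega} (emptiness of the wave front set characterizes $\Sch_\omega$).

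First I would verify that $a$ lies in the symbol class $S^m_\omega$ of Definition~\ref{def23RW} for some (in fact every) $m\in\R$. Since $a\in\Sch_\omega(\R^{2d})$, the seminorms \eqref{seminorms} (equivalently, \eqref{new-seminorms} adapted to $\R^{2d}$) give, for all $\lambda,\mu>0$,
\beqsn
|\partial_x^\alpha\partial_\xi^\beta a(x,\xi)|\le C_{\lambda,\mu}\,
e^{\lambda\varphi^*_\omega(|\alpha|/\lambda)}\,e^{\mu\varphi^*_\omega(|\beta|/\mu)}\,e^{-\omega(x,\xi)},
\eeqsn
and dropping the decay factor $e^{-\omega(x,\xi)}$ shows $a\in S^m_\omega$ for any $m\in\R$. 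Hence the pseudo-differential operator $a(x,D)$ is well defined as a map $\Sch'_\omega(\R^d)\to\Sch'_\omega(\R^d)$ by the extension recalled after Definition~\ref{amplitude}, so that $a(x,D)u$ makes sense for $u\in\Sch'_\omega(\R^d)$.

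Next, I would check that the cone support of $a$ is empty. Since $\supp(a)$ is compact, for any $z_0\in\R^{2d}\setminus\{0\}$ and any open conic $\Gamma$ containing $z_0$, the set $\supp(a)\cap\Gamma$ is a closed subset of the compact set $\supp(a)$, hence its closure in $\R^{2d}$ is compact. According to Definition~\ref{def21RW}, this means $z_0\notin\conesupp(a)$, so $\conesupp(a)=\emptyset$.

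Applying Proposition~\ref{cor39RW} then yields
\beqsn
\WF'_\omega(a(x,D)u)\subseteq\conesupp(a)=\emptyset,
\eeqsn
and Proposition~\ref{WFSomega} finally gives $a(x,D)u\in\Sch_\omega(\R^d)$, which is the desired conclusion. There is essentially no obstacle, as the heavy lifting was done in Proposition~\ref{prop37RW} and Proposition~\ref{cor39RW}; the only small point to observe is that non-quasianalyticity of $\omega$ is precisely what guarantees the existence of non-trivial compactly supported symbols in $\Sch_\omega(\R^{2d})$, so that the statement is not vacuous.
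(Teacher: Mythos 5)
Your proposal is correct and follows essentially the same route as the paper's proof: check that $a$ belongs to the symbol class of Definition~\ref{def23RW} (the paper just uses $S^0_\omega$, which suffices), note that compactness of $\supp(a)$ forces $\conesupp(a)=\emptyset$, and then combine Proposition~\ref{cor39RW} with Proposition~\ref{WFSomega}. The only difference is that you write out the symbol-class verification and the cone-support computation, which the paper dismisses as ``easy to see''; your closing remark on non-quasianalyticity guaranteeing non-trivial compactly supported elements of $\Sch_\omega(\R^{2d})$ matches the paper's own comment preceding the corollary.
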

\begin{proof}
It is easy to see that $a\in S^0_\omega$. Consequently, the corresponding pseudo-differential operator $a(x,D)$ can be extended to $\Sch^\prime_\omega(\R^d)$.  Since the support of $a$ is compact, we have that $\conesupp (a)=\emptyset$. From Proposition~\ref{cor39RW} we get
$\WF'_\omega (a(x,D)u)=\emptyset.$ We apply  Proposition~\ref{WFSomega} to conclude.
\end{proof}

In the next part of the section we consider other kind of operators, proving that they do not enlarge the wave front set. We start from the operators with polynomial coefficients.
\begin{Th}
\label{WFpolynom}
Let $m>0$ be an integer, and consider
\beqsn
A(x,D)= \sum_{\vert\alpha+\beta\vert\leq m} c_{\alpha\beta} x^\alpha D^\beta_x,
\eeqsn
where $c_{\alpha\beta}\in\C$. Then for every $u\in\Sch_\omega^\prime(\R^d)$ we have
\beqsn
\WF'_\omega (A(x,D)u)\subseteq \WF'_\omega(u).
\eeqsn
\end{Th}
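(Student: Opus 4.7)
The strategy is to reduce the statement, via linearity, to proving that both multiplication by $x_j$ and differentiation $D_j$ do not enlarge $\WF'_\omega(u)$, and then iterate. The key technical ingredient will be a pair of commutator-type identities for the STFT. Namely, a direct computation using $\Pi(z)\varphi(y)=e^{i\langle y,\xi\rangle}\varphi(y-x)$ and the definition $V_\varphi f(z)=\langle f,\Pi(z)\varphi\rangle$ yields, for $z=(x,\xi)\in\R^{2d}$ and $u\in\Sch'_\omega(\R^d)$,
\begin{equation*}
V_\varphi(x_j u)(z)=V_{y_j\varphi}u(z)+x_j\,V_\varphi u(z),\qquad V_\varphi(D_j u)(z)=V_{D_j\varphi}u(z)+\xi_j\,V_\varphi u(z),
\end{equation*}
where $y_j\varphi$ denotes the map $y\mapsto y_j\varphi(y)$. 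Since $\Sch_\omega(\R^d)$ is closed under multiplication by coordinates and under $D_j$, the new windows $y_j\varphi,\,D_j\varphi$ still lie in $\Sch_\omega(\R^d)\setminus\{0\}$ (for a suitable choice of $\varphi$).

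Next, fix $z_0\notin\WF'_\omega(u)$ and choose an open conic set $\Gamma\ni z_0$ on which \eqref{16} holds for some $\varphi\in\Sch_\omega(\R^d)\setminus\{0\}$. Pick a slightly smaller open conic set $\Gamma'\ni z_0$ with $\overline{\Gamma'\cap S_{2d-1}}\subseteq\Gamma$. By Proposition~\ref{cor33RW} the rapid decay \eqref{16} holds on $\Gamma'$ for \emph{every} window in $\Sch_\omega(\R^d)\setminus\{0\}$, in particular for $\varphi$, $y_j\varphi$ and $D_j\varphi$. Combining with the commutator identities and the obvious bound $|x_j|,|\xi_j|\leq|z|$, we obtain for all $\lambda>0$
\begin{equation*}
\sup_{z\in\Gamma'}e^{\lambda\omega(z)}\bigl(|V_\varphi(x_ju)(z)|+|V_\varphi(D_ju)(z)|\bigr)\leq C_\lambda+\sup_{z\in\Gamma'}(1+|z|)\,e^{\lambda\omega(z)}|V_\varphi u(z)|.
\end{equation*}
The polynomial factor $(1+|z|)$ is harmless by condition $(\gamma)$ of Definition~\ref{defweight}: $\log t=o(\omega(t))$ implies $(1+|z|)\leq C_\varepsilon e^{\varepsilon\omega(z)}$ for every $\varepsilon>0$, so replacing $\lambda$ by $\lambda+\varepsilon$ in the original estimate absorbs the factor. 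This proves $z_0\notin\WF'_\omega(x_j u)$ and $z_0\notin\WF'_\omega(D_j u)$.

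Iterating this procedure on the finite composition $x^\alpha D^\beta$ (at each step shrinking the cone by a negligible amount, or, more cleanly, fixing the inner cone $\Gamma'$ from the beginning and using Proposition~\ref{cor33RW} applied to the finitely many windows obtained by repeated application of $y\mapsto y_j$ and $D_j$ to $\varphi$), one shows by induction on $|\alpha|+|\beta|$ that
\begin{equation*}
V_\varphi(x^\alpha D^\beta u)(z)=\sum_{i}p_i(z)\,V_{\varphi_i}u(z),
\end{equation*}
where each $p_i$ is a polynomial in $z$ and each $\varphi_i\in\Sch_\omega(\R^d)\setminus\{0\}$. Again using Proposition~\ref{cor33RW} and $(\gamma)$, we conclude $z_0\notin\WF'_\omega(x^\alpha D^\beta u)$. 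Finally, since the wave front set obeys $\WF'_\omega(u+v)\subseteq\WF'_\omega(u)\cup\WF'_\omega(v)$ (immediate from Definition~\ref{def31RW}) and $A(x,D)u$ is a finite linear combination of $x^\alpha D^\beta u$, we obtain the desired inclusion.

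The main obstacle, if any, is purely bookkeeping: carefully tracking which windows appear at each iteration and ensuring that the open conic set $\Gamma'$ chosen at the outset is independent of those windows. This is handled by freezing $\Gamma'$ before the induction and invoking Proposition~\ref{cor33RW} uniformly for all the windows produced by the algebraic recursion, which are finite in number and all belong to $\Sch_\omega(\R^d)\setminus\{0\}$.
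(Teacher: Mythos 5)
Your proposal is correct and takes essentially the same approach as the paper: the paper proves exactly your commutator identities in closed binomial form (formulas \eqref{59}--\eqref{62}, with the modified windows $D^\mu\varphi_\nu\in\Sch_\omega(\R^d)$), then fixes one shrunken cone $\Gamma'$ with $\overline{\Gamma'\cap S_{2d-1}}\subseteq\Gamma$, applies Proposition~\ref{cor33RW} uniformly to the finitely many windows, and absorbs the polynomial factors of degree at most $m$ via property $(\gamma)$, precisely as you do. The only cosmetic differences are that you iterate the one-step identities rather than writing the full expansion at once, and your hedge ``for a suitable choice of $\varphi$'' is unnecessary: any window that happens to vanish identically contributes a zero term, so no condition on $\varphi$ is needed.
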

\begin{proof}
We fix a window function $\varphi\in\Sch_\omega(\R^d)$, and, for $\nu\in\N^d_0$ we write $\varphi_\nu$ for the function
\beqsn
\varphi_\nu(x)=x^\nu\varphi(x).
\eeqsn
For every $\alpha\in\N^d_0$ and $z=(y,\eta)\in\R^{2d}$ we obtain by induction on $\vert\alpha\vert$ that
\beqs
\label{59}
x^\alpha \Pi(z)\varphi=\sum_{\nu\leq\alpha}\binom{\alpha}{\nu} y^{\alpha-\nu}\Pi(z)\varphi_\nu.
\eeqs
We have indeed that for $\vert\alpha\vert=1$, writing $\boldsymbol{1}_j$ for the multi-index in $\N^d_0$ having $1$ in the $j$-th position and $0$ elsewhere, we have
\beqsn
x_j\Pi(z)\varphi = y_j\Pi(z)\varphi + \Pi(z)\varphi_{\boldsymbol{1}_j};
\eeqsn
we suppose now that \eqref{59} is true for every $\vert\alpha\vert=n$, and prove it for $\tilde{\alpha}$ with $\vert\tilde{\alpha}\vert=n+1$. There exists $j\in\{1,\dots,d\}$ such that $\tilde{\alpha}=\alpha+\boldsymbol{1}_j$. Then by the inductive hypothesis we have
\beqsn
x^{\tilde{\alpha}} \Pi(z)\varphi &=& x_j\sum_{\nu\leq\alpha}\binom{\alpha}{\nu} y^{\alpha-\nu}\Pi(z) \varphi_\nu \\
&=&\sum_{\nu\leq\alpha}\binom{\alpha}{\nu} \left[ y^{\alpha-\nu+\boldsymbol{1}_j}\Pi(z)\varphi_\nu + y^{\alpha-\nu}\Pi(z) \varphi_{\nu+\boldsymbol{1}_j}\right] \\
&=& y^{\tilde{\alpha}}\Pi(z)\varphi + \Pi(z)\varphi_{\tilde{\alpha}} +\sum_{\substack{\nu\leq\alpha \\ \nu\neq 0}}\left[ \binom{\alpha}{\nu}+\binom{\alpha}{\nu-\boldsymbol{1}_j}\right] y^{\tilde{\alpha}-\nu}\Pi(z) \varphi_\nu \\
&=& \sum_{\nu\leq\tilde{\alpha}} \binom{\tilde{\alpha}}{\nu} y^{\tilde{\alpha}-\nu} \Pi(z)\varphi_\nu,
\eeqsn
and so \eqref{59} is proved. From the definition of short-time Fourier transform we have
\beqsn
V_\varphi(x^\alpha u)(z) = \langle x^\alpha u,\Pi(z)\varphi\rangle = \langle u,x^\alpha \Pi(z)\varphi\rangle
\eeqsn
and so by \eqref{59} we get
\beqs
\label{60}
V_\varphi(x^\alpha u)(z) = \sum_{\nu\leq\alpha} \binom{\alpha}{\nu} y^{\alpha-\nu} V_{\varphi_\nu} u(z).
\eeqs
Concerning derivation, since
\beqsn
V_\varphi(D^\beta u)(z) = \langle D^\beta u,\Pi(z)\varphi\rangle = \langle u,D^\beta(\Pi(z)\varphi)\rangle
\eeqsn
a direct computation shows that
\beqs
\label{61}
V_\varphi(D^\beta u)(z) = \sum_{\mu\leq\beta}\binom{\beta}{\mu} \eta^{\beta-\mu} V_{D^\mu\varphi} u.
\eeqs
From \eqref{60} and \eqref{61} we finally obtain
\beqs
V_\varphi(A(x,D)u)(y,\eta) &=& \sum_{\vert\alpha+\beta\vert\leq m} c_{\alpha\beta} V_\varphi (x^\alpha D^\beta_x u)(y,\eta) \notag\\
&=& \sum_{\vert\alpha+\beta\vert\leq m} \sum_{\substack{\nu\leq\alpha \\ \mu\leq\beta}} c_{\alpha\beta} \binom{\alpha}{\nu}\binom{\beta}{\mu} y^{\alpha-\nu} \eta^{\beta-\mu} V_{D^\mu \varphi_\nu} u(y,\eta).
\label{62}
\eeqs
On the other hand, it is not difficult to see  that for every $\mu,\nu\in\N^d_0$, $D^\mu\varphi_\nu\in\Sch_\omega(\R^d)$.

Suppose now that $z_0=(y_0,\eta_0)\notin \WF'_\omega(u)$, $z_0\in\R^{2d}\setminus\{0\}$. Then, there exists an open conic set $\Gamma\subseteq\R^{2d}\setminus\{0\}$ containing $z_0$ and such that
\beqsn
\sup_{z\in\Gamma} e^{\lambda\omega(z)}\vert V_\varphi u(z)\vert<+\infty, \qquad \lambda>0.
\eeqsn
From Proposition~\ref{cor33RW} we have that for every $\mu,\nu\in\N^d_0$ and for every open conic set $\Gamma^\prime\subseteq\R^{2d}\setminus\{0\}$ containing $z_0$ and such that $\overline{\Gamma^\prime\cap S_{2d-1}}\subseteq\Gamma$,
\beqs
\label{63}
\sup_{z\in\Gamma^\prime} e^{\lambda\omega(z)}\vert V_{D^\mu \varphi_\nu} u(z)\vert<+\infty \qquad \forall\lambda>0.
\eeqs
From \eqref{62}, for every $k>0$ we get
\beqsn
e^{\lambda\omega(z)}\vert V_\varphi (A(x,D)u)(z)\vert\leq \sum_{\vert\alpha+\beta\vert\leq m} \sum_{\substack{\nu\leq\alpha \\ \mu\leq\beta}} c_{\alpha\beta} \binom{\alpha}{\nu}\binom{\beta}{\mu} e^{-k\omega(z)} \vert y^{\alpha-\nu} \eta^{\beta-\mu}\vert e^{(\lambda+k)\omega(z)} \vert V_{D^\mu \varphi_\nu} u(z)\vert.
\eeqsn
Since $\vert\alpha-\nu\vert+\vert\beta-\mu\vert\leq m$, from the property $(\gamma)$ of the weight function $\omega$ we obtain
\beqsn
\sup_{z\in\R^{2d}} e^{-k\omega(z)} \vert y^{\alpha-\nu} \eta^{\beta-\mu}\vert <+\infty,
\eeqsn
for every $\nu\leq\alpha$, $\mu\leq\beta$. Therefore,  from \eqref{63} we obtain
\beqsn
\sup_{z\in\Gamma^\prime}e^{\lambda\omega(z)}\vert V_\varphi (A(x,D)u)(z)\vert<+\infty, \qquad\lambda>0,
\eeqsn
which means that $z_0\notin \WF'_\omega(A(x,D)u)$, and  the proof is complete.
\end{proof}

We now want to prove an analogue of Theorem~\ref{WFpolynom} for the case of localization operators. We recall here the definition of such operators and prove some results that are needed for our purpose. Given two window functions $\psi,\gamma\in\Sch_\omega(\R^d)\setminus\{0\}$ and a symbol $a\in\Sch^\prime_\omega(\R^{2d})$, the corresponding localization operator $L^a_{\psi,\gamma}$ is defined, for $f\in\Sch_\omega(\R^d)$, as
\beqs
\label{64}
L^a_{\psi,\gamma} f =V^*_\gamma(a \cdot V_\psi f).
\eeqs
From Proposition~\ref{propcontS} we have that
\beqsn
L^a_{\psi,\gamma}:\Sch_\omega(\R^d) \rightarrow \Sch^\prime_\omega(\R^d).
\eeqsn
We want now to consider symbols in a smaller class than $\Sch^\prime_\omega(\R^{2d})$, in order to apply the corresponding localization operator to distributions. We have the following result.
\begin{Lemma}
\label{contlocaliz}
Let $a(z)$, $z\in\R^{2d}$, be a measurable function such that there exist $\tau,C>0$ such that
\beqs
\label{65}
\vert a(z)\vert\leq C e^{\tau\omega(z)} \qquad\forall z\in\R^{2d}.
\eeqs
Then
\beqs
\label{66}
L^a_{\psi,\gamma}:\Sch_\omega(\R^d) \rightarrow \Sch_\omega(\R^d)
\eeqs
and
\beqs
\label{67}
L^a_{\psi,\gamma}:\Sch^\prime_\omega(\R^d) \rightarrow \Sch^\prime_\omega(\R^d)
\eeqs
are continuous.
\end{Lemma}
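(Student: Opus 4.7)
The plan is to establish first that $L^a_{\psi,\gamma}$ maps $\Sch_\omega(\R^d)$ continuously into itself, and then to deduce the dual statement by a standard adjoint identification. Fix $f\in\Sch_\omega(\R^d)$. By Theorem~\ref{th27GZ} applied with window $\psi$, for every $\mu>0$ there exists $C_\mu>0$ with $|V_\psi f(z)|\le C_\mu e^{-\mu\omega(z)}$. Combined with the hypothesis \eqref{65}, this gives, for every $\mu>0$,
\[
|a(z)V_\psi f(z)|\le C\,C_{\mu+\tau}\,e^{-\mu\omega(z)},\qquad z\in\R^{2d},
\]
so $a\cdot V_\psi f$ satisfies the hypothesis of Proposition~\ref{prop26GZ}, and therefore $L^a_{\psi,\gamma}f=V^*_\gamma(a\,V_\psi f)\in\Sch_\omega(\R^d)$.

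For the continuity in \eqref{66}, I would use the equivalent system of seminorms provided by Proposition~\ref{cor1126G}. Fix an auxiliary window $\varphi\in\Sch_\omega(\R^d)\setminus\{0\}$. A direct unravelling of $V_\varphi(L^a_{\psi,\gamma}f)(w)=\langle V^*_\gamma(aV_\psi f),\Pi(w)\varphi\rangle$, together with the identity $|\langle\Pi(z)\gamma,\Pi(w)\varphi\rangle|=|V_\varphi\gamma(w-z)|$ (cf.\ \eqref{28}), yields
\[
|V_\varphi(L^a_{\psi,\gamma}f)(w)|\le\bigl(|a\,V_\psi f|*|V_\varphi\gamma|\bigr)(w).
\]
Splitting $e^{\lambda\omega(w)}\le e^{\lambda\omega(z)}e^{\lambda\omega(w-z)}$ by subadditivity, absorbing $e^{(\lambda+\tau)\omega(z)}$ into the rapid decay of $V_\psi f$ and $e^{\lambda\omega(w-z)}$ into that of $V_\varphi\gamma$ (which belongs to $\Sch_\omega(\R^{2d})$ by Proposition~\ref{propcontS}), one arrives at an estimate of the shape
\[
e^{\lambda\omega(w)}|V_\varphi(L^a_{\psi,\gamma}f)(w)|\le C_\lambda\, \|V_\psi f\|_{\omega,\lambda+\tau+1}\int_{\R^{2d}}e^{-\omega(z)}\,dz,
\]
the last integral being finite by condition $(\gamma)$ of Definition~\ref{defweight}. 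Together with Proposition~\ref{cor1126G}, this yields the continuity \eqref{66}.

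For the dual statement \eqref{67}, the plan is to identify $L^a_{\psi,\gamma}$ as the transpose of $L^{\bar a}_{\gamma,\psi}$. Given $u\in\Sch'_\omega(\R^d)$, Theorem~\ref{th24GZ} provides $|V_\psi u(z)|\le c\,e^{\lambda_0\omega(z)}$ for some $c,\lambda_0>0$, so $a\,V_\psi u$ satisfies the growth estimate \eqref{9}, and Proposition~\ref{cor1127G}(a) guarantees that $V^*_\gamma(a\,V_\psi u)$ defines an element of $\Sch'_\omega(\R^d)$; this serves as the extension of $L^a_{\psi,\gamma}$ to $\Sch'_\omega(\R^d)$, consistent with \eqref{64} on $\Sch_\omega(\R^d)$. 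A Fubini computation, justified by the rapid decay of $V_\gamma g$ for $g\in\Sch_\omega(\R^d)$, then yields the adjoint identity
\[
\langle L^a_{\psi,\gamma}u,g\rangle=\int_{\R^{2d}}a(z)V_\psi u(z)\,\overline{V_\gamma g(z)}\,dz=\langle u,L^{\bar a}_{\gamma,\psi}g\rangle,\qquad g\in\Sch_\omega(\R^d).
\]
Since $\bar a$ also satisfies \eqref{65} with the same constants, the first part of the lemma applied to $L^{\bar a}_{\gamma,\psi}$ gives its continuity on $\Sch_\omega(\R^d)$, and hence $L^a_{\psi,\gamma}$ is continuous on $\Sch'_\omega(\R^d)$ by transposition.

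I expect the main technical point to be the convolution estimate in the second paragraph: one must absorb the $\tau$-exponential growth of $a$ into the rapid decay of $V_\psi f$ and of $V_\varphi\gamma$ while keeping the variables $z$ and $w-z$ separated via subadditivity of $\omega$, and verify that the residual integral $\int e^{-\omega(z)}\,dz$ converges. Once this is in place, the identification of the adjoint and the transposition argument are routine, requiring only care with the conjugates in the sesquilinear pairings.
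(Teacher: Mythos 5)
Your proposal is correct, and it reaches the same two conclusions by the same overall skeleton as the paper (estimate $a\cdot V_\psi f$, apply boundedness of $V^*_\gamma$, then dualize via the identity $L^a_{\psi,\gamma}=(L^{\overline a}_{\gamma,\psi})^*$), but the technical route differs: the paper channels everything through the modulation spaces $\boldsymbol{M}^\infty_{m_\rho}$, using Proposition~\ref{prop1132G} and the characterizations $\Sch_\omega=\bigcap_{\lambda>0}\boldsymbol{M}^\infty_{m_\lambda}$, $\Sch'_\omega=\bigcup_{\lambda<0}\boldsymbol{M}^\infty_{m_\lambda}$ of Remark~\ref{form25RW}, both for membership ($L^a_{\psi,\gamma}f\in\boldsymbol{M}^\infty_{m_\rho}$ for all $\rho>0$, resp.\ for one $\rho$) and for the continuity estimate via \eqref{68}; you instead bypass the modulation-space scale entirely and use only the elementary STFT facts of Section~2 --- Proposition~\ref{prop26GZ} for $L^a_{\psi,\gamma}f\in\Sch_\omega$, Theorem~\ref{th24GZ} together with Proposition~\ref{cor1127G}(a) for the distributional extension, and a hand-derived convolution bound $|V_\varphi(L^a_{\psi,\gamma}f)|\le|a\,V_\psi f|*|V_\varphi\gamma|$ for continuity. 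That convolution bound is exactly the inequality established inside the paper's proof of Proposition~\ref{prop1132G} (i.e.\ \eqref{68} with $p=q=\infty$), so your continuity computation is essentially the paper's in disguise; what your route buys is self-containedness (no modulation-space machinery is needed for this lemma), while the paper's route buys the stronger intermediate information that $L^a_{\psi,\gamma}$ shifts the whole scale $\boldsymbol{M}^\infty_{m_\rho}$ by $\tau$ in the weight parameter, which is of independent interest. Two minor points of hygiene: Proposition~\ref{cor1127G} is stated under the hypothesis $\langle\psi,\gamma\rangle\neq0$, which your windows need not satisfy, but part (a) makes no reference to $\psi$ at all (apply it with $\psi=\gamma$, say), so your citation is harmless; and in the final transposition step you should, as the paper implicitly does, note that the adjoint of a continuous map $\Sch_\omega\to\Sch_\omega$ is continuous for the strong dual topology on $\Sch'_\omega$ --- this is standard for Fr\'echet spaces and requires no further work.
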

\begin{proof}
Let $f\in\Sch_\omega(\R^d)$. From Theorem~\ref{th27GZ} we have that for every $\lambda,\rho>0$ there exists $C_\lambda>0$ such that
\beqsn
e^{\rho\omega(z)}\vert a(z)\vert \vert V_\psi f(z)\vert\leq C_\lambda e^{(\rho+\tau-\lambda)\omega(z)},
\eeqsn
and so, choosing $\lambda\geq \rho+\tau$, we have that $a\cdot V_\psi f\in L^\infty_{m_\rho}(\R^{2d})$ for every $\rho>0$, where $m_\rho$ is defined by \eqref{21bis}. From Proposition~\ref{prop1132G} and \eqref{64}, we have that $L^a_{\psi,\gamma}f\in \boldsymbol{M}^\infty_{m_\rho}(\R^d)$ for every $\rho>0$, and then, from Remark~\ref{form25RW}, $L^a_{\psi,\gamma} f\in\Sch_\omega(\R^d)$. To prove the continuity of $L^a_{\psi,\gamma}$ on $\Sch_\omega(\R^d)$ we fix $\varphi\in\Sch_\omega(\R^d)\setminus\{0\}$, $\rho>0$, and we observe that from \eqref{68} (with $p=q=\infty$) and \eqref{65} we get
\beqsn
\sup_{z\in\R^{2d}} \vert V_\varphi (L^a_{\psi,\gamma}f)(z)\vert e^{\rho\omega(z)} &=& \sup_{z\in\R^{2d}} \vert V_\varphi V^*_\gamma(a\cdot V_\psi f)\vert e^{\rho\omega(z)} \\
&\leq& C\Vert V_\varphi\gamma\Vert_{L^1_{v_\rho}}\sup_{z\in\R^{2d}} \vert a(z) V_\psi f(z)\vert e^{\rho\omega(z)} \\
&\leq& C^\prime \sup_{z\in\R^{2d}} \vert V_\psi f(z)\vert e^{(\tau+\rho)\omega(z)}.
\eeqsn
From Proposition~\ref{cor1126G} we have that \eqref{66} is continuous. \hfill\break
Let now $f\in\Sch_\omega^\prime(\R^d)$. From Remark~\ref{form25RW} there exists $\lambda<0$ such that $f\in\boldsymbol{M}^\infty_{m_\lambda}(\R^d)$; then, choosing $\rho=-\vert\tau\vert-\vert\lambda\vert$ we have
\beqsn
e^{\rho\omega(z)}\vert a(z)\vert \vert V_\psi f(z)\vert\leq C e^{(\rho+\tau-\lambda)\omega(z)}<+\infty
\eeqsn
for every $z\in\R^{2d}$, so $a\cdot V_\psi f\in L^\infty_{m_\rho}(\R^{2d})$. Then by Proposition~\ref{prop1132G} we have $L^a_{\psi,\gamma}f\in\boldsymbol{M}^\infty_{m_\rho}(\R^d)$, and from Remark~\ref{form25RW} we finally have $L^a_{\psi,\gamma}f\in\Sch^\prime_\omega(\R^d)$. Observe now that for every $u\in\Sch_\omega^\prime(\R^d)$ and $v\in\Sch_\omega(\R^d)$ we have
\beqsn
\langle L^a_{\psi,\gamma} u,v\rangle = \langle V^*_\gamma(a\cdot V_\psi u),v\rangle = \langle u,V^*_\psi(\overline{a}\cdot V_\gamma v)\rangle = \langle u,L^{\overline{a}}_{\gamma,\psi}v\rangle.
\eeqsn
Then $L^a_{\psi,\gamma}=(L^{\overline{a}}_{\gamma,\psi})^*$; since $\overline{a}$ satisfies the same estimates as $a$, the continuity of \eqref{67} follows from that of \eqref{66}.
\end{proof}

\begin{Th}
Let $\psi,\gamma\in\Sch_\omega(\R^d)\setminus\{0\}$, and let $a$ be a symbol satisfying \eqref{65}. Then for every $u\in\Sch^\prime_\omega(\R^d)$ we have
\beqsn
\WF'_\omega(L^a_{\psi,\gamma}u)\subseteq \WF'_\omega(u).
\eeqsn
\end{Th}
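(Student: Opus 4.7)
The plan is to bound the STFT of $L^a_{\psi,\gamma}u$ by a convolution-type integral against $|V_\psi u|$ and then exploit the rapid decay of $V_\psi u$ on a conical neighbourhood of $z_0$. First I would observe that, from the definition of the adjoint $V_\gamma^*$,
\[
V_\varphi(L^a_{\psi,\gamma}u)(z')=\int_{\R^{2d}}a(z)V_\psi u(z)\overline{V_\gamma(\Pi(z')\varphi)(z)}\,dz,
\]
and, using the identity $|V_\gamma(\Pi(z')\varphi)(z)|=|V_\gamma\varphi(z-z')|$ (cf.~\eqref{28}), that
\[
|V_\varphi(L^a_{\psi,\gamma}u)(z')|\leq\int_{\R^{2d}}|a(z)|\,|V_\psi u(z)|\,|V_\gamma\varphi(z-z')|\,dz.
\]
The integrability of the integrand is guaranteed by Theorem~\ref{th24GZ} and the growth assumption~\eqref{65} on $a$.

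Given $z_0\notin\WF'_\omega(u)$, Proposition~\ref{cor33RW} provides an open conic neighbourhood $\Gamma$ of $z_0$ on which $V_\psi u$ decays faster than any $e^{-\lambda\omega(\cdot)}$. I would then fix a subcone $\Gamma'\ni z_0$ with $\overline{\Gamma'\cap S_{2d-1}}\subseteq\Gamma$ and a small $\varepsilon>0$ such that $z'\in\Gamma'$, $|z'|\geq 1$ and $|z-z'|\leq\varepsilon\langle z'\rangle$ force $z\in\Gamma$, mimicking the geometric splitting in the proof of Proposition~\ref{cor33RW}. The integral above is then split into $I_1$ over $|z-z'|\leq\varepsilon\langle z'\rangle$ and $I_2$ over its complement.

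For $I_1$ I would combine $|V_\psi u(z)|\leq C_\lambda e^{-\lambda\omega(z)}$ (valid on $\Gamma$, for arbitrarily large $\lambda$), the bound $|a(z)|\leq Ce^{\tau\omega(z)}$, and the subadditivity estimate $\omega(z)\leq\omega(z')+\omega(z-z')$ to distribute the exponential factors as $e^{(\tau-\lambda)\omega(z')}e^{(\lambda-\tau)\omega(z-z')}$; the latter is then absorbed by the rapid decay of $V_\gamma\varphi\in\Sch_\omega(\R^{2d})$ supplied by Theorem~\ref{th27GZ}, yielding decay of any prescribed order in $z'$. For $I_2$ I would instead use the a priori growth bound $|V_\psi u(z)|\leq c\, e^{\bar\lambda\omega(z)}$ from Theorem~\ref{th24GZ}, again apply subadditivity to control $|a(z)||V_\psi u(z)|$, and exploit the inequality $\omega(z-z')\geq A_\varepsilon\omega(z')$ on $|z-z'|>\varepsilon\langle z'\rangle$ (obtained by iterated subadditivity, exactly as in the proof of~\eqref{19bis}); choosing the decay rate of $V_\gamma\varphi$ large enough produces arbitrary exponential decay in $z'$. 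Combining the estimates for $I_1$ and $I_2$ shows that $V_\varphi(L^a_{\psi,\gamma}u)$ decays rapidly on $\Gamma'$, so $z_0\notin\WF'_\omega(L^a_{\psi,\gamma}u)$.

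The main technical obstacle is that the growth factor $e^{\tau\omega(z)}$ from the symbol $a$, together with the a priori growth $e^{\bar\lambda\omega(z)}$ of $V_\psi u$, must be fully absorbed in the region $|z-z'|>\varepsilon\langle z'\rangle$ by the decay of $V_\gamma\varphi(z-z')$; the cone-separation estimate $\omega(z-z')\geq A_\varepsilon\omega(z')$ is precisely what makes this absorption possible, and it is the same mechanism that drives Proposition~\ref{cor33RW}. Once that delicate balance is in place, the remaining work is routine bookkeeping with subadditivity of $\omega$ and the integrability of $e^{-\mu\omega(\cdot)}$ for large $\mu$, which follows from condition $(\gamma)$ of Definition~\ref{defweight}.
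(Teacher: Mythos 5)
Your proposal is correct and follows essentially the same route as the paper: the paper likewise reduces to the convolution bound $|V_\varphi(L^a_{\psi,\gamma}u)|\le |a\cdot V_\psi u|\ast|V_\varphi\gamma|$, absorbs the factor $e^{\tau\omega(z)}$ from \eqref{65} into the arbitrary decay rate of $V_\psi u$ on $\Gamma$, and then invokes the two-region splitting from the proof of Proposition~\ref{cor33RW} exactly as you spell out. The only cosmetic difference is that you write the window factor as $|V_\gamma\varphi(z-z')|$ rather than $|V_\varphi\gamma(z'-z)|$, which is equivalent in modulus by the standard STFT symmetry.
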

\begin{proof}
Let $z_0\notin \WF'_\omega(u)$, $z_0\in\R^{2d}\setminus\{0\}$. Then there exists an open conic set $\Gamma\subseteq\R^{2d}\setminus\{0\}$ containing $z_0$ such that
\beqsn
\sup_{z\in\Gamma} e^{\lambda\omega(z)}\vert V_\psi u(z)\vert <+\infty \qquad\forall\lambda>0.
\eeqsn
From \eqref{65}, since $\lambda$ is arbitrary we have
\beqsn
\sup_{z\in\Gamma} e^{\lambda\omega(z)}\vert a(z) V_\psi u(z)\vert <+\infty \qquad\forall\lambda>0.
\eeqsn
For windows functions $\varphi,\gamma\in\Sch_\omega(\R^d)$ we can then repeat the same procedure used in the proof of Proposition~\ref{cor33RW}. First, we observe that from de definition of localization operator
$$
V_{\varphi}(L^{a}_{\psi,\gamma}u)=V_{\varphi}V^{*}_{\gamma}(a\cdot V_{\psi}u).
$$
Now, it is not difficult to see that
\beqsn
&&V_\varphi(L^a_{\psi,\gamma}u)(x,\xi)=\int_{\R^{2d}}(a\cdot V_\psi u)(s,\eta)
\overline{V_\gamma(\Pi(z)\varphi)}(s,\eta)dsd\eta,\\
&&
\overline{V_\gamma(\Pi(z)\varphi)}(s,\eta)=V_\varphi \gamma(x-s,\xi-\eta)
e^{-i\langle s,\xi-\eta\rangle},
\eeqsn
and hence
$$
|V_{\varphi}(L^{a}_{\psi,\gamma}u)|\le |a\cdot V_{\psi}u|*|V_{\varphi}\gamma|.
$$
Consequently, for every open conic set $\Gamma^\prime\subseteq\R^{2d}\setminus\{0\}$ containing $z_0$ and such that $\overline{\Gamma^\prime\cap S_{2d-1}}\subseteq\Gamma$ we have (see the proof of Proposition~\ref{cor33RW})
\beqsn
\sup_{z\in\Gamma^\prime} e^{\lambda\omega(z)} \vert V_\varphi(L^a_{\psi,\gamma}u)(z)\vert <+\infty,\qquad\lambda>0.
\eeqsn
This implies that $z_0\notin \WF'_\omega(L^a_{\psi,\gamma}u)$ and the proof is complete.
\end{proof}

\section{Examples}
\label{sec5}

In this section we compute the Gabor wave front set for some particular $u\in\Sch^\prime_\omega(\R^d)$ (see also the examples in \cite{RW}).
\begin{Ex}
\label{delta}{\rm
Consider the Dirac distribution $u=\delta\in\Sch_\omega^\prime(\R^d)$ for every weight $\omega$. We have 
that
\beqsn
V_\varphi\delta(x,\xi)=\overline{\varphi(-x)}.
\eeqsn
Since $V_\varphi\delta(0,\xi)=\overline{\varphi(0)}$, choosing $\varphi$ in such a way that $\varphi(0)\neq 0$ we have
\beqsn
\{0\}\times(\R^d\setminus\{0\})\subseteq \WF'_\omega(\delta).
\eeqsn
Let now $(x_0,\xi_0)\in\R^{2d}\setminus\{0\}$ such that $x_0\neq 0$, and consider an open conic set containing $(x_0,\xi_0)$ of the form
\beqsn
\Gamma=\{(x,\xi)\in\R^{2d}\setminus\{0\} : \vert\xi\vert < C\vert x\vert\}
\eeqsn
for $C>0$. From the subadditivity of $\omega$, there exists $C_1>0$ such that, writing $z=(x,\xi)$,
\beqsn
\sup_{z\in\Gamma} e^{\lambda\omega(z)} \vert V_\varphi \delta(z)\vert\leq \sup_{x\in\R^d} e^{\lambda C_1\omega(x)} \vert\varphi(-x)\vert <+\infty
\eeqsn
since $\varphi\in\Sch_\omega(\R^d)$. Then $(x_0,\xi_0)\notin \WF'_\omega(\delta)$, and so $\WF'_\omega(\delta)=\{0\}\times(\R^d\setminus\{0\})$. From Proposition~\ref{cor52RW} we have that for every $\overline{x}\in\R^d$, writing $\delta_{\overline{x}}$ for the Dirac distribution centered at $\overline{x}$,
\beqs
\label{69}
\WF'_\omega(\delta_{\overline{x}})=\{0\}\times(\R^d\setminus\{0\}).
\eeqs}
\end{Ex}

\begin{Ex}{\rm
Let $u=\boldsymbol{1}$ be the function identically $1$, that belong to $\Sch^\prime_\omega(\R^d)$ for every weight $\omega$. A direct computation shows that
\beqsn
V_\varphi(\boldsymbol{1})=e^{-i\langle x,\xi\rangle} \overline{\hat{\varphi}(-\xi)};
\eeqsn
since $\hat{\varphi}\in\Sch_\omega(\R^d)$ we can proceed as in Example~\ref{delta}, obtaining that for every weight $\omega$, $\WF'_\omega(\boldsymbol{1}) = (\R^d\setminus\{0\})\times\{0\}$. From Proposition~\ref{cor52RW} we then have that for every $\overline{\xi}\in\R^d$ and for every weight $\omega$,
\beqs
\label{70}
\WF'_\omega(e^{i\langle \cdot,\overline{\xi}\rangle}) = (\R^d\setminus\{0\})\times\{0\}.
\eeqs}
\end{Ex}

\begin{Ex}{\rm
We consider now the function $u(x)=e^{icx^2/2}$, for $x\in\R$ and $c\in\R\setminus\{0\}$. Observe that $u\in\Sch^\prime_\omega(\R)$ for every $\omega$. Choosing as window function the Gaussian $\varphi(t)=e^{-t^{2}/2}$, that belongs to $\Sch_\omega(\R)$ for every $\omega$, we have, as in Example 6.6 of \cite{RW}, that there exists $C>0$ such that
\beqsn
\vert V_\varphi u(x,\xi)\vert = C\exp\left( -\frac{(\xi-c x)^2}{2(1+c^2)}\right).
\eeqsn
Then, proceeding in a similar way as in the previous cases we have
\beqs
\label{71}
\WF'_\omega(u)=\{(x,cx):x\in\R\setminus\{0\}\}
\eeqs
for every weight $\omega$.}
\end{Ex}

We observe that in the cases \eqref{69} and \eqref{70} the Gabor wave front set gives rougher information since it does not take into account translations and modulations, while for the case \eqref{71} it gives finer information, since it identifies the so-called {\em instantaneous frequency}, that is the only direction along which the time-frequency content of $u$ does not decay. For a comparison of the Gabor wave front set of the element considered in the previous examples with other type of global wave front set (at least in the frame of tempered distributions) we refer to \cite{RW}.

We observe now that in the previous examples the considered distributions have the same wave front set for every weight $\omega$. In general the Gabor wave front set may depend on $\omega$, as shown in the next example.
\begin{Ex}{\rm
Let $\omega$ and $\sigma$ be two weight functions, such that $\omega(t)\leq\sigma(t)$ and $\mathcal{S}_\sigma(\R^d)\cap\D(\R^d)\subsetneq \mathcal{S}_\omega(\R^d)\cap\D(\R^d)$. We then fix a function $f\in \mathcal{S}_\omega(\R^d)$ with compact support such that $f\notin \mathcal{S}_\sigma(\R^d)$.
From Proposition~\ref{WFSomega} we have
\beqsn
\WF'_\omega(f)=\emptyset.
\eeqsn
Fix now a window $\varphi_0\in\Sch_\sigma(\R^d)$ with compact support such that $\varphi_0\equiv 1$ on $\supp(f)$. From the definition of short-time Fourier transform, we then have that the orthogonal projection on $\R^d_x$ of the support of $V_{\varphi_0}f(x,\xi)$ is compact. Let now $z_0=(x_0,\xi_0)\in\R^{2d}$ with $x_0\neq 0$, and fix an open conic set containing $z_0$ of the form
\beqsn
\Gamma=\{(x,\xi)\in\R^{2d}\setminus\{0\}:\vert\xi\vert < C\vert x\vert\},
\eeqsn
for $C>0$. We then have that $\overline{\Gamma\cap\supp(V_{\varphi_0}f)}$ is compact, so the condition \eqref{16} is satisfied for every $\lambda>0$. Then $(x_0,\xi_0)\notin\WF'_\sigma(f)$ for every $x_0\neq 0$. Consider now a point of the type $(0,\xi_0)$ with $\xi_0\neq 0$, $\xi_0\in\R^d$. From the fact that $\varphi_0\equiv 1$ on $\supp(f)$, we have
\beqsn
V_{\varphi_0}f(0,\xi)=\int e^{-i\langle t,\xi\rangle} f(t) \overline{\varphi_0 (t)}\,dt = \hat{f}(\xi).
\eeqsn
Since $f\notin \mathcal{S}_\sigma(\R^d)$, we have that there exists $\lambda>0$ such that
\beqsn
\sup_{\xi\in\R^d} e^{\lambda\sigma(\xi)} \vert V_{\varphi_0} f(0,\xi)\vert=+\infty,
\eeqsn
so \eqref{16} cannot be satisfied in an open conic set containing $(0,\xi_0)$, and then $(0,\xi_0)\in \WF'_\omega(f)$. We then have that
\beqsn
\WF'_\sigma(f)=\{0\}\times(\R^d\setminus\{0\});
\eeqsn
in particular $\WF'_\sigma(f)\neq \WF'_\omega(f)$.}
\end{Ex}


\vspace*{10mm}
{\bf Acknowledgments.}
The authors were partially supported by the INdAM-Gnampa Project 2016 ``Nuove prospettive
nell'analisi microlocale e tempo-frequenza'', by FAR 2013 (University of Ferrara) and by the project ``Ricerca Locale - Analisi di Gabor, operatori pseudodifferenziali ed equazioni differenziali'' (University of Torino). The research of the second author was partially supported by the project  MTM2016-76647-P

\end{document}